\newtheorem{theo}{Theorem}[section]
\newtheorem{coll}[theo]{Corollary}
\newtheorem{lemm}[theo]{Lemma}
\newtheorem{prop}[theo]{Proposition}
\newtheorem{defn}[theo]{Definition}
\newtheorem{ex}[theo]{Example}
\newtheorem{rem}[theo]{Remark}
\newcommand{\Hom}{{\rm Hom}}
\newcommand{\End}{{\rm End}}
\def\Im{\mbox{Im\/}}
\newcommand{\A}{\mathcal A}
\begin{document}
\sloppy

\title[CS-Baer and dual CS-Baer objects]{CS-Baer and dual CS-Baer objects \\ in abelian categories}

\author[S. Crivei]{Septimiu Crivei}
\address{Faculty of Mathematics and Computer Science, Babe\c s-Bolyai University, Str. M. Kog\u alniceanu 1,
400084 Cluj-Napoca, Romania} \email{crivei@math.ubbcluj.ro}

\author[D. Kesk{i}n T\"ut\"unc\"u]{Derya Kesk{i}n T\"ut\"unc\"u}
\address {Department of Mathematics, Hacettepe University, 06800 Beytepe, Ankara, Turkey} \email{keskin@hacettepe.edu.tr}

\author[S.M. Radu]{Simona Maria Radu}
\address{Faculty of Mathematics and Computer Science, Babe\c s-Bolyai University, Str. M. Kog\u alniceanu 1,
400084 Cluj-Napoca, Romania} \email{simonamariar@math.ubbcluj.ro}

\author[R. Tribak]{Rachid Tribak}
\address{Centre R\'{e}gional des M\'{e}tiers de l'Education et de la Formation (CRMEF-TTH)-Tanger, 
Avenue My Abdelaziz, Souani, B.P. 3117, Tangier, Morocco} \email{tribak12@yahoo.com}

\subjclass[2020]{18E10, 16D90.}

\keywords{Abelian category, (dual) CS-Baer object, (dual) Baer object, (dual) CS-Rickart object, (dual) Rickart object, extending object, lifting object, module.}

\begin{abstract} We investigate relative CS-Baer objects in abelian categories in relationship with other relevant classes of objects such as relative Baer objects, extending objects, objects having certain summand intersection properties and relative CS-Rickart objects. Dual results are automatically obtained by applying the duality principle in abelian categories. We also study direct sums of relative CS-Baer objects, and we determine the complete structure of dual self-CS-Baer modules over Dedekind domains. Further applications are given to module categories.   
\end{abstract}

\date{January 24, 2022}

\maketitle


\section{Introduction}

Let $M$ and $N$ be modules over a unitary ring. Then $N$ is called \emph{$M$-CS-Rickart} if the kernel of every homomorphism $f:M\to N$ is essential in a direct summand of $M$, and \emph{dual $M$-CS-Rickart} if the image of every homomorphism $f:M\to N$ lies above a direct summand of $N$ in the sense that ${\rm Im}(f)/L$ is superfluous in $M/L$ for some direct summand $L$ of $M$. Inspired by the work of Abyzov and Nhan on (dual) CS-Rickart modules \cite{AN1,ANQ}, we have considered and studied (dual) relative CS-Rickart objects in abelian categories \cite{CR1,CR2}, as a generalization of both extending (lifting) and (dual) relative Rickart objects \cite{CK}. The concepts of extending and lifting modules are well established in Module Theory \cite{CLVW,DHSW}. The interest in (dual) relative CS-Rickart objects stems, on one hand, from the module-theoretic work of Lee, Rizvi and Roman \cite{LRR10,LRR11}, and on the other hand, from their applications to relative regular objects in abelian categories studied by D\u asc\u alescu, N\u ast\u asescu, Tudorache and D\u au\c s \cite{DNTD}, as shown by Crivei and K\"or \cite{CK}. The setting of abelian categories has the advantage that one may freely use the duality principle in order to automatically obtain dual results, and yields applications to categories other than module categories. 
Furthermore, (dual) relative Rickart objects generalize (dual) relative Baer objects, which were studied by Rizvi and Roman \cite{RR04,RR09}, Keskin T\"ut\"unc\"u and Tribak \cite{KT} in module categories, and Crivei and K\"or \cite{CK} in abelian categories. Likewise, as a relevant subclass of (dual) relative CS-Rickart objects, in the present paper we will consider and investigate (dual) relative CS-Baer objects in abelian categories. Note that they were previously considered in module categories by Nhan \cite{Nhan}. The above notions have some strong versions, obtained by replacing direct summands by fully invariant direct summands in their definitions. E.g., see the work of Al-Saadi and Ibrahiem on (dual) strongly Rickart modules \cite{AI14,AI15}, Ebrahimi Atani, Khoramdel and Dolati Pish Hesari \cite{EKD} on strongly extending modules, Wang \cite{Wang} on strongly lifting modules, Crivei and Olteanu on (dual) strongly Rickart objects in abelian categories \cite{CO1,CO2}, Crivei and Radu on (dual) strongly CS-Rickart objects in abelian categories \cite{CR3}.  

The paper is structured as follows. We shall mainly refer to (strongly) relative CS-Baer objects in an abelian category $\mathcal{A}$ with AB3*. In Section 3 we introduce relative CS-Baer and strongly relative CS-Baer objects in abelian categories, and we illustrate them by a series of examples. We also show that an object $M$ is strongly self-CS-Baer if and only if $M$ is self-CS-Baer and weak duo if and only if $M$ is self-CS-Baer and ${\rm End}_{\A}(M)$ is abelian. 

Sections 4 and 5 are dedicated to the comparison of (strongly) relative CS-Baer objects with their most important particularizations, namely (strongly) relative Baer and (strongly) extending objects. For objects $M$ and $N$, we show that $N$ is (strongly) $M$-Baer if and only if $N$ is (strongly) $M$-CS-Baer and $M$-$\mathcal{K}$-nonsingular. In particular, every non-singular (strongly) self-CS-Baer right $R$-module is (strongly) self-Baer and every non-cosingular dual (strongly) self-CS-Baer right $R$-module is dual (strongly) self-Baer. We consider some stronger forms of $M$-$\mathcal{K}$-nonsingularity and $M$-$\mathcal{K}$-cononsingularity, called 
$\mathcal{E}$-$M$-$\mathcal{K}$-nonsingularity and $\mathcal{E}$-$M$-$\mathcal{K}$-cononsingularity respectively. 
We show that if $N$ is (strongly) $M$-CS-Baer and $\mathcal{E}$-$M$-$\mathcal{K}$-cononsingular, then $M$ is (strongly) extending. We also prove that if an object $M$ is (strongly) extending and $\mathcal{E}$-$M$-$\mathcal{K}$-nonsingular, then $M$ is (strongly) self-CS-Baer and $\mathcal{E}$-$M$-$\mathcal{K}$-cononsingular. 
We characterize dual (strongly) self-CS-Baer rings as (strongly) lifting rings, or equivalently, (abelian) semiperfect rings.

Sections 6 and 7 study the relationship between (strongly) relative CS-Baer objects and two relevant generalizations, namely objects with certain summand intersection properties called (strictly) ESSIP and (strictly) SSIP-extending, and (strongly) relative CS-Rickart objects. We prove that every (strongly) self-CS-Baer object is (strictly) ESSIP, and if $A$ and $B$ are objects such that $A\oplus B$ is (strictly) ESSIP, then $B$ is (strongly) $A$-CS-Baer. Also, if $M$ is (strongly) self-CS-Rickart and (strictly) SSIP-extending, then $M$ is (strongly) self-CS-Baer, while the converse holds if ${\rm Soc}(M)$ is an essential subobject of $M$. In particular, if $M$ is a finitely cogenerated right $R$-module or $R$ is a right semiartinian ring, then $M$ is (strongly) self-CS-Baer if and only if $M$ is (strongly) self-CS-Rickart and (strictly) SSIP-extending. Also, if $M$ is a finitely generated right $R$-module or $R$ is a right max ring, then $M$ is dual (strongly) self-CS-Baer if and only if $M$ is dual (strongly) self-CS-Rickart and (strictly) SSSP-lifting.

Section 8 studies products and coproducts of relative CS-Baer objects. The class of (strong) relative CS-Baer objects is closed under direct summands, but in general not under coproducts. Nevertheless, we show that $\bigoplus_{i=1}^n N_i$ is (strongly) $M$-CS-Baer if and only if each $N_i$ is (strongly) $M$-CS-Baer. As a consequence, if $M$ is a right $R$-module, then we deduce that $Z_2(M)$ and $M/Z_2(M)$ are (strongly) $M$-CS-Baer if and only if $Z_2(M)$ is a direct summand of $M$ and $M$ is (strongly) self-CS-Baer. If $M=\bigoplus_{i=1}^n M_i$ is a direct sum decomposition such that $\Hom_{\mathcal{A}}(M_i,M_j)=0$ for every distinct $i,j\in \{1,\dots ,n\}$, we prove that $M$ is (strongly) self-CS-Baer if and only if each $M_i$ is (strongly) self-CS-Baer. 

Finally, in Section 9 we determine the structure of dual (strongly) self-CS-Baer modules over Dedekind domains. We first  show that the problem can be reduced to the case of modules over valuation rings. Then, for a discrete valuation ring $R$ with maximal ideal $\mathfrak{m}$, quotient field $K$ and $Q=K/R$, we prove that an $R$-module $M$ is dual self-CS-Baer if and only if $M$ is isomorphic to one of the following modules: 

(1) $K^a \oplus Q^b \oplus R^c$ with $a \leq 1$ if $R$ is incomplete, or 

(2) $K^{(I_1)} \oplus Q^{(I_2)} \oplus B(n)$, or 

(3) $K^{(I_1)} \oplus B(n, n+1)$, 

\noindent where $I_1$ and $I_2$ are two index sets, $a$, $b$, $c$ and $n$ are non-negative integers, and for natural numbers $n_1$, $n_2$, $\ldots$, $n_s$, $B(n_1, n_2, \ldots, n_s)$ denotes the direct sum of arbitrarily many copies of $R/\mathfrak{m}^{n_1}$, $R/\mathfrak{m}^{n_2}$, $\ldots$, $R/\mathfrak{m}^{n_s}$.

\section{Preliminaries}

In what follows we introduce the necessary terminology and notation.

Every morphism $f:M\to N$ in an abelian category $\mathcal{A}$ has a kernel, cokernel, coimage and image, denoted by ${\rm ker}(f):{\rm Ker}(f)\to M$, ${\rm coker}(f):N\to {\rm Coker}(f)$, ${\rm coim}(f):M\to {\rm Coim}(f)$ and ${\rm im}(f):{\rm Im}(f)\to N$ respectively, and we have ${\rm Coim}(f)\cong {\rm Im}(f)$. A morphism $f:A\to B$ is called a \emph{section} (\emph{retraction}) if it has a left (right) inverse. An abelian category is called \emph{AB3} (\emph{AB3*}) if it has arbitrary coproducts (products). Note that AB3 (AB3*) abelian categories have arbitrary sums (intersections). Examples of such categories include module and comodule categories.

Following \cite{CO1}, recall that a monomorphism $i:K\to M$ (or a subobject $K$ of $M$) in an abelian category $\mathcal{A}$ is called \emph{fully invariant} if for every morphism $h:M\to M$, $hi=i\alpha$ for some morphism $\alpha:K\to K$. Dually, an epimorphism $d:M\to C$ in $\mathcal{A}$ (or a factor object $C=M/K$ of $M$) is called \emph{fully coinvariant} if for every morphism $h:M\to M$, $dh=\beta d$ for some morphism $\beta:C\to C$. A short exact sequence $0\to A\stackrel{i}\to B\stackrel{d}\to C\to 0$ in $\mathcal{A}$ is called \emph{fully invariant} if $i$ is fully invariant, or equivalently, $d$ is fully coinvariant \cite[Lemma 2.5]{CO1}. An object $M$ of $\mathcal{A}$ is called \emph{weak duo} if every section $K\to M$ is fully invariant, or equivalently, every retraction $M\to C$ is fully coinvariant \cite[Definition~2.6]{CO1}.

Recall that a monomorphism $f:M \to N$ in an abelian category $\mathcal{A}$ is called \textit{essential} 
if ${\rm Im}(f)$ is an essential subobject of $N$, that is, for any subobject $X$ of $M$, ${\rm Im}(f)\cap X=0$ implies $X=0$. Dually, an epimorphism $f:M \to N$ in $\mathcal{A}$ is called \textit{superfluous} if ${\rm Ker}(f)$ is a superfluous subobject of $M$, that is, for any subobject $X$ of $M$, ${\rm Ker}(f)+X=M$ implies $X=M$. An object is called \emph{uniform} if every non-zero subobject is essential, and \emph{hollow} if every proper subobject is superfluous.

As natural generalizations of (strongly) extending modules \cite{DHSW} and (strongly) lifting modules \cite{CLVW}, an object $M$ of an abelian category $\mathcal{A}$ is called \emph{(strongly) extending} if every subobject of $M$ is essential in a (fully invariant) direct summand of $M$ and \emph{(strongly) lifting} if every subobject $L$ of $M$ \emph{lies above} a (fully invariant) direct summand of $M$, that is, $L$ contains a (fully invariant) direct summand $K$ of $M$ such that $L/K$ is superfluous in $M/K$. 

Following \cite{CK,CO2}, if $M$ and $N$ are objects of an AB3* abelian category $\mathcal{A}$, then $N$ is called \emph{(strongly) $M$-Baer} if for every family $(f_i)_{i\in I}$ of morphisms $f_i:M\to N$, $\bigcap_{i\in I}{\rm Ker}(f_i)$ is a (fully invariant) direct summand of $M$, and \emph{(strongly) self-Baer} if $N$ is (strongly) $N$-Baer. Dually, $N$ is called \emph{dual (strongly) $M$-Baer} if for every family $(f_i)_{i\in I}$ of morphisms $f_i:M\to N$, $\sum_{i\in I}{\rm Im}(f_i)$ is a (fully invariant) direct summand of $N$, and \emph{dual (strongly) self-Baer} if $N$ is dual (strongly) $N$-Baer. Taking a single-element index set $I$, one gets the concepts of \emph{(strongly) relative Rickart} and \emph{dual (strongly) relative Rickart} objects respectively \cite{CK}. Also, $N$ is called \emph{(strongly) $M$-regular} if $N$ is both (strongly) $M$-Rickart and dual (strongly) $M$-Rickart, and \emph{(strongly) self-regular} if it is $N$-regular \cite{CK,CO2}.

Let $M$ and $N$ be objects of an abelian category $\mathcal{A}$. Following \cite[Definition~9.4]{CK}, recall that $N$ is called \emph{$M$-$\mathcal{K}$-nonsingular} if for any morphism $f:M\to N$ in $\mathcal{A}$, ${\rm Ker}(f)$ essential in $M$ implies $f=0$. Also, $M$ is called \emph{$N$-$\mathcal{T}$-nonsingular} if for any morphism $f:M\to N$ in $\mathcal{A}$, ${\rm Im}(f)$ superfluous in $N$ implies $f=0$. If $M$ is $M$-$\mathcal{K}$-nonsingular ({$M$-$\mathcal{T}$-nonsingular), then $M$ is sometimes simply called \emph{$\mathcal{K}$-nonsingular} (\emph{$\mathcal{T}$-nonsingular}). 

Recall that an object $M$ of an abelian category has the \emph{(strong) summand intersection property}, for short SIP (SSIP), if the intersection of any two (family of) direct summands of $M$ is a direct summand of $M$. Every self-Rickart object has SIP \cite[Corollary~3.10]{CK}, while every self-Baer object has SSIP \cite[Corollary 6.4]{CK}. The dual property to SIP (SSIP) is called the \emph{(strong) summand sum property}, for short SSP (SSSP). Other properties involving intersections and sums of direct summands, such as SIP-extending, ESSIP, SSP-lifting and LSSSP will be introduced later on.  

The main concepts of the paper relate as follows. Part of the diagram is constructed by using the above cited results, while the rest will be clarified in our paper. Each of them has a \emph{strong} version or, in case of summand intersection (sum) properties, \emph{strict} version, obtained by using fully invariant direct summands instead of direct summands.

\begin{scriptsize}
$$\SelectTips{cm}{} 
\xymatrix{
*+[F-:<3pt>]{\rm extending} \ar[r] & *+[F-:<3pt>]{\bf self\textrm{-}CS\textrm{-}Baer} \ar[r] \ar[dr] & *+[F-:<3pt>]{\rm self\textrm{-}CS\textrm{-}Rickart} \ar[dr] &  \\
*+[F-:<3pt>]{\rm self\textrm{-}Baer} \ar[ur] \ar[r] \ar[dr] & *+[F-:<3pt>]{\rm self\textrm{-}Rickart} \ar[ur] \ar[dr] \ar[drr] & *+[F-:<3pt>]{\rm ESSIP} \ar[r] & *+[F-:<3pt>]{\rm SIP\textrm{-}extending} \\ 
& *+[F-:<3pt>]{\rm SSIP} \ar[r] \ar[ur] & *+[F-:<3pt>]{\rm SIP} \ar[ur] & *+[F-:<3pt>]{\rm \mathcal{K}\textrm{-}nonsingular}\\
*+[F-:<3pt>]{\rm self\textrm{-}regular} \ar[uur] \ar[ddr] &&& \\
& *+[F-:<3pt>]{\rm SSSP} \ar[r] \ar[dr] & *+[F-:<3pt>]{\rm SSP} \ar[dr] & *+[F-:<3pt>]{\rm \mathcal{T}\textrm{-}nonsingular} \\
*+[F-:<3pt>]{\rm dual \ self\textrm{-}Baer} \ar[ur] \ar[r] \ar[dr] & *+[F-:<3pt>]{\rm dual \ self\textrm{-}Rickart} \ar[dr] \ar[ur] \ar[urr] & *+[F-:<3pt>]{\rm LSSSP} \ar[r] & *+[F-:<3pt>]{\rm SSP\textrm{-}lifting} \\
*+[F-:<3pt>]{\rm lifting} \ar[r] & *+[F-:<3pt>]{\bf dual \ self\textrm{-}CS\textrm{-}Baer} \ar[r] \ar[ur] & *+[F-:<3pt>]{\rm dual \ self\textrm{-}CS\textrm{-}Rickart} \ar[ur] &   \\
}$$
\end{scriptsize}

\section{(Dual) relative CS-Baer objects}

Having prepared the necessary setting, we are ready to define the main notions of the paper as follows. 

\begin{defn} \rm Let $M$ and $N$ be objects of an abelian category $\mathcal{A}$. 
\begin{enumerate}
\item If $\mathcal{A}$ is AB3*, then $N$ is called:
\begin{enumerate}[(i)]
\item \emph{(strongly) $M$-CS-Baer} if for every family $(f_i)_{i\in I}$ of morphisms $f_i:M\to N$, $\bigcap \limits_{i\in I}{\rm Ker}(f_i)$ is essential in a (fully invariant) direct summand of $M$.
\item \emph{(strongly) self-CS-Baer} if $N$ is (strongly) $N$-CS-Baer.
\end{enumerate}
\item If $\mathcal{A}$ is AB3, then $N$ is called:
\begin{enumerate}[(i)]
\item \emph{dual (strongly) $M$-CS-Baer} if for every family $(f_i)_{i\in I}$ of morphisms $f_i:M\to N$, $\sum \limits_{i\in I}{\rm Im}(f_i)$ lies above a (fully invariant) direct summand of $N$.
\item \emph{dual (strongly) self-CS-Baer} if $N$ is dual (strongly) $N$-CS-Baer.
\end {enumerate}
\end{enumerate}
\end{defn}

\begin{rem} \rm (i) Our terminology follows the one in abelian categories from \cite{CK,DNTD}, unlike that of \cite{RR04}, in which a module $M$ is called $N$-Baer if for every family $(f:M\to N)_{i\in I}$ of homomorphisms, $\bigcap_{i\in I}{\rm Ker}(f_i)$ is a direct summand of $M$, and dual $N$-Baer if for every family $(f:M\to N)_{i\in I}$ of homomorphisms, $\sum_{i\in I}{\rm Im}(f_i)$ is a direct summand of $N$. Also, (dual) self-Baer modules are simply called (dual) Baer in \cite{RR04}. 

(ii) When taking a single-element index set $I$ in the above definition, one obtains the concepts of (strongly) relative CS-Rickart and dual (strongly) relative CS-Rickart objects \cite{CR1,CR3}. 

(iii) Let $\A$ be an AB3* abelian category. Then every (strongly) self-Baer object and every (strongly) extending object of $\A$ is (strongly) self-CS-Baer, and every (strongly) relative CS-Baer object is (strongly) relative CS-Rickart. An object $N$ of $\A$ is $M$-CS-Baer for every uniform object $M$ of $\A$. We leave to the reader the statements of the dual remarks.
\end{rem}

(Dual) relative CS-Baer objects are related to (dual) strongly relative CS-Baer objects as follows. 

\begin{prop}\label{st0}
Let $M$ and $N$ be two objects of an abelian category $\mathcal{A}$.
    \begin{enumerate}
       \item Assume that $\mathcal{A}$ is AB3*, and every direct summand of $M$ is 
               isomorphic to a subobject of $N$. Then $N$ is strongly   
               $M$-CS-Baer if and only if $N$ is $M$-CS-Baer and  
               $M$ is weak duo.
       \item Assume that $\mathcal{A}$ is AB3, and every direct summand of $N$ is   
                isomorphic to a factor object of $M$. Then $N$ is   
                dual strongly $M$-CS-Baer if and only if $N$ is  
                dual  $M$-CS-Baer and $N$ is weak duo.
   \end{enumerate}
\end{prop}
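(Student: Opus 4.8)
The plan is to prove part (1) directly and then deduce part (2) by the duality principle in $\mathcal{A}$: replacing kernels by images, subobjects by factor objects, and ``essential in a direct summand'' by ``lies above a direct summand,'' and noting that weak duo is a self-dual condition, part (2) will follow with no additional argument.

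For the backward implication of part (1), I would argue that it is essentially immediate. Assuming $N$ is $M$-CS-Baer and $M$ is weak duo, any family $(f_i)_{i \in I}$ of morphisms $f_i : M \to N$ has $\bigcap_{i \in I}{\rm Ker}(f_i)$ essential in some direct summand $D$ of $M$; since in a weak duo object every section, and hence every direct summand, is fully invariant, $D$ is a \emph{fully invariant} direct summand, which gives the strong property at once.

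The substance lies in the forward implication. That strongly $M$-CS-Baer implies $M$-CS-Baer is clear (fully invariant direct summands are direct summands), so the task reduces to showing that $M$ is weak duo, i.e. that an arbitrary direct summand $D$ of $M$ is fully invariant. Writing $M = D \oplus D'$, the idea is to use the hypothesis to manufacture a test morphism into $N$ whose kernel is exactly $D$: since the direct summand $D'$ is isomorphic to a subobject of $N$, I would fix a monomorphism $\iota : D' \to N$ and set $f = \iota p$, where $p : M \to D'$ is the projection with ${\rm Ker}(p) = D$, so that ${\rm Ker}(f) = D$. Applying the strong hypothesis to the one-element family $\{f\}$ yields a fully invariant direct summand $E$ of $M$ in which $D$ is essential. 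It then remains to see that $D = E$, which makes $D$ itself fully invariant: invoking modularity of the subobject lattice, from $D \subseteq E$ with $D$ a direct summand of $M$ one gets $E = D \oplus (E \cap D')$, and since $D$ is essential in $E$ while $(E \cap D') \cap D = 0$, the complement $E \cap D'$ must vanish, whence $E = D$.

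I expect the only delicate points to be the construction of $f$ and the final lattice step. For the former, one must embed the \emph{complement} $D'$ rather than $D$ in order to obtain kernel exactly $D$, which is precisely where the hypothesis ``every direct summand of $M$ is isomorphic to a subobject of $N$'' is used (it guarantees the required monomorphism for each summand $D'$). For the latter, the content is the observation that a direct summand which is essential in a larger direct summand $E$ must coincide with $E$. Neither presents a genuine difficulty, and part (2) introduces nothing new beyond dualizing these two steps.
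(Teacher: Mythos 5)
Your proof is correct and follows essentially the same route as the paper: the backward implication is verbatim the paper's argument (the summand $U$ containing the kernel intersection is fully invariant because $M$ is weak duo), and your forward implication is exactly the content of the result the paper cites as a black box (that strongly relative CS-Rickart implies the domain is weak duo), established by the same test-morphism construction $f=\iota p$ with $\mathrm{Ker}(f)=D$. The only difference is that you supply that argument in full, including the modularity step $E=D\oplus(E\cap D')$, where the paper defers to \cite[Proposition~2.5]{CR3}; both your use of the hypothesis on summands of $M$ and the final lattice computation are correct.
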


\begin{proof} (1) Since  $N$ is strongly $M$-CS-Baer, it is both $M$-CS-Baer and strongly $M$-CS-Rickart, hence $M$ is weak duo \cite[Proposition~2.5]{CR3}. 

Conversely, assume that $N$ is $M$-CS-Baer and $M$ is weak duo. Let $(f_i)_{i\in I}$ be a family of morphisms $f_i:M\to N$ in $\mathcal{A}$. Then there  exists a direct summand $U$ of $M$ such that $\bigcap\limits_{i\in I}{\rm Ker}(f_i)$ is essential in $U$. By the fact that $M$ is weak duo, we deduce that $U$ is a fully invariant direct summand of $M$, and therefore $N$ is strongly $M$-CS-Baer.
\end{proof}

The following corollary generalizes the module-theoretic result \cite[Theorem~2]{Nhan}, and often will be implicitly used throughout the paper.

\begin{coll}\label{st00}
Let $M$ be an object of an abelian category $\mathcal{A}$.
\begin{enumerate}
\item Assume that $\mathcal{A}$ is AB3*. Then the following are equivalent:
\begin{enumerate}[(i)] 
\item $M$ is strongly self-CS-Baer.
\item $M$ is self-CS-Baer and weak duo.
\item $M$ is self-CS-Baer and ${\rm End}_{\mathcal{A}}(M)$ is abelian.
\end{enumerate}
\item Assume that $\mathcal{A}$ is AB3. Then the following are equivalent:
\begin{enumerate}[(i)]
\item $M$ is dual strongly self-CS-Baer.
\item $M$ is dual self-CS-Baer and weak duo.
\item $M$ is dual self-CS-Baer and ${\rm End}_{\mathcal{A}}(M)$ is abelian.
\end{enumerate}
\end{enumerate}
\end{coll}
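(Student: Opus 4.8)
The plan is to deduce part~(1) almost entirely from Proposition~\ref{st0}(1) and then obtain part~(2) by duality. First I would observe that the equivalence (i)$\Leftrightarrow$(ii) is exactly the self case $N=M$ of Proposition~\ref{st0}(1). Its hypothesis, that every direct summand of $M$ be isomorphic to a subobject of $N$, is automatically satisfied when $N=M$, since a direct summand of $M$ is by definition a subobject of $M$. Hence Proposition~\ref{st0}(1) yields at once that $M$ is strongly self-CS-Baer if and only if $M$ is self-CS-Baer and weak duo, with no further argument.

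It then remains to prove (ii)$\Leftrightarrow$(iii), for which it suffices to show that $M$ is weak duo if and only if $\End_{\A}(M)$ is abelian; the common hypothesis that $M$ be self-CS-Baer plays no role in this step. I would argue through the correspondence between direct summands of $M$ and idempotents of $\End_{\A}(M)$. A direct summand is the image of an idempotent $e$, and $\Im(e)$ is fully invariant precisely when $he=ehe$ for every $h\in\End_{\A}(M)$, while its complement $\Im(1-e)=\mathrm{Ker}(e)$ is fully invariant precisely when $eh=ehe$ for every such $h$ (a short expansion of $(1-e)h(1-e)=h(1-e)$). Since weak duo means that every direct summand---equivalently, both $\Im(e)$ and $\Im(1-e)$ for every idempotent $e$---is fully invariant, the two identities together force $eh=he$, that is, every idempotent of $\End_{\A}(M)$ is central, so $\End_{\A}(M)$ is abelian. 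Conversely, if every idempotent is central then $ehe=he$ holds trivially, so each $\Im(e)$ is fully invariant and $M$ is weak duo. This closes the loop for part~(1).

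Finally, part~(2) follows by applying the duality principle in abelian categories to part~(1): dualising interchanges $M$-CS-Baer with dual $M$-CS-Baer and sections with retractions, replaces fully invariant subobjects by fully coinvariant factor objects, and leaves invariant both the self-CS-Baer format and the notion of an abelian endomorphism ring, which is self-dual. Concretely, the same scheme runs through Proposition~\ref{st0}(2) in the self case for (i)$\Leftrightarrow$(ii), and through the dual idempotent computation---phrased with the factor objects $M\to M/\mathrm{Ker}(e)$ and the defining identity $dh=\beta d$ for fully coinvariance---for (ii)$\Leftrightarrow$(iii).

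The only obstacle I anticipate is one of bookkeeping rather than mathematics: carrying out the idempotent computation diagrammatically in a general abelian category, where ``$\Im(e)$ is fully invariant'' must be expressed purely through the morphism identity $he=ehe$ (and dually through $dh=\beta d$) instead of by chasing elements. Everything else is an immediate invocation of Proposition~\ref{st0} together with the self-duality of the hypotheses involved.
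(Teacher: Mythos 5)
Your proposal is correct, and for the step (ii)$\Leftrightarrow$(iii) it takes a genuinely different route from the paper. The equivalence (i)$\Leftrightarrow$(ii) is handled exactly as in the paper, by specializing Proposition~\ref{st0} to $N=M$ (where the subobject hypothesis is indeed vacuous). For (ii)$\Leftrightarrow$(iii), however, the paper does not argue directly: it passes through the CS-Rickart theory, citing \cite[Proposition~2.8]{CR3} (strongly self-CS-Rickart $\Leftrightarrow$ self-CS-Rickart with abelian endomorphism ring) to get (i)$\Rightarrow$(iii), and then, for (iii)$\Rightarrow$(ii), upgrades self-CS-Baer to self-CS-Rickart, invokes the same proposition to conclude strong self-CS-Rickartness, and finally extracts weak duo from \cite[Corollary~2.6]{CR3}. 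You instead prove the clean standalone fact that $M$ is weak duo if and only if every idempotent of $\End_{\A}(M)$ is central, via the identities $he=ehe$ (full invariance of $\Im(e)$) and $eh=ehe$ (full invariance of $\Im(1-e)$), which together force $eh=he$; your computation is correct, including the observation that idempotents split in any abelian category so that direct summands are exactly images of idempotents. Your route makes transparent that the self-CS-Baer hypothesis is irrelevant to (ii)$\Leftrightarrow$(iii) and keeps the argument self-contained, at the cost of redoing diagrammatically a computation the paper outsources to \cite{CR3} (where, at bottom, the same idempotent identities are surely doing the work; the authors themselves use the special case ``$R$ is weak duo iff $R$ is abelian'' later, in Proposition~\ref{p:semiperfect}). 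The appeal to the duality principle for part (2) matches the paper, which omits that half of the proof entirely.
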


\begin{proof} (1) (i)$\Leftrightarrow$(ii) This holds by Proposition \ref{st0}.

(ii)$\Rightarrow$(iii) If $M$ is strongly self-CS-Baer, then $M$ is self-CS-Baer and ${\rm End}_{\A}(M)$ is abelian by \cite[Proposition 2.8]{CR3}. 

(iii)$\Rightarrow$(ii) If $M$ is self-CS-Baer and ${\rm End}_{\A}(M)$ is abelian, then $M$ is self-CS-Rickart and ${\rm End}_{\A}(M)$ is abelian, and so $M$ is strongly self-CS-Rickart by \cite[Proposition 2.8]{CR3}. Finally, $M$ is weak duo by \cite[Corollary 2.6]{CR3}.
\end{proof}

\begin{coll}\label{st01}
Let $M$ be an indecomposable object of an abelian category $\mathcal{A}$.
           \begin{enumerate}
                    \item If $\mathcal{A}$ is AB3*, then $M$ is strongly self-CS-Baer if and only if $M$  
                             is self-CS-Baer.
                    \item If $\mathcal{A}$ is AB3, then $M$ is dual strongly self-CS-Baer if and only if 
                             $M$ is dual self-CS-Baer.
\end{enumerate}
\end{coll}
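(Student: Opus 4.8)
The plan is to deduce both statements from Corollary \ref{st00}, which characterizes (dual) strongly self-CS-Baer objects as precisely the (dual) self-CS-Baer objects that are in addition weak duo. Since a fully invariant direct summand is in particular a direct summand, the implication ``strongly self-CS-Baer $\Rightarrow$ self-CS-Baer'' (and its dual) is immediate, and so the entire content lies in the reverse implication. By Corollary \ref{st00}, this reverse implication follows once one shows that every indecomposable object is automatically weak duo.

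First I would verify that an indecomposable object $M$ is weak duo. Let $i:K\to M$ be an arbitrary section. Then $i$ is a split monomorphism, so $\Im(i)$ is a direct summand of $M$; as $M$ is indecomposable, $\Im(i)$ equals either $0$ or $M$. If $\Im(i)=0$, then $i=0$ and, since $i$ admits a left inverse, $K=0$; here $i$ is fully invariant trivially, because for any $h:M\to M$ one has $hi=0=i\cdot 0$. If $\Im(i)=M$, then $i$ is both a monomorphism and an epimorphism, hence an isomorphism, and for any $h:M\to M$ the morphism $\alpha=i^{-1}hi:K\to K$ satisfies $i\alpha=hi$; thus $i$ is fully invariant. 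Therefore every section into $M$ is fully invariant, that is, $M$ is weak duo.

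With this observation in hand, part (1) follows at once: if $M$ is self-CS-Baer, then, being indecomposable, $M$ is weak duo, and hence $M$ is strongly self-CS-Baer by Corollary \ref{st00}(1). For part (2), rather than repeating the argument I would invoke the duality principle. Passing to the opposite category $\mathcal{A}^{op}$ (which is AB3* when $\mathcal{A}$ is AB3), the notions of dual self-CS-Baer and dual strongly self-CS-Baer in $\mathcal{A}$ become self-CS-Baer and strongly self-CS-Baer in $\mathcal{A}^{op}$, while indecomposability is preserved under passage to $\mathcal{A}^{op}$. Hence part (2) is exactly part (1) read in $\mathcal{A}^{op}$; alternatively one may argue directly via Corollary \ref{st00}(2), using that ``weak duo'' is a self-dual notion (it can be phrased equivalently through retractions $M\to C$ being fully coinvariant).

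I expect no substantial obstacle here. The only points requiring care are the elementary facts that indecomposability forces every section to be either zero or an isomorphism, and that both the zero subobject and the whole object $M$ are fully invariant; both are routine in an abelian category.
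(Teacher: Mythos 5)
Your proof is correct and follows exactly the route the paper intends: the corollary is stated without proof as an immediate consequence of Corollary \ref{st00}, the only observation needed being that an indecomposable object is weak duo (every section has image a direct summand, hence $0$ or $M$, and in either case is fully invariant), which you verify carefully. The dual part via the opposite category (or the self-duality of the weak duo condition) is likewise the intended argument.
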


Next we give some examples and non-examples of (dual) relative CS-Baer and (dual) strongly relative CS-Baer objects. Further examples in connection with the other concepts of the paper will be given in the corresponding sections later on.

\begin{ex} \rm We first consider some examples in the category of abelian groups. Let $n$ be a positive integer and denote $\mathbb{Z}_n=\mathbb{Z}/n\mathbb{Z}$. 
         
(i) $\mathbb{Z}$ is strongly self-CS-Baer, but not dual (strongly) self-CS-Rickart (\cite[Example~2.16]{Tribak} or \cite[Example~2.6 (ii)]{CR1}), and thus $\mathbb{Z}$ is not dual self-CS-Baer.
               
(ii) $\mathbb{Z}$ is strongly $\mathbb{Z}_n$-CS-Baer and dual strongly $\mathbb{Z}_n$-CS-Baer, because ${\rm Hom}_{\mathbb{Z}}(\mathbb{Z}_n, \mathbb{Z})=0$.

(iii) For relatively prime integers $m$ an $n$, $\mathbb{Z}_n$ is strongly $\mathbb{Z}_m$-CS-Baer, because ${\rm Hom}_{\mathbb{Z}}(\mathbb{Z}_m,\mathbb{Z}_n)=0$.

(iv) Let $M$ be an extending weak duo (e.g., uniform) object and let $N$ be a lifting weak duo (e.g., hollow) object in an abelian category. Then $N$ is strongly $M$-CS-Baer and dual strongly $M$-CS-Baer. For example, $\mathbb{Z}_4$ is strongly $\mathbb{Z}_6$-CS-Baer and dual strongly $\mathbb{Z}_6$-CS-Baer.

(v) $\mathbb{Z}\oplus \mathbb{Z}_p$ (for some prime $p$) is not strongly self-CS-Baer, because it is not strongly self-CS-Rickart \cite[Example~2.9 (v)]{CR3}. 

(vi) $M=\mathbb{Z}\oplus \mathbb{Z}_2$ is self-CS-Baer, but not strongly self-CS-Baer.
Indeed, for any $f\in {\rm End}_{\mathbb{Z}}(M)$, ${\rm Ker}(f)\in \left\{0, \mathbb{Z}\oplus 0, 0\oplus \mathbb{Z}_2, \mathbb{Z}\oplus \mathbb{Z}_2, K , 2\mathbb{Z}\oplus \mathbb{Z}_2\right\}$, where $K=\{(m,m+2\mathbb{Z})| m \in \mathbb{Z}\}$. Then for any family $(f_i)_{i\in I}$ of endomorphisms of $M$, we have $$\bigcap_{i\in I}{\rm Ker}(f_i)\in \left\{0, 2\mathbb{Z}\oplus 0, \mathbb{Z}\oplus 0, 0\oplus \mathbb{Z}_2, \mathbb{Z}\oplus \mathbb{Z}_2, K , 2\mathbb{Z}\oplus \mathbb{Z}_2\right\}.$$ Now it is easily checked that $M$ is self-CS-Baer. Finally, it is not strongly self-CS-Baer by  \cite[Example~2.9 (v)]{CR3}.
\end{ex}

\begin{ex} \label{ex2} \rm Now we consider some examples in module categories. 

(i) Consider the ring $R=\begin{pmatrix}\mathbb{Z}&\mathbb{Z}\\0&\mathbb{Z} \end{pmatrix}$. 
Then $R$ is neither a self-CS-Baer right $R$-module nor a dual self-CS-Baer right $R$-module, because it is neither a self-CS-Rickart right $R$-module nor a dual self-CS-Rickart right $R$-module \cite [Example~2.7]{CR1}.

(ii) Let $K$ be a field and consider the ring $R=\begin{pmatrix}K&K[X]\\0&K[X] \end{pmatrix}$. 
The direct summands of the right $R$-module $R$ are exactly the following right ideals of $R$: 
$$\begin{pmatrix}0&0\\0&0 \end{pmatrix}, \begin{pmatrix}K&K[X]\\0&K[X] \end{pmatrix}, 
\begin{pmatrix}K&K[X]\\0&0 \end{pmatrix}, \begin{pmatrix}0&f\\0&1 \end{pmatrix}K[X] \quad (f\in K[X]).$$
Every endomorphism of the right $R$-module $R$ is of the form $t_a:R\to R$ given by $t_a(b)=ab$ 
for some $a\in R$, and
\[{\rm Ker}(t_a)\in \left\{\begin{pmatrix}0&0\\0&0 \end{pmatrix}, \begin{pmatrix}0&f\\0&1 \end{pmatrix}K[X] , \begin{pmatrix}K&K[X]\\0&0 \end{pmatrix}, R \Big | f\in K[X]\right\}.\]
Thus, for any family $(f_i)_{i\in I}$ of $R$-homomorphisms $f_i \in \End_R(R)$ we have:
$$\bigcap\limits_{i \in I}{\rm Ker}(f_i)\in \left\{\begin{pmatrix}0&0\\0&0 \end{pmatrix}, \begin{pmatrix}0&f\\0&1 \end{pmatrix}K[X] , \begin{pmatrix}K&K[X]\\0&0 \end{pmatrix}, R  \Big | f\in K[X]\right\},$$
and therefore $R$ is a self-CS-Baer right $R$-module. It is not a strongly self-CS-Baer right $R$-module, because ${\rm End}_R(R)\cong R$ is not abelian. Note also that the right $R$-module $R$ is not dual (strongly) self-CS-Baer, because it is not dual (strongly) self-CS-Rickart \cite [Example~2.8]{CR1} (\cite [Example 2.9]{CR3}).
\end{ex}

\section{(Dual) relative CS-Baer versus (dual) relative Baer}

Every (dual) relative Baer object of an abelian category is clearly (dual) relative CS-Baer, but the converse does not hold in general, as the following examples show.

\begin{ex} \rm 
(i) Let $n > 1$ be an integer and let $p$ be a prime. Let $M_1=\mathbb{Z}_{p^n}$ and $M_2=\mathbb{Z}_{p^n} \oplus \mathbb{Z}_{p^{n+1}}$. Then $M_1$ and $M_2$ are extending and weak duo (see \cite[Proposition A.12]{MM} and \cite[Theorem 3.10]{OHS}), and hence they are strongly self-CS-Baer by Corollary \ref{st00}. However, $M_1$ and $M_2$ are not strongly self-Baer by \cite[Corollary~6.7]{CO2}. 

(ii) Following \cite[Example 3.13]{LRR10}, consider the ring $A=\begin{pmatrix} \mathbb{Z}_2 & \mathbb{Z}_2 \\
0 & \mathbb{Z}_2 \end{pmatrix}$, and the subgroup $G=\{1,g\}$ of ${\rm Aut}(A)$, where $g$ is the conjugation by $\begin{pmatrix} 1 & 1 \\ 0 & 1 \end{pmatrix}$. Consider the skew group ring $R=A\ast G$ and the right $R$-module $M=A$. The endomorphisms of $M$ are given by:
\[\varphi_1=\begin{pmatrix}
0 & 0 \\
0 & 0 \\
\end{pmatrix}, \varphi_2=\begin{pmatrix}
0 & 1 \\
0 & 0 \\
\end{pmatrix}, \varphi_3=\begin{pmatrix}
1 & 0 \\
0 & 1 \\
\end{pmatrix}, \varphi_4=\begin{pmatrix}
1 & 1 \\
0 & 1 \\
\end{pmatrix},\] 
which have the following kernels: 
\[{\rm Ker} (\varphi_1)=M, \quad {\rm Ker} (\varphi_2)=\begin{pmatrix}
\mathbb{Z}_2 & \mathbb{Z}_2 \\
0 & 0 \\
\end{pmatrix}, \quad {\rm Ker} (\varphi_3)=0, \quad {\rm Ker} (\varphi_4)=\begin{pmatrix}
0 & \mathbb{Z}_2 \\
0 & 0 \\
\end{pmatrix}\] \cite[Example~2.3]{CK18}. One may easily notice that $M$ is a self-CS-Baer right $R$-module with abelian endomorphism ring, hence it is a strongly self-CS-Baer right $R$-module by Corollary \ref{st00}. But it is not strongly self-Baer, because it is not self-Rickart \cite[Example 3.13]{LRR10}.

(iii) The $\mathbb{Z}$-module $\mathbb{Z}_n$ (where $n>1$ is an integer) is dual strongly self-CS-Baer (see Corollary \ref{strong-d-Baer} below), but $\mathbb{Z}_n$ is dual strongly self-Baer if and only if it is semisimple by \cite[Theorem 3.4]{KT}. Also, $\mathbb{Z}_n$ is strongly $\mathbb{Z}$-CS-Baer, but in general it is not strongly $\mathbb{Z}$-Baer.  

(iv) Consider the $\mathbb{Z}$-module $\mathbb{Z}(p^{\infty})$ for some prime $p$, and denote by $S$ its endomorphism ring. Then $S$ is an indecomposable dual self-CS-Rickart right $S$-module which is not dual self-Rickart \cite[Example ~2.13]{Tribak}. Hence it is a dual self-CS-Baer right $S$-module, but not dual self-Baer.
\end{ex}

The following lemma on relative nonsingularity will be useful. 

\begin{lemm} \label{l:nonsing}  Let $M$ and $N$ be objects of an abelian category $\mathcal{A}$. 
\begin {enumerate}
\item Assume that $\mathcal{A}$ is AB3*. Then $N$ is $M$-$\mathcal{K}$-nonsingular if and only if $N^I$ is $M$-$\mathcal{K}$-nonsingular for every set $I$.
\item Assume that $\mathcal{A}$ is AB3. Then $N$ is $M$-$\mathcal{T}$-nonsingular if and only if $N$ is $M^{(I)}$-$\mathcal{T}$-nonsingular for every set $I$.
\end{enumerate}
\end{lemm}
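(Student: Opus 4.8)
The plan is to prove part (1) directly by a components-of-a-product argument, and then to deduce part (2) from it through the duality principle in abelian categories: under this duality AB3* passes to AB3, kernels to cokernels, essential monomorphisms to superfluous epimorphisms, products to coproducts, and $\mathcal{K}$-nonsingularity to $\mathcal{T}$-nonsingularity. Hence the whole of the work lies in part (1).

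In part (1) the implication from $N^I$ back to $N$ is trivial: specialising to a one-element set $I$ gives $N^I\cong N$, so if $N^I$ is $M$-$\mathcal{K}$-nonsingular for every $I$ then so is $N$. For the forward implication I would assume that $N$ is $M$-$\mathcal{K}$-nonsingular, fix a set $I$, and consider an arbitrary morphism $g:M\to N^I$ with ${\rm Ker}(g)$ essential in $M$, aiming to show $g=0$. Denoting by $\pi_i:N^I\to N$ the canonical projections and putting $g_i:=\pi_i g$, the decisive point is that $g$ is determined by its components $(g_i)_{i\in I}$ and that ${\rm Ker}(g)\subseteq {\rm Ker}(g_i)$ for every $i$, because $g_i=\pi_i g$.

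The rest is then a short formal step. Since ${\rm Ker}(g)$ is essential in $M$ and sits inside each ${\rm Ker}(g_i)$, every ${\rm Ker}(g_i)$ is again essential in $M$, by the elementary fact that any subobject of $M$ containing an essential subobject is essential. The $M$-$\mathcal{K}$-nonsingularity of $N$ now forces $g_i=0$ for all $i\in I$, whence $g=0$ by the universal property of the product $N^I$. Therefore $N^I$ is $M$-$\mathcal{K}$-nonsingular, which completes part (1).

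I do not expect any genuine obstacle: the argument reduces to the observation that a morphism into a product is zero exactly when all its components are zero, together with the stability of essentiality under passing to a larger subobject. The only point demanding care is the dualisation yielding part (2), where one must reverse all arrows consistently, so that the component decomposition is now performed along coproduct injections rather than product projections, the intersection of kernels is replaced by the sum of images, and ``essential'' is replaced by ``superfluous''. Granting part (1), part (2) follows by this formal duality.
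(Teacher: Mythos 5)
Your proposal is correct and follows essentially the same route as the paper: the paper also proves part (1) by composing with the canonical projections, noting that ${\rm Ker}(f)=\bigcap_{i\in I}{\rm Ker}(f_i)$ is essential forces each ${\rm Ker}(f_i)$ to be essential, concluding $f_i=0$ and hence $f=0$, with the converse being clear and part (2) obtained by duality.
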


\begin{proof} (1) Assume that $N$ is $M$-$\mathcal{K}$-nonsingular. Let $I$ be a set and let $f:M\to N^I$ be a morphism in $\A$ such that ${\rm Ker}(f)$ essential in $M$. For every $i\in I$, denote by $p_i:N^I\to N$ the canonical projection and $f_i=p_if:M\to N$. Since $\bigcap_{i\in I}{\rm Ker}(f_i)={\rm Ker}(f)$ is essential in $M$, so is ${\rm Ker}(f_i)$ for every $i\in I$. By hypothesis, we have $f_i=0$ for every $i\in I$, and thus $f=0$. Hence $N^I$ is $M$-$\mathcal{K}$-nonsingular.
The converse is clear. 
\end{proof}

\begin{theo} \label{t:nonsing} Let $M$ and $N$ be objects of an abelian category $\mathcal{A}$. 
\begin {enumerate}
\item Assume that $\mathcal{A}$ is AB3*. Then $N$ is (strongly) $M$-CS-Baer and $N$ is $M$-$\mathcal{K}$-nonsingular if and only if $N$ is (strongly) $M$-Baer.
\item Assume that $\mathcal{A}$ is AB3. Then $N$ is dual (strongly) $M$-CS-Baer and $N$ is $M$-$\mathcal{T}$-nonsingular if and only if $N$ is dual (strongly) $M$-Baer.
\end{enumerate}
\end{theo}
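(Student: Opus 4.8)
The plan is to establish statement (1) directly and to deduce statement (2) from it via the duality principle in abelian categories, as announced in the introduction. Under this duality AB3* is replaced by AB3, kernels by images, essential subobjects by superfluous quotients, and intersections by sums, so that ``(strongly) $M$-CS-Baer'' becomes ``dual (strongly) $M$-CS-Baer'', ``(strongly) $M$-Baer'' becomes ``dual (strongly) $M$-Baer'', and $M$-$\mathcal{K}$-nonsingularity becomes the corresponding $\mathcal{T}$-nonsingularity condition on the same family of morphisms. Hence it suffices to prove (1).

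The reverse implication of (1) is immediate. If $N$ is (strongly) $M$-Baer, then every $\bigcap_{i\in I}{\rm Ker}(f_i)$ is a (fully invariant) direct summand of $M$, hence essential in one, so $N$ is (strongly) $M$-CS-Baer. For $M$-$\mathcal{K}$-nonsingularity, let $f:M\to N$ have ${\rm Ker}(f)$ essential in $M$; applying the $M$-Baer property to the one-element family $\{f\}$ makes ${\rm Ker}(f)$ a direct summand, say $M={\rm Ker}(f)\oplus W$, and then essentiality forces $W=0$, so ${\rm Ker}(f)=M$ and $f=0$.

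The real content is the forward implication. Assume $N$ is (strongly) $M$-CS-Baer and $M$-$\mathcal{K}$-nonsingular, fix a family $(f_i)_{i\in I}$ with $f_i:M\to N$, and set $K=\bigcap_{i\in I}{\rm Ker}(f_i)$. By hypothesis there is a (fully invariant) direct summand $U$ of $M$, say $M=U\oplus V$, with $K$ essential in $U$; the goal is to show $K=U$. Let $\pi:M\to U$ be the projection with kernel $V$ and $\iota:U\to M$ the inclusion, and consider the morphisms $h_i=f_i\iota\pi:M\to N$, which agree with $f_i$ on $U$ and vanish on $V$. A routine computation with the splitting $M=U\oplus V$ gives ${\rm Ker}(h_i)=({\rm Ker}(f_i)\cap U)\oplus V$, and since $K\subseteq {\rm Ker}(f_i)\cap U$ we get ${\rm Ker}(h_i)\supseteq K\oplus V$. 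Because $K$ is essential in $U$, the subobject $K\oplus V$ is essential in $U\oplus V=M$, so each ${\rm Ker}(h_i)$ is essential in $M$. Now $M$-$\mathcal{K}$-nonsingularity yields $h_i=0$ for all $i$, and as $\pi$ is an epimorphism this forces $f_i\iota=0$, i.e. $U\subseteq {\rm Ker}(f_i)$ for every $i$, whence $U\subseteq K$ and so $K=U$. Thus $K$ is a (fully invariant) direct summand of $M$, proving $N$ is (strongly) $M$-Baer. The ``strongly'' version requires no extra work, since the summand $U$ produced by the strongly CS-Baer hypothesis is already fully invariant and we show $K=U$.

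The crux --- and the only delicate point --- is making $M$-$\mathcal{K}$-nonsingularity applicable: that notion only constrains morphisms emanating from $M$, whereas the CS-Baer hypothesis naturally produces information about the restrictions $f_i\iota:U\to N$. Precomposing with the projection $\pi$ transports this information back to morphisms $M\to N$ without disturbing the kernels off $U$, which is exactly what lets essentiality of $K$ in $U$ be promoted to essentiality of ${\rm Ker}(h_i)$ in $M$. The only background facts invoked are standard for abelian categories admitting the relevant (co)limits: that a subobject containing an essential subobject is essential, that $K$ essential in $U$ implies $K\oplus V$ essential in $U\oplus V$, and the behaviour of kernels under a direct-sum splitting.
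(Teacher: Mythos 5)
Your proof is correct, but it takes a genuinely different route from the paper. The paper proves part (1) as a chain of equivalences through the already-developed CS-Rickart theory: by Proposition~7.2 (\verb|csr-csb|), $N$ is (strongly) $M$-CS-Baer iff $N^I$ is (strongly) $M$-CS-Rickart for every set $I$; by Lemma~4.2, $M$-$\mathcal{K}$-nonsingularity of $N$ passes to all powers $N^I$; the CS-Rickart analogue of the theorem from the earlier papers [CR1, Theorem~2.11] and [CR3, Theorem~2.12] then converts this to ``$N^I$ is (strongly) $M$-Rickart for all $I$'', which is equivalent to $N$ being (strongly) $M$-Baer by [CK, Lemma~6.2] and [CO2, Lemma~5.8]. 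Your argument instead works directly with an arbitrary family $(f_i)_{i\in I}$: the key device of precomposing with the projection $\pi:M\to U$ to turn the restrictions $f_i\iota$ into morphisms $h_i=f_i\iota\pi$ out of $M$ with $\mathrm{Ker}(h_i)\supseteq K\oplus V$ essential in $M$, then invoking $M$-$\mathcal{K}$-nonsingularity to conclude $U\subseteq\mathrm{Ker}(f_i)$, is sound; it is essentially the proof of the single-morphism (Rickart) case inlined and run uniformly over the whole family. What the paper's route buys is brevity and reuse of the product reductions it needs elsewhere anyway; what yours buys is a self-contained proof that depends only on elementary facts (a finite direct sum of an essential monomorphism with an identity is essential, and kernels behave as expected under a splitting $M=U\oplus V$), both of which hold in any abelian category. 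The handling of the ``strongly'' variant and the appeal to duality for part (2) match the paper's conventions.
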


\begin {proof} (1) By the forthcoming Proposition \ref{csr-csb} and Lemma \ref{l:nonsing}, $N$ is (strongly) $M$-CS-Baer and $N$ is $M$-$\mathcal{K}$-nonsingular if and only if $N^I$ is (strongly) $M$-CS-Rickart and $N^I$ is $M$-$\mathcal{K}$-nonsingular for any set $I$. This is equivalent to $N^I$ being (strongly) $M$-Rickart by \cite[Theorem 2.11]{CR1} and \cite[Theorem 2.12]{CR3}, and furthermore, to $N$ being (strongly) $M$-Baer by \cite[Lemma 6.2]{CK} and \cite[Lemma 5.8]{CO2}. 
\end{proof}

The following corollary generalizes the module-theoretic result \cite[Theorem~1]{Nhan}.

\begin{coll} Let $M$ be an object of an abelian category $\mathcal{A}$. 
\begin {enumerate}
\item Assume that $\mathcal{A}$ is AB3*. Then $M$ is (strongly) self-CS-Baer and $M$-$\mathcal{K}$-nonsingular if and only if $M$ is (strongly) self-Baer.
\item Assume that $\mathcal{A}$ is AB3. Then $M$ is dual (strongly) self-CS-Baer and $M$-$\mathcal{T}$-nonsingular if and only if $M$ is dual (strongly) self-Baer.
\end{enumerate}
\end{coll}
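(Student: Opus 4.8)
The plan is to recognize that this corollary is the specialization $N=M$ of the preceding Theorem \ref{t:nonsing}, so no new argument is required beyond unwinding the definitions of the ``self'' notions. Recall from the preliminaries that $M$ is \emph{(strongly) self-CS-Baer} precisely when $M$ is (strongly) $M$-CS-Baer, that $M$ is \emph{(strongly) self-Baer} precisely when $M$ is (strongly) $M$-Baer, and likewise in the dual versions; moreover $M$ being \emph{$M$-$\mathcal{K}$-nonsingular} is exactly the condition appearing in Theorem \ref{t:nonsing}(1) with $N$ replaced by $M$.

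For part (1), I would invoke Theorem \ref{t:nonsing}(1) with $N=M$ in the AB3* setting. That theorem gives that $M$ is (strongly) $M$-CS-Baer together with $M$ being $M$-$\mathcal{K}$-nonsingular if and only if $M$ is (strongly) $M$-Baer. Rewriting each relative notion in its self-form, this reads: $M$ is (strongly) self-CS-Baer and $M$-$\mathcal{K}$-nonsingular if and only if $M$ is (strongly) self-Baer, which is the desired equivalence. For part (2), I would argue symmetrically, applying Theorem \ref{t:nonsing}(2) with $N=M$ in the AB3 setting and translating $M$ being dual (strongly) $M$-CS-Baer (resp.\ dual (strongly) $M$-Baer) into dual (strongly) self-CS-Baer (resp.\ dual (strongly) self-Baer), while the hypothesis $M$-$\mathcal{T}$-nonsingular is carried over verbatim.

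There is essentially no obstacle here: the entire content of the statement is packaged in Theorem \ref{t:nonsing}, and the corollary merely records the diagonal case $N=M$ together with the terminological conventions for self-objects. The only point requiring a moment's care is to confirm that the duality between the two parts is handled correctly, namely that the AB3* statement about kernels and $\mathcal{K}$-nonsingularity dualizes to the AB3 statement about images and $\mathcal{T}$-nonsingularity; but since Theorem \ref{t:nonsing} already provides both halves explicitly, this is immediate and the proof reduces to a single sentence invoking that theorem.

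\begin{proof}
Both assertions follow immediately from Theorem \ref{t:nonsing} by taking $N=M$, upon recalling that $M$ is (strongly) self-CS-Baer (respectively dual (strongly) self-CS-Baer, (strongly) self-Baer, dual (strongly) self-Baer) if and only if $M$ is (strongly) $M$-CS-Baer (respectively dual (strongly) $M$-CS-Baer, (strongly) $M$-Baer, dual (strongly) $M$-Baer).
\end{proof}
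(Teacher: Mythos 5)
Your proof is correct and matches the paper's approach: the paper states this corollary immediately after Theorem \ref{t:nonsing} with no written proof, treating it exactly as the specialization $N=M$ of that theorem together with the terminological conventions for the self-notions. Nothing further is needed.
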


Following \cite[8.1]{CLVW}, \cite[Section~4]{DHSW} and \cite{TV}, let $\mathcal{U}$ be the class of singular right $R$-modules, that is, modules isomorphic to $L/K$ for some module $L$ and essential submodule $K$ of $L$. Also, let $\mathcal{V}$ be the class of small right $R$-modules, that is, modules superfluous in some module. For any right $R$-module $M$, let $Z(M)={\rm Tr}(\mathcal{U},M)$, where
\[{\rm Tr}(\mathcal{U},M)=\sum\{{\rm Im}(f)\mid f\in \Hom(U,M) \textrm{ for some } U\in \mathcal{U}\}\] is the \emph{trace} of $\mathcal{U}$ in $M$, and let $\overline{Z}(M)={\rm Re}(M,\mathcal{V})$, where 
\[{\rm Re}(M,\mathcal{V})=\bigcap\{{\rm Ker}(f)\mid f\in \Hom(M,V) \textrm{ for some } V\in \mathcal{V}\}\] is the \emph{reject} of $\mathcal{V}$ in $M$. A right $R$-module $M$ is called \emph{non-singular} if $Z(M)=0$ and \emph{non-cosingular} if $\overline{Z}(M)=M$. 

\begin{coll} \label{c:Z} Every non-singular (strongly) self-CS-Baer right $R$-module is (strongly) self-Baer and every non-cosingular dual (strongly) self-CS-Baer right $R$-module is dual (strongly) self-Baer.
\end{coll}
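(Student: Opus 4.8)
The plan is to reduce the statement to the self case of Theorem~\ref{t:nonsing} (its specialization $M = N$) by showing that the two classical module hypotheses are in fact stronger than the relative nonsingularity conditions appearing there. Concretely, I would first establish two implications in the module category: every non-singular right $R$-module is $\mathcal{K}$-nonsingular, and every non-cosingular right $R$-module is $\mathcal{T}$-nonsingular. Once these are available, the corollary is immediate: a non-singular (strongly) self-CS-Baer module is then (strongly) self-CS-Baer and $\mathcal{K}$-nonsingular, hence (strongly) self-Baer by Theorem~\ref{t:nonsing}(1) with $M = N$; and dually, a non-cosingular dual (strongly) self-CS-Baer module is dual (strongly) self-CS-Baer and $\mathcal{T}$-nonsingular, hence dual (strongly) self-Baer by Theorem~\ref{t:nonsing}(2).

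For the first implication, I would take a module $M$ with $Z(M) = 0$ and a homomorphism $f : M \to M$ whose kernel ${\rm Ker}(f)$ is essential in $M$. The corestriction of $f$ induces an isomorphism $M/{\rm Ker}(f) \cong {\rm Im}(f)$, and since ${\rm Ker}(f)$ is essential, the quotient $M/{\rm Ker}(f)$ lies in the class $\mathcal{U}$ of singular modules. Thus ${\rm Im}(f)$ is a singular submodule of $M$, so ${\rm Im}(f) \subseteq {\rm Tr}(\mathcal{U}, M) = Z(M) = 0$ (using that the inclusion ${\rm Im}(f) \hookrightarrow M$ is a morphism from a member of $\mathcal{U}$), forcing $f = 0$. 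This is exactly $\mathcal{K}$-nonsingularity.

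For the second implication I would argue dually. Let $M$ be non-cosingular, that is $\overline{Z}(M) = {\rm Re}(M, \mathcal{V}) = M$, and let $f : M \to M$ have ${\rm Im}(f)$ superfluous in $M$. Then ${\rm Im}(f)$ is superfluous in $M$, so it belongs to the class $\mathcal{V}$ of small modules, and the corestriction $\bar{f} : M \to {\rm Im}(f)$ is a morphism into a member of $\mathcal{V}$ satisfying ${\rm Ker}(\bar{f}) = {\rm Ker}(f)$. By the definition of the reject, ${\rm Re}(M, \mathcal{V}) \subseteq {\rm Ker}(\bar{f})$, whence $M = \overline{Z}(M) \subseteq {\rm Ker}(f)$ and therefore $f = 0$. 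This is precisely $\mathcal{T}$-nonsingularity.

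I do not anticipate a genuine obstacle, since the substantive content is carried entirely by Theorem~\ref{t:nonsing}; the only care required is to match the classical module invariants $Z(M) = 0$ and $\overline{Z}(M) = M$ to the categorical nonsingularity conditions, and to keep the essential/superfluous duality consistent when passing between maps with essential kernel and maps with superfluous image. The strong versions need no separate treatment, as they are absorbed into the parenthetical ``(strongly)'' already handled uniformly in Theorem~\ref{t:nonsing}.
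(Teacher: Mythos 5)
Your proposal is correct and follows essentially the same route as the paper: the paper's proof simply notes that every non-singular module $M$ is $M$-$\mathcal{K}$-nonsingular and every non-cosingular module $M$ is $M$-$\mathcal{T}$-nonsingular, and then invokes Theorem~\ref{t:nonsing}. You have merely supplied the (correct) verifications of these two implications that the paper leaves implicit.
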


\begin{proof} Note that every non-singular right $R$-module $M$ is $M$-$\mathcal{K}$-nonsingular, and every non-cosingular right $R$-module $M$ is $M$-$\mathcal{T}$-nonsingular. Then use Theorem \ref{t:nonsing}. 
\end{proof}

\section{(Dual) relative CS-Baer versus extending (lifting)}

In this section we are interested in relating the (dual) relative CS-Baer property with the extending (lifting) property. Note that every (dual) self-Rickart object and every extending (lifting) object of an abelian category $\A$ is (dual) self-CS-Rickart by definitions. 

\begin{ex} \rm (i) We have seen that the $\mathbb{Z}$-module $\mathbb{Z}\oplus \mathbb{Z}_2$ is self-CS-Baer, but not weak duo. It is neither extending (see \cite[p. 19]{MM}), nor strongly extending.

(ii) The $\mathbb{Z}$-module $\mathbb{Q}$ is dual strongly self-CS-Baer, but it is not (strongly) lifting. Note that for any $0\neq f\in {\rm End}_{\mathbb{Z}}(\mathbb{Q})$, ${\rm Im}(f)=\mathbb{Q}$ is indecomposable.
\end{ex}

If $M$ and $N$ are objects of an abelian category $\mathcal{A}$, then $N$ is $M$-Baer and $M$-$\mathcal{K}$-cononsingular if and only if $M$ is extending and $N$ is $M$-$\mathcal{K}$-nonsingular \cite[Theorem~9.5]{CK}. In order to obtain some similar results on (dual) relative CS-Baer objects, we first recall some notation and introduce some suitable specializations of relative (co)-nonsingularity. 

For objects $M$ and $N$ of an abelian category $\mathcal{A}$, denote $U=\Hom_{\mathcal{A}}(M,N)$.

For every subobject $X$ of $M$ and every subobject $Z$ of $U$, we denote: 
\[l_U(X)=\{f\in U\mid X\subseteq {\rm Ker}(f)\}, \quad r_M(Z)=\bigcap_{f\in Z}{\rm Ker}(f).\]

For every subobject $Y$ of $N$ and every subobject $Z$ of $U$, we denote: 
\[l'_U(Y)=\{f\in U\mid {\rm Im}(f)\subseteq Y\}, \quad r'_N(Z)=\sum_{f\in Z}{\rm Im}(f).\]

\begin{defn} \rm Let $M$ and $N$ be objects of an abelian category $\mathcal{A}$. 

Then $N$ is called:
\begin{enumerate}
\item \emph{$\mathcal{E}$-$M$-$\mathcal{K}$-nonsingular} if for any morphism $f:M\to N$ in $\mathcal{A}$, ${\rm Ker}(f)$ essential in a direct summand of $M$ implies $f=0$. 
\item \emph{$\mathcal{E}$-$M$-$\mathcal{K}$-cononsingular} if for any subobjects $X$ and $Y$ of $M$ such that $X\subseteq Y$, $l_U(X)=l_U(Y) $ implies that $X$ is essential in $Y$.  
\end{enumerate}

Then $M$ is called:
\begin{enumerate}
\item \emph{$\mathcal{L}$-$N$-$\mathcal{T}$-nonsingular} if for any morphism $f:M\to N$ in $\mathcal{A}$, ${\rm Im}(f)$ lies above a direct summand of $N$ implies $f=0$. 
\item \emph{$\mathcal{L}$-$N$-$\mathcal{T}$-cononsingular} if for any subobjects $X$ and $Y$ of $N$ such that $X\subseteq Y$, $l_U'(X)=l_U'(Y)$ implies that $Y$ lies above $X$.    
\end{enumerate}
\end{defn}

\begin{rem} \rm Setting $Y=M$ in the definition of $\mathcal{E}$-$M$-$\mathcal{K}$-cononsingularity and $X=0$ in the definition of $\mathcal{L}$-$N$-$\mathcal{T}$-cononsingularity, one obtains $M$-$\mathcal{K}$-cononsingularity and $N$-$\mathcal{T}$-cononsingularity respectively in the sense of \cite[Definition~9.4]{CK}. Every $\mathcal{E}$-$M$-$\mathcal{K}$-(co)nonsingular object is $M$-$\mathcal{K}$-(co)nonsingular, while every $\mathcal{L}$-$N$-$\mathcal{T}$-(co)nonsingular object is $N$-$\mathcal{T}$-(co)nonsingular. If $N$ is an $M$-Baer object such that $\Hom_{\A}(M,N)\neq 0$, then $N$ is $M$-$\mathcal{K}$-nonsingular by Theorem \ref{t:nonsing}, but not  $\mathcal{E}$-$M$-$\mathcal{K}$-nonsingular. Dually, if $N$ is a dual $M$-Baer object such that $\Hom_{\A}(M,N)\neq 0$, then $M$ is $N$-$\mathcal{T}$-nonsingular by Theorem \ref{t:nonsing}, but not  $\mathcal{L}$-$N$-$\mathcal{T}$-nonsingular. 
\end{rem}

\begin{theo} Let $M$ and $N$ be objects of an abelian category $\mathcal{A}$. 
\begin{enumerate}
\item Assume that $\A$ is AB3*. If $N$ is (strongly) $M$-CS-Baer and $\mathcal{E}$-$M$-$\mathcal{K}$-cononsingular, then $M$ is (strongly) extending.
\item Assume that $\A$ is AB3. If $N$ is dual (strongly) $M$-CS-Baer and $M$ is $\mathcal{L}$-$N$-$\mathcal{T}$-cononsingular, then $N$ is (strongly) lifting.
\end{enumerate}
\end{theo}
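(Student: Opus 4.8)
The plan is to prove (1) directly by a double-annihilator (Galois connection) argument and then obtain (2) as its formal dual. For (1), let $X$ be an arbitrary subobject of $M$; I must produce a (fully invariant) direct summand of $M$ in which $X$ is essential. The natural candidate is built from the order-reversing pair $l_U$, $r_M$ between subobjects of $M$ and subobjects of $U=\Hom_{\A}(M,N)$. Concretely, I would form the family $Z=l_U(X)=\{f\in U\mid X\subseteq {\rm Ker}(f)\}$ and set $Y=r_M(Z)=\bigcap_{f\in Z}{\rm Ker}(f)$. Two formal consequences of the Galois connection $X\subseteq r_M(Z)\iff Z\subseteq l_U(X)$ are needed: first $X\subseteq Y$ (the unit), and second $l_U(Y)=l_U(X)$ (the triangle identity $l_U r_M l_U=l_U$). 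Both are immediate: $X\subseteq\bigcap_{f\in Z}{\rm Ker}(f)$ since every $f\in Z$ kills $X$; $l_U(Y)\subseteq l_U(X)$ follows from $X\subseteq Y$; and $l_U(X)\subseteq l_U(Y)$ holds because for each $f\in l_U(X)=Z$ one has $Y\subseteq {\rm Ker}(f)$ by definition of the intersection.

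Next I would bring in the two hypotheses. Since $N$ is (strongly) $M$-CS-Baer, applied to the family $Z=l_U(X)$, the subobject $Y=\bigcap_{f\in Z}{\rm Ker}(f)$ is essential in a (fully invariant) direct summand $D$ of $M$. Since $N$ is $\mathcal{E}$-$M$-$\mathcal{K}$-cononsingular and we have $X\subseteq Y$ with $l_U(X)=l_U(Y)$, the definition of $\mathcal{E}$-$M$-$\mathcal{K}$-cononsingularity yields that $X$ is essential in $Y$. Combining these two facts with the transitivity of essential subobjects (if $X$ is essential in $Y$ and $Y$ is essential in $D$, then $X$ is essential in $D$), I conclude that $X$ is essential in the (fully invariant) direct summand $D$. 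As $X$ was arbitrary, $M$ is (strongly) extending; the fully invariant qualifier is simply carried along from the strong CS-Baer hypothesis in the strong case.

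Finally, (2) is the exact formal dual of (1): passing to $\A^{\mathrm{op}}$ interchanges kernels with images, essential monomorphisms with superfluous epimorphisms, the $M$-CS-Baer condition with the dual $M$-CS-Baer condition, and $\mathcal{E}$-$M$-$\mathcal{K}$-cononsingularity with $\mathcal{L}$-$N$-$\mathcal{T}$-cononsingularity, so that ``extending'' becomes ``lifting''. I would therefore invoke the duality principle directly; alternatively, if an explicit argument is preferred, one runs the same steps with the monotone Galois connection $r'_N\dashv l'_U$: for a subobject $Y$ of $N$ set $X=r'_N(l'_U(Y))$, note $X\subseteq Y$ and $l'_U(X)=l'_U(Y)$ from the counit and triangle identity, use dual (strongly) $M$-CS-Baer to place $X$ above a (fully invariant) direct summand $D$ of $N$, use $\mathcal{L}$-$N$-$\mathcal{T}$-cononsingularity to show $Y$ lies above $X$, and conclude by transitivity of the ``lies above'' relation that $Y$ lies above $D$.

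I expect the only genuinely delicate points to be the transitivity of essentiality (respectively of the ``lies above'' relation) in the abelian-categorical setting, together with the clean verification of the identity $l_U(r_M(l_U(X)))=l_U(X)$; the rest is formal manipulation of the Galois connection and straightforward bookkeeping of the fully invariant qualifier in the strong case.
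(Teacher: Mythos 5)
Your proposal is correct and follows essentially the same route as the paper: form $E=r_M(l_U(L))$, use the (strongly) $M$-CS-Baer hypothesis on the family $l_U(L)$ to place $E$ essentially inside a (fully invariant) direct summand, use the triangle identity $l_U(r_M(l_U(L)))=l_U(L)$ together with $\mathcal{E}$-$M$-$\mathcal{K}$-cononsingularity to get $L$ essential in $E$, conclude by transitivity of essentiality, and dualize for part (2). The only difference is that you apply cononsingularity directly to the pair $L\subseteq E$, whereas the paper applies it to $L+K'\subseteq E+K'$ (with $K'$ a complement of the summand) and then descends via a modularity computation; your shortcut is legitimate because the $\mathcal{E}$-version of cononsingularity is stated for arbitrary nested pairs of subobjects.
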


\begin{proof} (1) Recal that we denote $U=\Hom_{\mathcal{A}}(M,N)$. Let $L$ be a subobject of $M$ and denote 
$$E=r_M(l_U(L))=\bigcap\{{\rm Ker}(f)| f\in l_U(L)\}=\bigcap\{{\rm Ker}(f)| L\subseteq {\rm Ker}(f), f\in U\}.$$ Then we have $L\subseteq E\subseteq M$. Since $N$ is (strongly) $M$-CS-Baer, $E$ is essential in a (fully invariant) direct summand $K$ of $M$. Write $M=K\oplus K'$ for some subobject $K'$ of $M$. We only need to show that $L$ is essential in $E$. To do this, we consider a subobject $X$ of $E$ such that $X\cap L=0$. By \cite[Theorem~9.1 and Lemma~9.2]{CK} we deduce:
\[l_U(L+K')=l_U(L)\cap l_U(K')=l_U(r_M(l_U(L)))\cap l_U(K')=l_U(r_M(l_U(L))+K')=l_U(E+K').\]
Now, since $N$ is $\mathcal{E}$-$M$-$\mathcal{K}$-cononsingular, it follows that $L+K'$ is essential in $E+K'$. Using the modularity condition, we have:
\[X\cap (L+K')=X\cap [K\cap (L+K')]=X\cap [L+(K\cap K')]=X\cap L=0,\]
whence we deduce that $X=0$. This shows that $L$ is essential in $E$, and consequently, $M$ is (strongly) extending.  
\end{proof}

\begin{coll} \label{c:nonsing} Let $M$ be an object of an abelian category $\mathcal{A}$. 
\begin{enumerate}
\item Assume that $\A$ is AB3*. If $M$ is (strongly) self-CS-Baer and $\mathcal{E}$-$M$-$\mathcal{K}$-cononsingular, then $M$ is (strongly) extending.
\item Assume that $\A$ is AB3. If $M$ is dual (strongly) self-CS-Baer and $\mathcal{L}$-$M$-$\mathcal{T}$-cononsingular, then $M$ is (strongly) lifting.
\end{enumerate}
\end{coll}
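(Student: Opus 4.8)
The plan is to obtain this corollary as the immediate specialization $N=M$ of the preceding Theorem, which already handles the general relative versions of both implications. First I would observe that, by the definition of relative CS-Baer objects, $M$ being (strongly) self-CS-Baer is precisely the statement that $M$ is (strongly) $M$-CS-Baer, and dually that $M$ being dual (strongly) self-CS-Baer means $M$ is dual (strongly) $M$-CS-Baer. Thus the CS-Baer hypotheses of the corollary are exactly the CS-Baer hypotheses of the Theorem read with $N=M$.

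Next I would check that the relative (co)nonsingularity hypotheses match under the same substitution. For part (1), the property $\mathcal{E}$-$M$-$\mathcal{K}$-cononsingular attributed to $M$ in the corollary is literally the hypothesis ``$N$ is $\mathcal{E}$-$M$-$\mathcal{K}$-cononsingular'' of Theorem part (1) with $N=M$. For part (2), the property $\mathcal{L}$-$M$-$\mathcal{T}$-cononsingular attributed to $M$ is exactly the hypothesis ``$M$ is $\mathcal{L}$-$N$-$\mathcal{T}$-cononsingular'' of Theorem part (2) with $N=M$. So in both cases the full list of hypotheses of the Theorem is satisfied at $N=M$.

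With both sets of hypotheses verified, applying Theorem part (1) (in the AB3* case) yields that $M$ is (strongly) extending, and applying Theorem part (2) (in the AB3 case) yields that $N=M$ is (strongly) lifting; these are precisely the two desired conclusions.

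There is essentially no genuine obstacle here: the entire mathematical content resides in the preceding Theorem, and the corollary is simply its diagonal case $N=M$. The only point requiring care is the bookkeeping that the ``self'' terminology and the relative (co)nonsingularity notions coincide with their $N=M$ instances, which is immediate from the definitions recalled earlier in this section. Accordingly, I expect the proof to consist of a single sentence invoking the Theorem, with no calculation involved.
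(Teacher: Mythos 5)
Your proposal is correct and matches the paper's (implicit) approach exactly: the corollary is stated without proof precisely because it is the diagonal case $N=M$ of the preceding theorem, and your bookkeeping of the hypotheses under that substitution is accurate.
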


\begin{theo} \label{t:khuri} Let $M$ and $N$ be objects of an abelian category $\mathcal{A}$. 
\begin{enumerate}
\item Assume that $\A$ is AB3* and any direct summand of $M$ is isomorphic to a subobject of $N$. If $M$ is (strongly) extending and $N$ is $\mathcal{E}$-$M$-$\mathcal{K}$-nonsingular, then $N$ is (strongly) $M$-CS-Baer and $\mathcal{E}$-$M$-$\mathcal{K}$-cononsingular.
\item Assume that $\A$ is AB3 and any direct summand of $N$ is isomorphic to a factor object of $M$. If $N$ is (strongly) lifting and $M$ is $\mathcal{L}$-$N$-$\mathcal{T}$-nonsingular, then $N$ is dual (strongly) $M$-CS-Baer and $M$ is $\mathcal{L}$-$N$-$\mathcal{T}$-cononsingular.
\end{enumerate}
\end{theo}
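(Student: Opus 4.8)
The plan is to prove part (1) only and to obtain part (2) for free by the duality principle in abelian categories: reversing all arrows interchanges AB3* with AB3, kernels with images, essential monomorphisms with superfluous epimorphisms, subobjects with factor objects, the operators $l_U,r_M$ with $l'_U,r'_N$, and the $\mathcal{K}$-notions with the $\mathcal{T}$-notions, so that ``extending'' becomes ``lifting'' and ``$\mathcal{E}$-$M$-$\mathcal{K}$-(co)nonsingular'' becomes ``$\mathcal{L}$-$N$-$\mathcal{T}$-(co)nonsingular'', yielding exactly the dual statement.

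For part (1) I would establish the two conclusions separately. The (strongly) $M$-CS-Baer property comes essentially for free from the extending hypothesis: for any family $(f_i)_{i\in I}$ of morphisms $f_i:M\to N$, the subobject $\bigcap_{i\in I}{\rm Ker}(f_i)$ of $M$ is, by the assumption that $M$ is (strongly) extending, essential in a (fully invariant) direct summand of $M$, which is precisely the defining condition for $N$ to be (strongly) $M$-CS-Baer. In the strong case the containing summand is automatically fully invariant, so no extra work is needed, and the cononsingularity notion carries no separate strong version.

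The real content lies in the $\mathcal{E}$-$M$-$\mathcal{K}$-cononsingularity, where I would use the embedding hypothesis to manufacture a suitable test morphism. Let $X\subseteq Y$ be subobjects of $M$ with $l_U(X)=l_U(Y)$, where $U=\Hom_{\mathcal{A}}(M,N)$; the goal is to show that $X$ is essential in $Y$. Since $M$ is extending, $X$ is essential in some direct summand $D$ of $M$; write $M=D\oplus D'$ and let $\pi:M\to D'$ be the projection, so that ${\rm Ker}(\pi)=D$. Because $D'$ is a direct summand of $M$, the hypothesis yields a monomorphism $g:D'\to N$, and then the composite $f:=g\pi:M\to N$ has ${\rm Ker}(f)={\rm Ker}(\pi)=D$ since $g$ is monic. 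As $X\subseteq D={\rm Ker}(f)$ we get $f\in l_U(X)=l_U(Y)$, hence $Y\subseteq {\rm Ker}(f)=D$; and since $X$ is essential in $D$ with $X\subseteq Y\subseteq D$, essentiality descends to give that $X$ is essential in $Y$, as required.

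The one genuinely delicate point is the construction of $f$: one needs a morphism $M\to N$ whose kernel is exactly the summand $D$ in which $X$ sits essentially, and it is the standing assumption that every direct summand of $M$ embeds into $N$ that supplies it, through the complementary projection followed by the embedding. Everything afterwards is a formal manipulation of the pairing $l_U$. I would also note that this implication appears to go through without actually invoking the $\mathcal{E}$-$M$-$\mathcal{K}$-nonsingularity hypothesis; that assumption is presumably kept in order to pair the theorem neatly with its converse, and it would be worth confirming against the intended argument whether it is genuinely needed here.
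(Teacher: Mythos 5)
Your proof is correct, but your argument for $\mathcal{E}$-$M$-$\mathcal{K}$-cononsingularity takes a genuinely different route from the paper's. Both proofs build the same test morphism: write $M=K\oplus K'$ with $X$ essential in $K$, take the projection $p:M\to K'$ and an embedding $i:K'\to N$ supplied by the standing hypothesis, and set $f=ip$, so that $\mathrm{Ker}(f)=K$. From there the paper observes that $\mathrm{Ker}(f)$ is essential in a direct summand of $M$ and invokes $\mathcal{E}$-$M$-$\mathcal{K}$-nonsingularity to force $f=0$, hence $K'=0$; thus $X$ is essential in all of $M$ and a fortiori in $Y$ --- the pairing condition $l_U(X)=l_U(Y)$ is never actually used. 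You instead keep $f$ alive, note that $f\in l_U(X)=l_U(Y)$ gives $Y\subseteq\mathrm{Ker}(f)=K$, and restrict the essentiality of $X$ in $K$ to the intermediate subobject $Y$. Your closing suspicion is justified: your route never touches the nonsingularity hypothesis, so you in fact prove the formally stronger statement that the (strongly) extending property together with the embedding hypothesis already yields both conclusions. It is worth noting what each approach buys: since $M$ is extending, every $f:M\to N$ has kernel essential in a direct summand, so the paper's $\mathcal{E}$-$M$-$\mathcal{K}$-nonsingularity hypothesis forces $\Hom_{\mathcal{A}}(M,N)=0$ and its conclusion (every subobject essential in $M$) is correspondingly degenerate, whereas your argument retains content in the nondegenerate setting. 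The CS-Baer part (immediate from extending) and the appeal to duality for part (2) coincide with the paper.
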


\begin{proof} (1) Assume that $M$ is (strongly) extending and $N$ is $\mathcal{E}$-$M$-$\mathcal{K}$-nonsingular. Then clearly $N$ is (strongly) $M$-CS-Baer. In order to prove that $N$ is $\mathcal{E}$-$M$-$\mathcal{K}$-cononsingular, we consider subobjects $X$ and $Y$ of $M$ such that $X\subseteq Y$ and  $l_U(X)=l_U(Y)$. Since $M$ is (strongly) extending, $X$ is essential in a (fully invariant) direct summand $K$ of $M$, where $M=K\oplus K'$ for some subobject $K'$ of $M$. Denote by $p:M\to K'$ the projection of $M$ onto $K'$, let $i:K'\to N$ be a monomorphism and consider $f=ip:M\to N$. Since $M$ is extending, ${\rm Ker}(f)=K$ is essential in a direct summand of $M$. But $N$ is $\mathcal{E}$-$M$-$\mathcal{K}$-nonsingular, whence we get $f=0$. Now $X$ is esssential in $M$, and consequently, $X$ is essential in $Y$. Hence $N$ is $\mathcal{E}$-$M$-$\mathcal{K}$-cononsingular.
\end{proof}

\begin{coll} Let $M$ be an object of an abelian category $\mathcal{A}$. 
\begin{enumerate}
\item Assume that $\A$ is AB3*. If $M$ is (strongly) extending and $\mathcal{E}$-$M$-$\mathcal{K}$-nonsingular, then $M$ is (strongly) self-CS-Baer and $\mathcal{E}$-$M$-$\mathcal{K}$-cononsingular.
\item Assume that $\A$ is AB3. If $M$ is (strongly) lifting and $\mathcal{L}$-$M$-$\mathcal{T}$-nonsingular, then $M$ is dual (strongly) self-CS-Baer and $\mathcal{L}$-$M$-$\mathcal{T}$-cononsingular.
\end{enumerate}
\end{coll}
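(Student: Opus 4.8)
The plan is to obtain both parts as the special case $N=M$ of Theorem \ref{t:khuri}, so that the entire argument reduces to checking that the two auxiliary hypotheses of that theorem hold automatically once $M$ and $N$ coincide. Nothing beyond that theorem is needed.

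For part (1), Theorem \ref{t:khuri}(1) carries the extra requirement that every direct summand of $M$ be isomorphic to a subobject of $N$. With $N=M$ this is immediate: a direct summand $K$ of $M$ is in particular a subobject of $M=N$, hence is isomorphic (via the identity) to a subobject of $N$. The remaining hypotheses of the theorem then read verbatim as the hypotheses of the corollary, namely that $M$ is (strongly) extending and that $N=M$ is $\mathcal{E}$-$M$-$\mathcal{K}$-nonsingular. Invoking Theorem \ref{t:khuri}(1) therefore yields that $N=M$ is (strongly) $M$-CS-Baer and $\mathcal{E}$-$M$-$\mathcal{K}$-cononsingular; unwinding the definitions with $N=M$, this is precisely the assertion that $M$ is (strongly) self-CS-Baer and $\mathcal{E}$-$M$-$\mathcal{K}$-cononsingular.

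For part (2) I would argue dually. Theorem \ref{t:khuri}(2) requires that every direct summand of $N$ be isomorphic to a factor object of $M$; with $N=M$, a direct summand $K$ of $M$ admits a complement $K'$ with $M=K\oplus K'$, so $K\cong M/K'$ and $K$ is isomorphic to a factor object of $M$. Thus this auxiliary hypothesis is again vacuous, and the surviving hypotheses of Theorem \ref{t:khuri}(2) are exactly that $N=M$ is (strongly) lifting and $M$ is $\mathcal{L}$-$N$-$\mathcal{T}$-nonsingular, i.e. that $M$ is (strongly) lifting and $\mathcal{L}$-$M$-$\mathcal{T}$-nonsingular. The conclusion of the theorem then gives that $N=M$ is dual (strongly) $M$-CS-Baer and $M$ is $\mathcal{L}$-$N$-$\mathcal{T}$-cononsingular, which with $N=M$ says precisely that $M$ is dual (strongly) self-CS-Baer and $\mathcal{L}$-$M$-$\mathcal{T}$-cononsingular.

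Since all the substance is already contained in Theorem \ref{t:khuri}, there is no genuine obstacle here. The only point deserving explicit attention is the verification that the direct-summand conditions degenerate to triviality when $N=M$, which is the routine observation made above; I would state this explicitly so that the reduction to the theorem is airtight.
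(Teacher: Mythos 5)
Your proposal is correct and is exactly the argument the paper intends: the corollary is stated without proof immediately after Theorem \ref{t:khuri} precisely because it is the specialization $N=M$, and your verification that the two direct-summand hypotheses become vacuous in that case is the only point needing checking.
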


Now we may deduce the following known result on relative Baer objects, which is part of \cite[Theorem~9.5]{CK} (\cite[Theorem~5.17]{CO2}).

\begin{coll} Let $M$ and $N$ be objects of an abelian category $\mathcal{A}$. 
\begin{enumerate}
\item Assume that $\A$ is AB3* and any direct summand of $M$ is isomorphic to a subobject of $N$. If $M$ is (strongly) extending and $N$ is $M$-$\mathcal{K}$-nonsingular, then $N$ is (strongly) $M$-Baer and $M$-$\mathcal{K}$-cononsingular.
\item Assume that $\A$ is AB3 and any direct summand of $N$ is isomorphic to a factor object of $M$. If $N$ is (strongly) lifting and $M$ is $N$-$\mathcal{T}$-nonsingular, then $N$ is dual (strongly) $M$-Baer and $M$ is $N$-$\mathcal{T}$-cononsingular.
\end{enumerate}
\end{coll}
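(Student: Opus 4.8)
The plan is to prove part (1) in detail and to obtain part (2) for free by the duality principle in the AB3 setting. For part (1), I would split the conclusion into its two halves---that $N$ is (strongly) $M$-Baer and that $N$ is $M$-$\mathcal{K}$-cononsingular---and treat each separately, reusing the machinery already set up rather than reproving anything from scratch.

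First I would dispose of the (strongly) $M$-Baer half through the CS-Baer bridge. The key observation is that (strongly) extending objects make the CS-Baer condition automatic: if $M$ is (strongly) extending, then every subobject of $M$ is essential in a (fully invariant) direct summand, so in particular $\bigcap_{i\in I}{\rm Ker}(f_i)$ is essential in a (fully invariant) direct summand of $M$ for every family $(f_i)_{i\in I}$ of morphisms $f_i:M\to N$. Hence $N$ is (strongly) $M$-CS-Baer. Since $N$ is moreover assumed $M$-$\mathcal{K}$-nonsingular, Theorem~\ref{t:nonsing}(1) immediately upgrades this to $N$ being (strongly) $M$-Baer.

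Next I would establish $M$-$\mathcal{K}$-cononsingularity by essentially the cononsingularity argument from the proof of Theorem~\ref{t:khuri}, specialised to $Y=M$. Let $X$ be a subobject of $M$ with $l_U(X)=l_U(M)$, where $U=\Hom_{\mathcal{A}}(M,N)$; note that $l_U(M)=0$, so the hypothesis reads $l_U(X)=0$. Since $M$ is extending, $X$ is essential in a direct summand $K$ of $M$; write $M=K\oplus K'$. Using the hypothesis that every direct summand of $M$ is isomorphic to a subobject of $N$, choose a monomorphism $i:K'\to N$, let $p:M\to K'$ be the projection, and set $f=ip:M\to N$. Then ${\rm Ker}(f)={\rm Ker}(p)=K\supseteq X$, so $f\in l_U(X)=0$; as $i$ is monic this forces $p=0$, hence $K'=0$ and $X$ is essential in $M$. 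Thus $N$ is $M$-$\mathcal{K}$-cononsingular.

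The only genuinely delicate point is recognising that the plain $M$-$\mathcal{K}$-nonsingular hypothesis here suffices, even though the analogous Theorem~\ref{t:khuri} is phrased with the strictly stronger $\mathcal{E}$-versions: the cononsingular conclusion weakens accordingly (it is obtained by setting $Y=M$), and the manufactured test morphism $f$ meets the kernel condition purely from $l_U(X)=0$, without appealing to nonsingularity at all. Everything else is a routine combination of Theorem~\ref{t:nonsing} with the observation that extending objects are CS-Baer, and part (2) is the term-by-term dual, obtained by replacing kernels, essential subobjects and $\mathcal{K}$-(co)nonsingularity with images, superfluous subobjects and $\mathcal{T}$-(co)nonsingularity.
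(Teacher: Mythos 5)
Your proof is correct and follows essentially the same route as the paper, which simply says to repeat the proof of Theorem~\ref{t:khuri} with $Y=M$ and invoke Theorem~\ref{t:nonsing}. You also correctly identify the one point the paper leaves implicit: with $Y=M$ the test morphism $f=ip$ lies in $l_U(X)=l_U(M)=0$, so $f=0$ follows without the $\mathcal{E}$-version of nonsingularity, which is exactly why the weaker hypothesis suffices here.
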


\begin{proof} Use the same proof as for Theorem \ref{t:khuri} with $Y=M$ together with Theorem \ref{t:nonsing}.
\end{proof}

We continue with the following module-theoretic result, which also shows that the dual (strong) self-CS-Baer property is left-right symmetric for rings. 

\begin{prop} \label{p:semiperfect} Let $R$ be a unitary ring. The following are equivalent: 
\begin{enumerate}[(i)]
\item $R$ is a dual (strongly) self-CS-Baer right $R$-module.
\item $R$ is a (strongly) lifting right $R$-module.
\item $R$ is an (abelian) semiperfect ring.  
\end{enumerate}
\end{prop}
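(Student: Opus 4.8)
The plan is to prove the equivalences $(i)\Leftrightarrow(ii)\Leftrightarrow(iii)$ for the right $R$-module $R$, handling the non-strong and strong versions in parallel. The key observation that makes everything tractable is that $R$ as a right $R$-module is projective and cyclic, so its endomorphism ring is $R$ itself (acting by left multiplication), and every factor object of $R$ is a homomorphic image of $R$. This removes the two isomorphism hypotheses appearing in Theorem~\ref{t:khuri} and its corollary, since any direct summand of $R$ is automatically (isomorphic to) a factor object of $R$.

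For $(ii)\Rightarrow(i)$, I would argue directly from the definition of dual (strongly) self-CS-Baer. Given a family $(f_i)_{i\in I}$ of endomorphisms of $R$, I must show that $\sum_{i\in I}{\rm Im}(f_i)$ lies above a (fully invariant) direct summand of $R$. Since $R$ is (strongly) lifting, \emph{every} subobject of $R$ lies above a (fully invariant) direct summand of $R$ by definition; applying this to the subobject $\sum_{i\in I}{\rm Im}(f_i)$ gives exactly what is needed. Thus this implication is essentially immediate from the definitions and requires no isomorphism hypothesis. For $(i)\Rightarrow(ii)$, the natural route is through Corollary~\ref{c:nonsing}(2): it suffices to show that the module $R$ is $\mathcal{L}$-$R$-$\mathcal{T}$-cononsingular, after which dual (strongly) self-CS-Baer forces (strongly) lifting. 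Alternatively, and perhaps more cleanly, I would note that for the cyclic projective generator $R$ the relative $\mathcal{T}$-cononsingularity conditions are automatic, because every subobject of $R$ is the image of some endomorphism (a principal right ideal $aR$ is the image of $t_a$), so the reject data $l'_U(X)$ determines $X$ precisely enough to verify the cononsingularity condition directly.

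The equivalence $(ii)\Leftrightarrow(iii)$ is the module-theoretic heart and is where I expect to lean on established theory rather than the abelian-category machinery. The statement that $R$ is a (right) lifting module if and only if $R$ is semiperfect is classical (see the standard references on lifting modules such as \cite{CLVW}); the point is that $R$ being lifting over itself means $R_R$ is a direct sum of local modules lifting idempotents modulo the radical, which is precisely semiperfectness, and this is manifestly left-right symmetric. For the strong version, I would invoke Corollary~\ref{st00}: $R$ is strongly lifting (equivalently dual strongly self-CS-Baer) if and only if it is lifting and weak duo, equivalently lifting with ${\rm End}_R(R)\cong R$ abelian, which translates into $R$ being an abelian semiperfect ring. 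The left-right symmetry claimed in the preamble then follows because the characterization ``(abelian) semiperfect'' is a two-sided ring condition.

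The main obstacle I anticipate is the direction $(i)\Rightarrow(ii)$, specifically verifying the $\mathcal{L}$-$R$-$\mathcal{T}$-cononsingularity hypothesis needed to invoke Corollary~\ref{c:nonsing}(2); one must be careful that the newly introduced strengthened cononsingularity condition genuinely holds for $R_R$ rather than merely the weaker $R$-$\mathcal{T}$-cononsingularity. I would resolve this by exploiting that every principal right ideal arises as an image, so that the equality $l'_U(X)=l'_U(Y)$ of left annihilator-type subobjects, for $X\subseteq Y$ subobjects of $R$, can be unwound into a concrete statement forcing $Y$ to lie above $X$. A secondary subtlety is keeping the parenthetical ``(strongly)'' and ``(abelian)'' qualifiers synchronized across all three conditions, which is handled uniformly by Corollary~\ref{st00} and the weak duo characterization, so no separate argument for the strong case is needed beyond adjoining the weak duo/abelian-endomorphism-ring clause throughout.
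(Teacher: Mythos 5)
Your proposal is correct, and two of the three implications match the paper exactly: $(ii)\Rightarrow(i)$ is immediate from the definitions just as you say (the paper dismisses it as clear), and $(ii)\Leftrightarrow(iii)$ is delegated to the classical lifting/semiperfect equivalence (the paper cites \cite[Corollary~4.42]{MM}) with the strong/abelian layer handled by the weak-duo translation. Where you diverge is $(i)\Rightarrow(ii)$. The paper argues directly: for a right ideal $I$, each $a\in I$ gives $\varphi_a\colon r\mapsto ar$ with ${\rm Im}(\varphi_a)=aR$, so $I=\sum_{a\in I}{\rm Im}(\varphi_a)$ is a sum of images of endomorphisms and the dual (strongly) self-CS-Baer hypothesis applies to it verbatim --- two lines, no auxiliary machinery. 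You instead route through Corollary~\ref{c:nonsing}(2), which obliges you to verify that $R_R$ is $\mathcal{L}$-$R$-$\mathcal{T}$-cononsingular; this does work, and trivially so, since $l_U'(X)=\{\varphi_a\mid a\in X\}$ for a right ideal $X$, hence $l_U'(X)=l_U'(Y)$ forces $X=Y$. But notice that the observation you use there (principal right ideals are images of left multiplications) is exactly the one that powers the paper's direct argument once you pass from a single image to a sum of images; your ``alternatively, more cleanly'' remark is one step away from the paper's proof without quite assembling it. The direct route is preferable because it needs nothing beyond the definition, whereas yours leans on a theorem whose hypotheses must be checked; on the other hand, your check makes explicit why the cononsingularity conditions are vacuous for the regular module, which is a mildly informative byproduct. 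One small inaccuracy: ``every subobject of $R$ is the image of some endomorphism'' is false for non-principal right ideals (they are only \emph{sums} of images), though your parenthetical restriction to principal ideals is all your argument actually uses.
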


\begin{proof} (i)$\Rightarrow$(ii) Let $I$ be a right ideal of $R$. Then for all $a\in R$, consider the $R$-homomorphisms $\varphi_a:R\to R$ defined by $\varphi_a(r)=ar$. Note that ${\rm Im}(\varphi_a)=aR$ for all $a\in I$, and $\sum_{a\in I} {\rm Im}(\varphi_a)=I$. Since $R$ is dual (strongly) self-CS-Baer, there exists a (fully invariant) direct summand $K$ of $R$ such that $I$ lies above $K$. Hence $R$ is (strongly) lifting.

(ii)$\Rightarrow$(i) This is clear.

(ii)$\Leftrightarrow$(iii) This follows by \cite[Corollary~4.42]{MM} (and the fact that $R$ is weak duo if and only if $R$ is abelian).
\end{proof}

We also give a related characterization for dual (strongly) self-CS-Rickart rings, which shows that the dual (strong) self-CS-Rickart property is left-right symmetric for rings (see \cite[Proposition~2.12]{Tribak} for a different proof). Recall that a unitary ring $R$ is called \emph{semiregular} if every left (right) ideal of $R$ lies above a direct summand. 

\begin{prop} Let $R$ be a unitary ring. Then $R$ is a dual (strongly) self-CS-Rickart right $R$-module if and only if $R$ is an (abelian) semiregular ring. 
\end{prop}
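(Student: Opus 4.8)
The plan is to reduce the module-theoretic condition to a purely ring-theoretic one by using the well-known description of the endomorphisms of $R$ as a right $R$-module. As in the proof of Proposition \ref{p:semiperfect}, every such endomorphism is of the form $\varphi_a:R\to R$, $\varphi_a(r)=ar$, for some $a\in R$, and ${\rm Im}(\varphi_a)=aR$. Since taking a single-element index set in the definition of dual self-CS-Baer yields precisely the dual self-CS-Rickart notion, $R$ is a dual self-CS-Rickart right $R$-module if and only if for every $a\in R$ the principal right ideal $aR$ lies above a direct summand of $R$.

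For the non-strong equivalence, I would then invoke the standard characterization of semiregular rings: $R$ is semiregular precisely when every principal (equivalently, every finitely generated) right ideal lies above a direct summand, a condition which is left-right symmetric. Thus ``every $aR$ lies above a direct summand of $R$'' is exactly the assertion that $R$ is semiregular, which settles the equivalence of (i) and (iii) without the strong adjective, and simultaneously records the left-right symmetry asserted in the statement.

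For the strong version, I would argue as in Corollary \ref{st00}: by the dual of \cite[Proposition~2.8]{CR3} together with \cite[Corollary~2.6]{CR3}, $R$ is a dual strongly self-CS-Rickart right $R$-module if and only if $R$ is dual self-CS-Rickart and weak duo, equivalently dual self-CS-Rickart and ${\rm End}_R(R)\cong R$ abelian. Combining this with the non-strong equivalence above and the fact (already used in Proposition \ref{p:semiperfect}) that $R$ is weak duo if and only if $R$ is an abelian ring, one obtains that $R$ is dual strongly self-CS-Rickart if and only if $R$ is an abelian semiregular ring.

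The main obstacle is the ring-theoretic input in the middle step: matching the condition ``every principal right ideal lies above a direct summand'' with the definition of a semiregular ring, and in particular justifying its left-right symmetry. This is precisely Nicholson's characterization of semiregular rings via $R/J(R)$ being regular and idempotents lifting modulo $J(R)$, so the effort lies in citing and applying that theory correctly rather than in any categorical argument; the categorical side reduces cleanly once the endomorphisms of $R$ are identified with left multiplications.
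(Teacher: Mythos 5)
Your proposal is correct and follows essentially the same route as the paper: identify endomorphisms of $R_R$ with left multiplications $\varphi_a$ so that the dual self-CS-Rickart condition becomes ``every principal right ideal $aR$ lies above a direct summand,'' match this with the (left-right symmetric) characterization of semiregular rings, and handle the strong version via the weak duo/abelian endomorphism ring equivalence. The only cosmetic difference is that the paper takes the lies-above-a-direct-summand condition as its working definition of semiregular rather than explicitly routing through Nicholson's characterization, so the middle step you flag as the main obstacle is dispatched by a definition rather than a cited theorem.
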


\begin{proof} Assume first that $R$ is a dual (strongly) self-CS-Rickart right $R$-module. Let $a\in R$, and consider the right ideal $I=aR$ of $R$ and the $R$-homomorphism $\varphi_a:R\to R$ defined by $\varphi_a(r)=ar$. Since $R$ is dual (strongly) self-CS-Rickart, there exists a (fully invariant) direct summand $K$ of $R$ such that $I=aR={\rm Im}(\varphi_a)$ lies above $K$ (and $R$ is weak duo). Hence $R$ is an (abelian) semiregular ring.

Conversely, assume that $R$ is an (abelian) semiregular ring. Let $f:R\to R$ be an $R$-homomorphism, and denote $a=f(1)$. Since $R$ is an (abelian) semiregular ring, $aR={\rm Im}(f)$ lies above a direct summand of $R$ (and $R$ is weak duo). Hence $R$ is a dual (strongly) self-CS-Rickart right $R$-module.
\end{proof}

\begin{rem} \rm Since there are semiregular rings that are not semiperfect, a projective dual (strongly) self-CS-Rickart module need not be dual (strongly) self-CS-Baer. 
\end{rem}

\begin{prop} Let $R$ be a commutative unitary ring. Then every cyclic dual (strongly) self-CS-Baer $R$-module is (strongly) lifting. 
\end{prop}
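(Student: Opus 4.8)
The plan is to reduce this statement to the ring-theoretic characterization established in Proposition~\ref{p:semiperfect}. Since $M$ is a cyclic $R$-module, we may write $M \cong R/I$ for some ideal $I$ of $R$, and we set $S = R/I$, a commutative unitary ring. The key observation is that, because $R$ is commutative, the $R$-action on $M$ factors through the quotient map $R \to S$, so that $M$ may equally be regarded as the right regular $S$-module $S_S$. Under this identification the $R$-submodules of $M$ are exactly the $S$-submodules of $S_S$ (i.e.\ the ideals of $S$), and one checks that $\End_R(M) \cong S \cong \End_S(S_S)$, with every $R$-endomorphism of $M$ given by multiplication by an element of $S$.

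First I would spell out that the lattice of subobjects, the notion of direct summand, the superfluous submodules, and---crucially for the ``strongly'' case---the fully invariant submodules of $M$ all coincide whether $M$ is viewed over $R$ or over $S$. Since the defining data of the dual (strongly) self-CS-Baer property (images of endomorphisms, their sums, and lying above a (fully invariant) direct summand) are expressed entirely in terms of this shared structure, it follows that $M$ is dual (strongly) self-CS-Baer as an $R$-module if and only if $S$ is dual (strongly) self-CS-Baer as a right $S$-module.

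Next I would invoke Proposition~\ref{p:semiperfect} applied to the ring $S$: the right $S$-module $S$ is dual (strongly) self-CS-Baer if and only if it is (strongly) lifting. Transferring back along the identification $M = S_S$, and using once more that submodules, direct summands and superfluous submodules agree over $R$ and over $S$, we conclude that $M$ is (strongly) lifting as an $R$-module, as required.

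The main obstacle is the bookkeeping in the second paragraph: one must verify carefully that passing between the $R$-module and $S$-module structures preserves all the relevant notions, in particular that fully invariant direct summands match up (this rests on the isomorphism $\End_R(M) \cong \End_S(S_S)$), so that the argument covers the strong version simultaneously with the ordinary one. This verification is routine, but it is exactly the point where commutativity of $R$ is used, since it guarantees both that the endomorphism ring of $R/I$ is $R/I$ itself and that the two module structures have identical subobject lattices.
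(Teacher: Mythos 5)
Your proposal is correct, but it proceeds differently from the paper. The paper argues directly on $M=yR$: given a submodule $N$, each $x\in N$ is the image of the well-defined endomorphism $\varphi_x:M\to M$, $\varphi_x(yt)=xt$ (well-definedness is where commutativity enters), so $N=\sum_{x\in N}{\rm Im}(\varphi_x)$ and the dual (strongly) self-CS-Baer hypothesis immediately puts $N$ above a (fully invariant) direct summand. You instead perform a change of rings: writing $M\cong R/I$ with $I={\rm Ann}_R(y)$ and $S=R/I$, you identify $M$ with the regular module $S_S$, check that subobject lattices, direct summands, superfluous submodules, endomorphisms (both rings of endomorphisms being multiplication by elements of $S$) and hence fully invariant direct summands all transfer, and then invoke Proposition~\ref{p:semiperfect} for the ring $S$. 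Both arguments rest on the same mechanism --- every submodule is a sum of images of multiplication endomorphisms --- since that is exactly how (i)$\Rightarrow$(ii) of Proposition~\ref{p:semiperfect} is proved; your version makes the logical dependence on that proposition explicit and, as a bonus, yields the sharper conclusion that a cyclic $R$-module is dual (strongly) self-CS-Baer \emph{if and only if} it is (strongly) lifting, if and only if $R/{\rm Ann}_R(M)$ is (abelian) semiperfect, whereas the paper's direct computation is shorter and avoids the transfer bookkeeping. The one point you should not leave implicit is the identification $\End_R(M)\cong S$ acting by multiplications, since the strong case hinges on fully invariant summands matching up; but as you note this is routine for a cyclic module over a commutative ring.
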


\begin{proof} Let $M=yR$, and let $N$ be a submodule of $M$. Then $N=\sum_{x\in N}xR$. Now let $x\in N$. Then $x=yr$ for some $r\in R$. Consider the $R$-homomorphism $\varphi_x:M\to M$ defined by $\varphi_x(yt)=xt$ for all $t\in R$. Here ${\rm Im}(\varphi_x)=xR$, hence $N=\sum_{x\in N}{\rm Im}(\varphi_x)$. Since $M$ is dual (strongly) self-CS-Baer, $N$ lies above a (fully invariant) direct summand of $M$. Hence $M$ is (strongly) lifting.
\end{proof}

\section{(Dual) relative CS-Baer versus ESSIP (LSSSP)}

We shall see in the next section that CS-Baer and CS-Rickart objects may be related by means of some conditions involving direct summands. In order to get there we need now to prepare the setting.

In the study of (strongly) self-CS-Rickart objects, it is useful to consider the following concepts generalizing SIP (SSIP) and SSP (SSSP). SIP-extending (SSIP-extending), strictly SIP-extending (strictly SSIP-extending) objects and their duals in abelian categories, called SSP-lifting (SSSP-lifting), strictly SSP-lifting (strictly SSSP-lifting), were considered in \cite{CR1,CR3}.

\begin{defn} \rm An object $M$ of an abelian category $\A$ with AB3* is called:
\begin{enumerate}
\item \emph{SIP-extending} (\emph{SSIP-extending}) if for any two (family of) subobjects of $M$ which are essential in direct summands of $M$, their intersection is essential in a direct summand of $M$.
\item \emph{strictly SIP-extending} (\emph{strictly SSIP-extending}) if for any two (family of) subobjects of $M$ which are essential in direct summands of $M$, their intersection is essential in a fully invariant direct summand of $M$.
\item \emph{ESIP} (\emph{ESSIP}) if for any two (family of) direct summands of $M$, their intersection is essential in a direct summand of $M$.
\item \emph{strictly ESIP} (\emph{strictly ESSIP}) if for any two (family of) direct summands of $M$, their intersection is essential in a fully invariant direct summand of $M$.
\end{enumerate}
We leave to the reader the definition of the dual concepts, which are called \emph{SSP-lifting (SSSP-lifting)}, \emph{strictly SSP-lifting (strictly SSSP-lifting)}, \emph{LSSP (LSSSP)} and \emph{strictly LSSP (strictly LSSSP)} respectively.  
\end{defn}

\begin{rem} \rm (i) Every (strongly) self-CS-Rickart object is (strictly) SIP-extending \cite[Corollary~3.6]{CR1} (\cite[Corollary~3.6]{CR3}). 

(ii) An object is (strictly) SIP-extending if and only if it is (strictly) ESIP \cite[Lemma ~3.4]{CR1} (\cite[Lemma ~3.4]{CR3}). 

(iii) Every (strictly) SSIP-extending object is (strictly) ESSIP. 
\end{rem}

\begin{lemm} \label{l:socrad} Let $M$ be an object of an abelian category $\mathcal{A}$.
\begin{enumerate}
\item Assume that  $\mathcal{A}$ is AB3* and the socle ${\rm Soc}(M)$ of $M$ is essential in $M$. Then $M$ is (strictly) SSIP-extending if and only if $M$ is (strictly) ESSIP.
\item Assume that $\mathcal{A}$ is AB3 and the radical ${\rm Rad}(M)$ of $M$ is superfluous in $M$. Then $M$ is (strictly) SSSP-lifting if and only if $M$ is (strictly) LSSSP.
\end{enumerate}
\end{lemm}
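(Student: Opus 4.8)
The plan is to prove (1) in detail and to obtain (2) at once from it by the duality principle in abelian categories, since essential and superfluous subobjects, ${\rm Soc}$ and ${\rm Rad}$, the SSIP-extending and SSSP-lifting properties, the ESSIP and LSSSP properties, and the conditions AB3* and AB3 are pairwise dual.

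In (1) the forward implication needs no socle hypothesis: every (strictly) SSIP-extending object is (strictly) ESSIP, as recorded in the remark preceding the lemma. So the content lies in the converse, and the assumption on ${\rm Soc}(M)$ is exactly what drives it. First I would record a \emph{socle criterion for essentiality}: if ${\rm Soc}(M)$ is essential in $M$, then for every subobject $B$ of $M$ the intersection ${\rm Soc}(M)\cap B$ is a semisimple subobject which is essential in $B$ and coincides with ${\rm Soc}(B)$; consequently, for a subobject $X\subseteq B$, $X$ is essential in $B$ if and only if ${\rm Soc}(B)\subseteq X$. The nontrivial direction here uses that a simple subobject meeting $X$ must lie inside $X$, together with the fact that ${\rm Soc}(B)$ is a sum of such simple subobjects; the converse uses that ${\rm Soc}(B)$ is essential in $B$.

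Now assume $M$ is (strictly) ESSIP and let $(X_i)_{i\in I}$ be a family of subobjects with each $X_i$ essential in a direct summand $D_i$ of $M$. The crucial step is to prove that $\bigcap_{i\in I}X_i$ is essential in $\bigcap_{i\in I}D_i$; this is where an infinite family would normally fail, and where the socle hypothesis is indispensable. By the criterion, ${\rm Soc}(D_i)={\rm Soc}(M)\cap D_i\subseteq X_i$ for every $i$. Hence ${\rm Soc}(M)\cap\bigcap_{i\in I}D_i\subseteq {\rm Soc}(M)\cap D_j\subseteq X_j$ for each $j$, so that ${\rm Soc}(M)\cap\bigcap_{i\in I}D_i\subseteq\bigcap_{i\in I}X_i$. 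Since ${\rm Soc}(M)\cap\bigcap_{i\in I}D_i$ is essential in $\bigcap_{i\in I}D_i$, any larger subobject of $\bigcap_{i\in I}D_i$ is essential in it; in particular $\bigcap_{i\in I}X_i$ is essential in $\bigcap_{i\in I}D_i$. Finally, because $M$ is (strictly) ESSIP, $\bigcap_{i\in I}D_i$ is essential in a (fully invariant) direct summand $K$ of $M$, and by transitivity of essential subobjects $\bigcap_{i\in I}X_i$ is essential in $K$. Thus $M$ is (strictly) SSIP-extending, and the qualifiers ``strictly/fully invariant'' are carried along unchanged throughout.

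The only genuine obstacle is the crucial step above: for infinite families the intersection of essential subobjects need not be essential in the intersection of the ambient subobjects. The socle-essential hypothesis removes this obstacle by reducing essentiality of a subobject to the containment of a socle, a condition that is stable under arbitrary intersections; everything else is routine, and part (2) requires no separate argument.
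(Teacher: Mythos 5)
Your proposal is correct and follows essentially the same route as the paper: the converse is reduced to showing that $\bigcap_{i\in I}X_i$ is essential in $\bigcap_{i\in I}D_i$, after which the (strictly) ESSIP hypothesis and transitivity of essentiality finish the argument. The only difference is that the paper delegates that key step to the proof of Nhan's Theorem~3, whereas you supply it explicitly via the socle criterion (essentiality in $B$ being equivalent to containing ${\rm Soc}(B)={\rm Soc}(M)\cap B$), which is precisely the module-theoretic argument being cited.
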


\begin{proof} (1) We have already seen the direct implication. Conversely, suppose that $M$ is (strictly) ESSIP. Let $(M_i)_{i \in I}$ be a family of subobjects of $M$ such that $M_i$ is essential in a direct summand $L_i$ of $M$ for every $i\in I$. As in the proof of \cite[Theorem~3]{Nhan}, it follows that $\bigcap \limits_{i \in I} {M_i}$ is essential in $\bigcap \limits_{i \in I} {L_i}$. By hypothesis, $\bigcap \limits_{i \in I} {L_i}$ is essential in a (fully invariant) direct summand $L$ of $M$. Then  $\bigcap \limits_{i \in I} {M_i}$ is essential in $L$, which shows that $M$ is (strictly) SSIP-extending. 
\end{proof}

\begin{theo}\label{csb-ss2} Let $M$ and $N$ be objects of an abelian category $\mathcal{A}$.
 \begin{enumerate}
 \item Assume that $\mathcal{A}$ is AB3* and any direct summand of $M$ is isomorphic to a subobject of $N$. If $N$ is (strongly) $M$-CS-Baer, then $M$ is (strictly) $ESSIP$.
\item Assume that $\mathcal{A}$ is AB3 and any direct summand of $N$ is isomorphic to a factor object of $M$. If $N$ is dual (strongly) $M$-CS-Baer, then $N$ is (strictly) LSSSP.
 \end{enumerate}
\end{theo}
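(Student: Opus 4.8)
The plan is to prove part (1) and obtain part (2) by the duality principle in abelian categories, since the entire framework is dual-symmetric (AB3* versus AB3, kernels versus images, essential versus superfluous). So I will focus on showing that if $N$ is (strongly) $M$-CS-Baer and every direct summand of $M$ embeds in $N$, then $M$ is (strictly) ESSIP. By Remark 5.3(iii) it would in fact suffice to prove that $M$ is (strictly) SSIP-extending, but the natural target here is ESSIP directly, so I would work with an arbitrary family $(K_j)_{j\in J}$ of direct summands of $M$ and show that $\bigcap_{j\in J} K_j$ is essential in a (fully invariant) direct summand of $M$.

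First I would encode each direct summand as a kernel. For each $j\in J$, write $M=K_j\oplus K_j'$ and let $p_j:M\to K_j'$ be the projection along $K_j$, so that $\operatorname{Ker}(p_j)=K_j$. The hypothesis that every direct summand of $M$ is isomorphic to a subobject of $N$ is exactly what lets me replace the codomain $K_j'$ by $N$: choose a monomorphism $\iota_j:K_j'\to N$ and set $f_j=\iota_j p_j:M\to N$. Since $\iota_j$ is monic, $\operatorname{Ker}(f_j)=\operatorname{Ker}(p_j)=K_j$. Then I would form the family $(f_j)_{j\in J}$ of morphisms $M\to N$ and compute $\bigcap_{j\in J}\operatorname{Ker}(f_j)=\bigcap_{j\in J}K_j$.

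Now I would invoke the hypothesis. Because $N$ is (strongly) $M$-CS-Baer, applied to this very family, the intersection $\bigcap_{j\in J}\operatorname{Ker}(f_j)=\bigcap_{j\in J}K_j$ is essential in a (fully invariant) direct summand of $M$. That is precisely the defining condition for $M$ to be (strictly) ESSIP, so the proof of (1) is complete. The clause ``fully invariant'' is carried through consistently: in the strong case the CS-Baer property furnishes a fully invariant direct summand, which is exactly what strict ESSIP demands, so the strong and non-strong versions are handled in one stroke.

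The only genuine subtlety, and the step I would flag as the main point rather than an obstacle, is the use of the embedding hypothesis ``any direct summand of $M$ is isomorphic to a subobject of $N$.'' Without it one could not guarantee a morphism into $N$ with kernel exactly $K_j$, so this assumption is doing essential work in bridging the intrinsic summand condition on $M$ with the relative CS-Baer condition that quantifies over $\operatorname{Hom}_{\mathcal A}(M,N)$. For part (2), the dual hypothesis ``any direct summand of $N$ is isomorphic to a factor object of $M$'' plays the mirror role, allowing each summand of $N$ to be realized as an image $\operatorname{Im}(g_j)$ for a suitable $g_j:M\to N$; then dual (strong) $M$-CS-Baerness forces $\sum_{j\in J}\operatorname{Im}(g_j)$ to lie above a (fully invariant) direct summand of $N$, which is the defining condition for $N$ to be (strictly) LSSSP.
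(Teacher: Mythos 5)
Your proposal is correct and takes essentially the same approach as the paper: in both cases each direct summand $K_j$ is realized as the kernel of a morphism $M\to N$ by composing the projection onto a complement with a monomorphism into $N$ supplied by the embedding hypothesis, and then the (strongly) $M$-CS-Baer property is applied to that family (the paper merely packages the monomorphisms into a single map $\prod_i (M/M_i)\to N^I$ before projecting back). Part (2) is likewise handled by the mirror argument, exactly as the paper leaves it to duality.
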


\begin{proof} (1) Assume that $N$ is (strongly) $M$-CS-Baer. Let $(M_i)_{i\in I}$ be a family of direct summands of $M$. For every $i\in I$, there is a split short exact sequence 
$$\SelectTips{cm}{} 
\xymatrix{
  0\ar[r] &M_i\ar[r]&M\ar[r]^-{g_i}& M/M_i\ar[r] &0.
}
$$ 
By hypothesis there is a monomorphism $u:\prod\limits_{i\in I}(M/M_i)\to N^I$. For every $j\in I$, consider the morphism $f_j=p_j u\prod\limits_{i\in I}g_i:M\to N$, where $p_j:N^I\to N$ is the canonical projection. Since $N$ is (strongly) $M$-CS-Baer, $\bigcap\limits_{i\in I} M_i=\bigcap\limits_{j\in I} {\rm Ker}(f_j)$ is essential in a (fully invariant) direct summand of $M$. Hence $M$ is (strictly) ESSIP.
\end{proof}

\begin{coll} \label{c:EL} Let $M$ be an object of an abelian category $\mathcal{A}$.
 \begin{enumerate}
 \item Assume that $\mathcal{A}$ is AB3*. Then every (strongly) self-CS-Baer object is (strictly) $ESSIP$.
\item Assume that $\mathcal{A}$ is AB3. Then every dual (strongly) self-CS-Baer object is (strictly) LSSSP.
 \end{enumerate}
\end{coll}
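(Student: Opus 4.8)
The plan is to obtain this corollary directly from Theorem \ref{csb-ss2} by specializing to the case $N=M$. The only thing that needs checking is that the isomorphism hypotheses appearing in the theorem become vacuous when $N=M$, so that no extra assumption on $M$ is required.

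For part (1), I would first observe that saying $M$ is (strongly) self-CS-Baer is, by definition, exactly saying that the object $N=M$ is (strongly) $M$-CS-Baer. I would then invoke Theorem \ref{csb-ss2}(1) with this choice. The hypothesis of that theorem is that every direct summand of $M$ is isomorphic to a subobject of $N$; since $N=M$ and every direct summand $L$ of $M$ is a subobject of $M$ via its inclusion monomorphism, this hypothesis holds trivially. The conclusion of the theorem then yields that $M$ is (strictly) ESSIP, which is what we want. For part (2) I would argue dually: $M$ dual (strongly) self-CS-Baer means $N=M$ is dual (strongly) $M$-CS-Baer, and the required hypothesis of Theorem \ref{csb-ss2}(2) is that every direct summand of $N=M$ be isomorphic to a factor object of $M$. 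This is again automatic, because if $M=L\oplus L'$ then $M/L'\cong L$, so every direct summand is (isomorphic to) a factor object of $M$. Theorem \ref{csb-ss2}(2) then gives that $M$ is (strictly) LSSSP.

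I do not anticipate a genuine obstacle here, since all the substantive work is carried out in Theorem \ref{csb-ss2}; the corollary is a pure specialization. The only point demanding any care is the twofold verification that a direct summand is simultaneously a subobject (for the AB3* statement) and a factor object (for the AB3 statement) of the ambient object, which guarantees that the auxiliary isomorphism conditions of the theorem impose no restriction when $N=M$. Consequently the proof reduces to a single line in each part, simply citing Theorem \ref{csb-ss2} with $N=M$ and noting that its hypotheses are satisfied automatically.

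Explicitly, I would write: \emph{(1) Apply Theorem \ref{csb-ss2}(1) with $N=M$, noting that every direct summand of $M$ is a subobject of $M$. (2) Apply Theorem \ref{csb-ss2}(2) with $N=M$, noting that every direct summand of $M$ is isomorphic to a factor object of $M$.} This keeps the parenthetical ``(strongly)/(strictly)'' and ``dual'' qualifiers flowing through unchanged, exactly as in the theorem, so that all four variants (ordinary and strong, plus their duals) are covered simultaneously.
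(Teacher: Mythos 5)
Your proposal is correct and matches the paper's (implicit) argument: the corollary is indeed just Theorem \ref{csb-ss2} with $N=M$, and your verification that the auxiliary hypotheses (every direct summand of $M$ is a subobject of $M$, resp.\ isomorphic to a factor object of $M$) become automatic is exactly the point that makes the specialization work. The paper offers no separate proof, treating this as immediate, so your one-line argument is precisely what is intended.
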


In general the converse of the above corollary does not hold, as we may see below. 

\begin{ex} \rm Let $S$ be a simple right Ore domain which is not a division ring. Consider the ring $R=\begin{pmatrix} S&K/S \\ 0&S\end{pmatrix}$, where $K$ is the classical right ring of quotients of $S$. Take the right $R$-module $M=\begin{pmatrix} S&K/S \\ 0&0\end{pmatrix}$. By \cite[Example~12]{LR17}, we have ${\rm End}_R(M)\cong S$. Hence ${\rm End}_R(M)$ is a simple domain, and therefore $M$ is indecomposable. Then it is clear that $M$ is strictly ESSIP and strictly LSSSP. 

Now let $t\in S$ be a non-zero non-invertible element ($S$ is not a division ring), and $r=\begin{pmatrix} t&0 \\ 0&0\end{pmatrix}$. Consider the $R$-homomorphism $f_t:M\to M$ defined by $f_t(x)=rx$ for all $x\in M$. It is not hard to see that \[0\neq {\rm Ker}(f_t)=\left\{\begin{pmatrix} 0&k+S \\ 0&0\end{pmatrix}\Big| k\in K \textrm{ and } tk\in S\right\},\] and ${\rm Ker}(f_t)$ is not essential in $M$. Remembering that $M$ is indecomposable, $M$ is not (strongly) self-CS-Rickart, and thus not (strongly) self-CS-Baer. On the other hand, we have \[{\rm Im}(f_t)=\left\{\begin{pmatrix} ts&tk+S \\ 0&0\end{pmatrix}\Big| (s,k)\in S\times K\right\}\] by \cite[Example~2.9]{TTH}. Here ${\rm Im}(f_t)$ is not superfluous in $M$, because ${\rm Im}(f_t)\nsubseteq {\rm Rad}(M)$. Note that ${\rm Rad}(M)=J(R)$ (the Jacobson radical of $R$) and $J(S)=0$, because $S$ is a simple ring. Hence $M$ is not dual (strongly) self-CS-Rickart, and thus not dual (strongly) self-CS-Baer. 
\end{ex}

Nevertheless, we have the following property.

\begin{lemm} \label{l:AB} Let $M$ and $N$ be objects of an abelian category $\A$.  
\begin{enumerate}
\item If $\mathcal{A}$ is AB3* and $M\oplus N$ is (strictly) ESSIP, then $N$ is (strongly) $M$-CS-Baer.
\item If $\mathcal{A}$ is AB3 and $M\oplus N$ is (strictly) LSSSP, then $N$ is dual (strongly) $M$-CS-Baer.
\end{enumerate} 
\end{lemm}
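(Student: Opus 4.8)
The plan is to realise the intersection $\bigcap_{i\in I}{\rm Ker}(f_i)$ as the image in $M$ of an intersection of direct summands of $M\oplus N$, and then feed this into the ESSIP hypothesis. Concretely, given a family $(f_i)_{i\in I}$ of morphisms $f_i\colon M\to N$, each morphism $\langle 1_M,f_i\rangle\colon M\to M\oplus N$ is a section (the first projection is a retraction), so its image $G_i$, the graph of $f_i$, is a direct summand of $M\oplus N$, and in fact $M\oplus N=G_i\oplus(0\oplus N)$. Adjoining the direct summand $M\oplus 0$ to the family $(G_i)_{i\in I}$, a routine computation gives $(M\oplus 0)\cap\bigcap_{i\in I}G_i=\big(\bigcap_{i\in I}{\rm Ker}(f_i)\big)\oplus 0$. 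Since $M\oplus N$ is (strictly) ESSIP, this intersection is essential in a (fully invariant) direct summand $L$ of $M\oplus N$.

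The heart of the matter is the descent from $L$ to a direct summand of $M$, and this is the step I expect to be the main obstacle. Write $D=\bigcap_{i\in I}{\rm Ker}(f_i)$ and let $p\colon M\oplus N\to M$ be the projection with kernel $0\oplus N$. Since $D\oplus 0$ is essential in $L$ while $(D\oplus 0)\cap(0\oplus N)=0$, I first get $L\cap(0\oplus N)=0$; hence $p$ restricts to a monomorphism on $L$, so $D$ is essential in $D':=p(L)$, and moreover $E:=L\cap(M\oplus 0)$ is essential in $L$. What remains, and what is genuinely delicate, is to promote $D'$ to a \emph{direct summand} of $M$: essentiality does not transport along projections the way honest summand decompositions do, and the sum of the two summands $L$ and $0\oplus N$ need not itself be a summand, so one cannot simply apply modularity to $L\oplus(0\oplus N)$. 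My proposed route is to study the compression $\theta=\epsilon e\,|_{M\oplus 0}$, where $e$ is the idempotent with image $L$ and complement $C$, and $\epsilon$ the idempotent with image $M\oplus 0$: here $L\cap(0\oplus N)=0$ yields ${\rm Ker}(\theta)=(M\oplus 0)\cap C$, while $E$ essential in $L$ shows $\theta$ fixes $E$, and from this data one extracts a summand of $M\oplus 0$ in which $D\oplus 0$ is essential. A cleaner alternative, which I would prefer, is to reduce to the single-morphism (relative CS-Rickart) situation via the splitting $M\oplus N=G_i\oplus(0\oplus N)$ and to invoke the relative CS-Rickart analogue proved in \cite{CR1,CR3}, whose arguments already contain precisely this descent.

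For the strict/strong statement one must in addition check that the summand of $M$ produced above is fully invariant whenever $L$ is fully invariant in $M\oplus N$; I would track this through the construction, using that $M\oplus 0$ is a fully invariant summand and the characterisations of weak duo objects and fully invariant summands from \cite{CO1} (compare Proposition~\ref{st0}). Finally, part~(2) I would not prove directly: it is the formal dual of part~(1), obtained by applying part~(1) in the opposite category $\mathcal{A}^{\mathrm{op}}$ (which is AB3* when $\mathcal{A}$ is AB3), under which ESSIP, essential monomorphisms and direct summands dualise to LSSSP, superfluous epimorphisms and direct summands, and ``(strongly) $M$-CS-Baer'' dualises to ``dual (strongly) $M$-CS-Baer''.
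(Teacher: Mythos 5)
Your overall strategy is exactly the paper's: form the sections $\left[\begin{smallmatrix}1\\0\end{smallmatrix}\right]$ and $g_i=\left[\begin{smallmatrix}1\\f_i\end{smallmatrix}\right]:M\to M\oplus N$, identify $\bigcap_{i\in I}{\rm Ker}(f_i)$ with $(M\oplus 0)\cap\bigl(\bigcap_{i\in I}{\rm Im}(g_i)\bigr)$, and feed this family of direct summands into the (strict) ESSIP hypothesis. (The paper verifies the intersection identity by exhibiting the relevant squares as pullbacks/pushouts and invoking \cite[Lemma~2.7]{CKO21}, rather than by the set-theoretic computation you call routine, but in an abelian category those amount to the same thing.) Part (2) by passing to $\mathcal{A}^{\mathrm{op}}$ is also how the paper operates throughout.

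The problem is the step you yourself flag: descending from ``$D=\bigcap_i{\rm Ker}(f_i)$ is essential in a (fully invariant) direct summand $L$ of $M\oplus N$'' to ``$D$ is essential in a (fully invariant) direct summand of $M$.'' Your preliminary observations are correct ($L\cap(0\oplus N)=0$, the projection is monic on $L$, $D$ is essential in $p(L)$, ${\rm Ker}(\theta)=(M\oplus 0)\cap C$, $\theta$ fixes $L\cap(M\oplus 0)$), but they stop exactly where the content begins: you never produce an idempotent of $M$ whose image contains $D$ essentially, and essentiality of $p(L)$ in a summand of $M$ is precisely what must be proved, not a consequence of $\theta$ being monic on an essential subobject. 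Note that the usual modular-law trick (apply the hypothesis to $X\oplus(\text{complement})$ and then cut back down) is not available here, because $D\oplus(0\oplus N)$ is not an intersection of direct summands of $M\oplus N$, so ESSIP says nothing about it. Your fallback of reducing to the single-morphism CS-Rickart statement of \cite{CR1,CR3} does not repair this either: an infinite family of kernels collapses to a single kernel only after replacing $N$ by $N^I$, and the hypothesis you hold is ESSIP for $M\oplus N$, not for $M\oplus N^I$. So as written the argument is incomplete at its only genuinely delicate point; what is needed is a separate lemma to the effect that a subobject of the summand $M$ of $M\oplus N$ which is essential in a (fully invariant) direct summand of $M\oplus N$ is already essential in a (fully invariant) direct summand of $M$. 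For what it is worth, the paper's own proof dispatches this descent in a single unexplained clause (``and thus it is essential in a (fully invariant) direct summand of $M$''), so you have correctly located the one step that carries all the weight; but identifying the obstacle is not the same as overcoming it, and your proposal does not overcome it.
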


\begin{proof} (1) Let $(f_i)_{i\in I}$ be a family of morphisms $f_i:M\to N$ in $\A$. 
The morphisms $\left[\begin{smallmatrix} 1 \\ 0 \end{smallmatrix}\right]:M\to M\oplus N$ 
and $g_i=\left[\begin{smallmatrix} 1 \\ f_i \end{smallmatrix}\right]:M\to M\oplus N$ are sections for every $i\in I$, having obvious left inverses. For each $i\in I$ denote by $j_i:{\rm Im}(g_i)\to M\oplus N$ the inclusion morphism, and construct the left commutative diagram
$$\SelectTips{cm}{}
\xymatrix{
& 0 \ar[d] & 0 \ar[d] & & & \\
0 \ar[r] & {\rm Ker}(f_i) \ar[r] \ar[d] & M \ar[r] \ar[d]^{\left[\begin{smallmatrix} 1 \\ 
0 \end{smallmatrix}\right]} &C \ar@{=}[d] \ar[r] & 0 \\
0 \ar[r] & {\rm Im}(g_i) \ar[r]_{j_i} \ar[d] & 
M\oplus N \ar[r] \ar[d] & C \ar[r] & 0 \\
&{\rm Im}(f_i) \ar@{=}[r] \ar[d] & {\rm Im}(f_i) \ar[d] & & \\ 
& 0 & 0 & & &
}
\hspace{1cm}
\xymatrix{
& \\ 
\bigcap_{i\in I} {\rm Ker}(f_i) \ar[r] \ar[d] & M \ar[d] \\ 
\bigcap_{i\in I} {\rm Im}(g_i) \ar[r] & M\oplus N
}
$$
whose left upper square is both a pullback and a pushout (e.g., from \cite[Proposition~2.12]{Buhler} for the exact structure consisting of all short exact sequences). Then the induced right commutative diagram is a pullback and a pushout by \cite[Lemma~2.7]{CKO21}, where all morphisms are monomorphisms. Since $M\oplus N$ is (strictly) ESSIP, 
$\bigcap_{i\in I} {\rm Ker}(f_i)=M\cap \left(\bigcap_{i\in I} {\rm Im}(g_i)\right)$ is essential in a (fully invariant) direct summand of $M\oplus N$, and thus it is essential in a (fully invariant) direct summand of $M$. Hence $N$ is (strongly) $M$-CS-Baer.
\end{proof}

\begin{coll} Let $M$ be an object of an abelian category $\A$.  
\begin{enumerate}
\item If $\mathcal{A}$ is AB3* and $M\oplus M$ is (strictly) ESSIP, then $M$ is (strongly) self-CS-Baer.
\item If $\mathcal{A}$ is AB3 and $M\oplus M$ is (strictly) LSSSP, then $M$ is dual (strongly) self-CS-Baer.
\end{enumerate} 
\end{coll}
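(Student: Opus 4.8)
The plan is to deduce this corollary directly from Lemma~\ref{l:AB} by specialising $N$ to $M$, exactly as the preceding corollaries in the excerpt are obtained from their more general theorems. For part (1), I would start from the hypothesis that $M\oplus M$ is (strictly) ESSIP. Since Lemma~\ref{l:AB}(1) asserts that whenever $M'\oplus N'$ is (strictly) ESSIP the object $N'$ is (strongly) $M'$-CS-Baer, I simply take $M'=N'=M$. Then $M\oplus M$ being (strictly) ESSIP yields, by that lemma, that $M$ is (strongly) $M$-CS-Baer, which is precisely the definition of $M$ being (strongly) self-CS-Baer (see Definition, item (1)(ii)). This is essentially a one-line invocation.

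For part (2), I would argue dually and in complete analogy: assuming $\mathcal{A}$ is AB3 and $M\oplus M$ is (strictly) LSSSP, I apply Lemma~\ref{l:AB}(2) with $M'=N'=M$ to conclude that $M$ is dual (strongly) $M$-CS-Baer, i.e., $M$ is dual (strongly) self-CS-Baer by Definition, item (2)(ii). No separate dual argument needs to be spelled out, since the duality principle in abelian categories already underlies the formulation of Lemma~\ref{l:AB}, and the corollary's two parts correspond exactly to the two parts of that lemma.

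I do not anticipate any genuine obstacle here, since both parts follow immediately by taking $N=M$ in the respective parts of Lemma~\ref{l:AB}; the only thing to verify is that the technical requirement in Lemma~\ref{l:AB}(1)---which in the more general Theorem~\ref{csb-ss2} demanded that every direct summand of $M$ be isomorphic to a subobject of $N$---is not actually needed in Lemma~\ref{l:AB} itself. Inspecting the statement of Lemma~\ref{l:AB}, this hypothesis is indeed absent (it was only imposed in Theorem~\ref{csb-ss2} for the reverse direction), so the specialisation $N=M$ is unconditional. Hence the proof reduces to the single sentence: apply Lemma~\ref{l:AB} with $N=M$.

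\begin{proof} (1) Apply Lemma~\ref{l:AB}(1) with $N=M$: since $M\oplus M$ is (strictly) ESSIP, $M$ is (strongly) $M$-CS-Baer, that is, (strongly) self-CS-Baer.

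(2) Apply Lemma~\ref{l:AB}(2) with $N=M$: since $M\oplus M$ is (strictly) LSSSP, $M$ is dual (strongly) $M$-CS-Baer, that is, dual (strongly) self-CS-Baer.
\end{proof}
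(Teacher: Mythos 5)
Your proposal is correct and coincides with the paper's intended argument: the corollary is stated immediately after Lemma~6.8 precisely as the specialisation $N=M$, and no additional hypothesis is needed since Lemma~6.8 (unlike Theorem~6.5) carries no embedding condition. Nothing further is required.
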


\section{(Dual) relative CS-Baer versus (dual) relative CS-Rickart}

Every (dual) relative CS-Baer object is clearly (dual) relative CS-Rickart. The converse does not hold in general, as we may see in the following example.

\begin{ex} \rm (i) Consider the $\mathbb{Z}$-module $M=\mathbb{Z}^{(\mathbb{R})}$. By \cite[Remark~2.28]{LRR10}, $M$ is not self-Baer, but it is self-Rickart, and hence $M$ is self-CS-Rickart. Since $\mathbb{Z}$ is non-singular, then so is $M$ \cite[Proposition~1.22]{Goodearl}. Now Corollary \ref{c:Z} allows us to deduce that $M$ is not self-CS-Baer. 

(ii) Let $F$ be a field and let $A=F^{\mathbb{N}}$. Let $R$ be the subring of $A$ consisting of sequences $(a_1, a_2, \ldots) \in A$ that are eventually constant. Clearly, $R$ is a commutative ring. Moreover, it is well known that $R$ is a von Neumann regular ring. Hence $R$ is a self-Rickart and dual self-Rickart $R$-module, and consequently, a self-CS-Rickart and dual self-CS-Rickart $R$-module. Now let us prove that $R$ is not a self-CS-Baer $R$-module. For any integer $i \geq 1$, let $e_i \in R$ denote the $i^{\rm th}$ unit vector $(0, \ldots, 1, 0, \ldots)$. Set $S=\{e_1, e_3, e_5, \ldots\}$. Then ${\rm ann}_R(S)$ consists of sequences $a=(a_1, a_2, \ldots)$ which are eventually zero, and such that $a_n=0$ for $n$ odd (see \cite[Example 7.54]{Lam}). Suppose that ${\rm ann}_R(S)$ is essential in $eR$ for some idempotent $e \in R$. Then $e=(b_1, b_2, \ldots)$ such that $b_i = 0$ or $b_i=1$ for all $i \geq 1$. Moreover, $b_i=1$ for $i$ even and there exists $n \geq 1$ such that $b_i=1$ for all $i \geq n$. Consider the $R$-submodule $X$ of $eR$ consisting of sequences $x=(x_1, x_2, \ldots)$ which are eventually zero, and such that $x_n=0$ for $n$ even. It is easily seen that $X \cap {\rm ann}_R(S)=0$, a contradiction. It follows that the $R$-module $R$ is not self-CS-Baer. In addition, note that the $R$-module $R$ is not dual self-CS-Baer, since $R$ is not a semiperfect ring (see Proposition \ref{p:semiperfect}).
\end{ex}

The following proposition shows that one may use the theory of relative CS-Rickart objects in order to develop the theory of relative CS-Baer objects in abelian categories. We illustrate its applications several times throughout the paper. 

\begin{prop}\label{csr-csb}
 \label{CSBR} Let $M$ and $N$ be objects of an abelian category $\mathcal{A}$.
          \begin{enumerate}
                    \item Assume that $\mathcal{A}$ is AB3*. Then $N$ is (strongly) $M$-CS-Baer if and only if $N^I$ is (strongly) $M$-CS-Rickart for any set $I$.
                   \item Assume that $\mathcal{A}$ is AB3. Then $N$ is dual (strongly) $M$-CS-Baer if and only if $N$ is dual (strongly) $M^{(I)}$-CS-Rickart for any set $I$. 
         \end{enumerate}
\end{prop}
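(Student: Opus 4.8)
The plan is to prove the two statements separately, noting that each direction amounts to reducing a Baer-type condition (involving an arbitrary family of morphisms) to a Rickart-type condition (involving a single morphism) by packaging the family into a single morphism to a product, respectively from a coproduct. I will treat part (1); part (2) then follows by the duality principle in abelian categories, since replacing $\A$ by its opposite $\A^{\mathrm{op}}$ interchanges AB3* with AB3, kernels with images, essential monomorphisms with superfluous epimorphisms, and $N^I$ with $N^{(I)}$; thus ``$N$ dual $M$-CS-Baer'' in $\A$ is precisely ``$N$ $M$-CS-Baer'' in $\A^{\mathrm{op}}$, and the claimed equivalence in (2) is the dual of (1).

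For part (1), first suppose $N$ is (strongly) $M$-CS-Baer, and fix a set $I$; I want $N^I$ to be (strongly) $M$-CS-Rickart. A single morphism $g\colon M\to N^I$ is the same data as a family $(g_i)_{i\in I}$ with $g_i=p_i g$, where $p_i\colon N^I\to N$ are the canonical projections, and one has $\Ker(g)=\bigcap_{i\in I}\Ker(g_i)$. By the $M$-CS-Baer hypothesis applied to the family $(g_i)_{i\in I}$, this intersection is essential in a (fully invariant) direct summand of $M$, which says exactly that $\Ker(g)$ has the required property; hence $N^I$ is (strongly) $M$-CS-Rickart. Conversely, suppose $N^I$ is (strongly) $M$-CS-Rickart for every set $I$, and let $(f_i)_{i\in I}$ be an arbitrary family of morphisms $f_i\colon M\to N$. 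Using AB3* I assemble these into the single morphism $f=(f_i)_{i\in I}\colon M\to N^I$ determined by $p_i f=f_i$. Again $\Ker(f)=\bigcap_{i\in I}\Ker(f_i)$, and applying the (strongly) $M$-CS-Rickart hypothesis to $f$ shows this kernel is essential in a (fully invariant) direct summand of $M$. Hence $N$ is (strongly) $M$-CS-Baer.

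The one point requiring care, rather than a genuine obstacle, is the bookkeeping for the \emph{strong} variants: one must check that the direct summand witnessing the CS-Rickart condition for the single morphism $f$ is fully invariant in $M$ precisely when the summand for the family is, but since the two conditions are about the \emph{same} subobject $\bigcap_{i\in I}\Ker(f_i)$ of $M$, the notions of ``essential in a direct summand'' and ``essential in a fully invariant direct summand'' transfer verbatim, with no extra argument. The only structural input is that $\Ker$ of a morphism into a product is the intersection of the kernels of its components, which holds in any AB3* abelian category, and that AB3* is exactly what is needed to form $N^I$ and the universal morphism $f$. I therefore expect the proof to be short, with the main content being the clean identification of families of morphisms with single morphisms to (resp. from) the product (resp. coproduct), and the invocation of the duality principle to obtain (2) from (1).
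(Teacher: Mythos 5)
Your proof is correct and follows essentially the same route as the paper: both directions identify a family $(f_i)_{i\in I}$ of morphisms $M\to N$ with the single morphism $M\to N^I$ via the universal property of the product, use that the kernel of that morphism is $\bigcap_{i\in I}\mathrm{Ker}(f_i)$, and note that the strong variant transfers verbatim since the relevant subobject of $M$ is the same; part (2) is obtained by duality, just as the paper leaves it implicit.
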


\begin{proof} (1) Suppose that $N$ is (strongly) $M$-CS-Baer. Let $I$ be a set and let $f:M\to N^I$ be a morphism in $\mathcal{A}$. Denote by $p_i:N^I\to N$ the canonical projection and $f_i=p_if:M\to N$ for any $i\in I$. Since $N$ is $M$-CS-Baer, ${\rm Ker}(f)=\bigcap_{i\in I} {\rm Ker}(f_i)$ is essential in a (fully invariant) direct summand of $M$, and thus $N^I$ is (strongly) $M$-CS-Rickart.

Conversely, suppose that $N^I$ is (strongly) $M$-CS-Rickart for any set $I$. Let $(f_i)_{i\in I}$ be a family of morphisms $f_i:M\to N$ in $\mathcal{A}$. Then from the universal property of the product, there is a unique morphism $f=\prod\limits_{i\in I}f_i:M\to N^I$ such that for every $i\in I$, we have $p_if=f_i$, where $p_i:N^I\to N$ is the canonical projection. Since $N^I$ is (strongly) $M$-CS-Rickart, $\bigcap_{i\in I} {\rm Ker}(f_i)={\rm Ker}(f)$ is essential in a (fully invariant) direct summand of $M$. Thus $N$ is (strongly) $M$-CS-Baer.
\end{proof}

Next we show how the notions of (strictly) SSIP-extending and (strictly) ESSIP objects are related to that of (strongly) CS-Baer object.

\begin{theo}\label{csb-ss} Let $M$ and $N$ be objects of an abelian category $\mathcal{A}$.
\begin{enumerate}
\item Assume that $\mathcal{A}$ is AB3*. If $N$ is (strongly) $M$-CS-Rickart and $M$ is (strictly) SSIP-extending, then $N$ is (strongly) $M$-CS-Baer. The converse holds if any direct summand of $M$ is isomorphic to a subobject of $N$, and ${\rm Soc}(M)$ is an essential subobject of $M$.
\item Assume that $\mathcal{A}$ is AB3. If $N$ is dual (strongly) $M$-CS-Rickart and $N$ is (strictly) SSSP-lifting, then  $N$ is dual (strongly) $M$-CS-Baer. The converse holds if any direct summand of $N$ is isomorphic to a factor object of $M$, and ${\rm Rad}(M)$ is a superfluous subobject of $M$.
\end{enumerate}
\end{theo}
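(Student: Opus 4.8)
The plan is to establish part~(1) in full and to read off part~(2) by the duality principle in abelian categories, since dual $M$-CS-Rickart, dual $M$-CS-Baer, (strictly) SSSP-lifting, and ``${\rm Rad}(M)$ superfluous'' are precisely the categorical duals of $M$-CS-Rickart, $M$-CS-Baer, (strictly) SSIP-extending, and ``${\rm Soc}(M)$ essential'' in an AB3* category.

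For the forward implication of part~(1) I would argue directly from the definitions, with no computation needed. Let $(f_i)_{i\in I}$ be a family of morphisms $f_i:M\to N$. Since $N$ is (strongly) $M$-CS-Rickart, each single kernel ${\rm Ker}(f_i)$ is essential in a (fully invariant) direct summand of $M$, hence in particular essential in a direct summand of $M$. The definition of (strictly) SSIP-extending is tailored to exactly this situation: it guarantees that the intersection of any family of subobjects each essential in a direct summand is itself essential in a (fully invariant) direct summand of $M$. Applying this to the family $({\rm Ker}(f_i))_{i\in I}$ shows that $\bigcap_{i\in I}{\rm Ker}(f_i)$ is essential in a (fully invariant) direct summand of $M$, which is the assertion that $N$ is (strongly) $M$-CS-Baer. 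In the strong case one does not even need the summands containing the individual kernels to be fully invariant; strict SSIP-extending already upgrades the conclusion to a fully invariant summand.

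For the converse I assume $N$ is (strongly) $M$-CS-Baer, that every direct summand of $M$ is isomorphic to a subobject of $N$, and that ${\rm Soc}(M)$ is essential in $M$. That $N$ is (strongly) $M$-CS-Rickart is immediate on taking a single-element index set. To recover the (strictly) SSIP-extending property of $M$, I would chain two earlier results: Theorem~\ref{csb-ss2}(1) yields that $M$ is (strictly) ESSIP, using precisely the hypothesis that every direct summand of $M$ embeds in $N$; and then Lemma~\ref{l:socrad}(1), now applicable because ${\rm Soc}(M)$ is essential, upgrades (strictly) ESSIP to (strictly) SSIP-extending.

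The forward direction presents no real obstacle, being a direct unwinding of the definitions. The only subtle point, and the reason the two extra hypotheses enter the converse, sits inside Lemma~\ref{l:socrad}: ESSIP is in general strictly weaker than SSIP-extending, and the two coincide only when the socle is essential. Thus the essential-socle assumption is exactly what permits the return from the summand-intersection property to the extending-type property, while the embedding hypothesis is what lets Theorem~\ref{csb-ss2}(1) apply; part~(2) mirrors this with ${\rm Rad}(M)$ superfluous and factor objects in place of subobjects.
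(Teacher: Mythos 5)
Your proposal is correct and follows essentially the same route as the paper: the forward direction is the same direct unwinding of the definitions applied to the family of kernels, and the converse is obtained, exactly as in the paper, by combining Theorem~\ref{csb-ss2} (to get (strictly) ESSIP from the embedding hypothesis) with Lemma~\ref{l:socrad} (to upgrade to (strictly) SSIP-extending under the essential-socle assumption), with part~(2) handled by duality.
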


\begin{proof} (1) Assume that $N$ is (strongly) $M$-CS-Rickart and $M$ is (strictly) SSIP-extending. Let $(f_i)_{i\in I}$ be a family of morphisms $f_i:M\to N$. Since $N$ is (strongly) $M$-CS-Rickart, ${\rm Ker}(f_i)$ is essential in a (fully invariant) direct summand of $M$ for every $i\in I$, and because $M$ is (strictly) SSIP-extending $\bigcap\limits_{i\in I}{\rm Ker}(f_i)={\rm Ker}\left(\prod\limits_{i\in I}f_i\right)$ is essential in a (fully invariant) direct summand of $M$. Hence $N$ is (strongly) $M$-CS-Baer.

The converse follows by Lemma \ref{l:socrad} and Theorem \ref{csb-ss2}.
\end{proof}

The following corollary generalizes the module-theoretic result \cite[Theorem~3]{Nhan}.

\begin{coll} \label{c:socrad} Let $M$ and $N$ be objects of an abelian category $\mathcal{A}$.
 \begin{enumerate}
 \item Assume that $\mathcal{A}$ is AB3*. If $M$ is (strongly) self-CS-Rickart and (strictly) SSIP-extending, then $M$ is (strongly) self-CS-Baer. The converse holds if ${\rm Soc}(M)$ is an essential subobject of $M$. 
\item Assume that $\mathcal{A}$ is AB3. If $M$ is dual (strongly) self-CS-Rickart and (strictly) SSSP-lifting, then $M$ is dual (strongly) self-CS-Baer. The converse holds if ${\rm Rad}(M)$ is a superfluous subobject of $M$.
 \end{enumerate}
\end{coll}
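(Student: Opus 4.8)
The plan is to prove the corollary as a direct specialization of Theorem~\ref{csb-ss} by taking $N=M$. The corollary concerns the self-case, so first I would set $N=M$ throughout, so that $U=\End_{\mathcal{A}}(M)$ and the families of morphisms become families of endomorphisms of $M$. With this substitution the hypotheses of Theorem~\ref{csb-ss} simplify considerably, and the two halves of the corollary become the AB3* and AB3 instances respectively.

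For the forward implication in part (1), I would observe that if $M$ is (strongly) self-CS-Rickart and (strictly) SSIP-extending, then Theorem~\ref{csb-ss}(1) applied with $N=M$ immediately yields that $M$ is (strongly) self-CS-Baer; no extra hypothesis is needed for this direction. For the converse, the point is that the two technical conditions in Theorem~\ref{csb-ss}(1)---namely that every direct summand of $M$ is isomorphic to a subobject of $N$, and that $\mathrm{Soc}(M)$ is essential in $M$---collapse nicely in the self-case: when $N=M$, every direct summand of $M$ is trivially (isomorphic to) a subobject of $M=N$, so that condition is automatic. Hence the only surviving hypothesis for the converse is that $\mathrm{Soc}(M)$ be essential in $M$, which is exactly what the corollary assumes. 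Part (2) is handled dually: taking $N=M$ in Theorem~\ref{csb-ss}(2), the condition that every direct summand of $N$ be isomorphic to a factor object of $M$ becomes automatic, leaving only the requirement that $\mathrm{Rad}(M)$ be superfluous in $M$.

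Concretely, for the converse direction I would also note that a (strongly) self-CS-Baer object is trivially (strongly) self-CS-Rickart (by Remark~2.3(ii), taking a single-element index set), so that half of the conclusion is free. The essential content of the converse is therefore that $M$ being (strongly) self-CS-Baer forces $M$ to be (strictly) SSIP-extending; this follows by combining Corollary~\ref{c:EL}(1), which gives that $M$ is (strictly) ESSIP, with Lemma~\ref{l:socrad}(1), which upgrades (strictly) ESSIP to (strictly) SSIP-extending precisely under the hypothesis that $\mathrm{Soc}(M)$ is essential in $M$. This is in fact the route already taken in the proof of Theorem~\ref{csb-ss}.

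I expect no genuine obstacle here, since everything reduces to specialization; the only thing requiring care is the bookkeeping of the parenthetical (strongly)/(strictly) qualifiers, making sure the fully invariant versions track correctly through each cited result. The proof can therefore be stated in one or two sentences per part, simply invoking Theorem~\ref{csb-ss} with $N=M$ and noting that the isomorphism hypotheses are vacuous in the self-case.

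\begin{proof} Both parts follow immediately from Theorem \ref{csb-ss} by taking $N=M$, upon noting that in this case every direct summand of $M$ is trivially isomorphic to a subobject (respectively, factor object) of $N=M$, so that the corresponding hypotheses in Theorem \ref{csb-ss} are automatically satisfied.
\end{proof}
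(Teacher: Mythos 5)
Your proof is correct and matches the paper's (implicit) argument exactly: the paper states this corollary without proof as the immediate specialization $N=M$ of Theorem~\ref{csb-ss}, and your observation that the hypothesis ``every direct summand of $M$ is isomorphic to a subobject (factor object) of $N$'' becomes vacuous when $N=M$ is precisely the point. Your supplementary remarks on the converse (via Corollary~\ref{c:EL} and Lemma~\ref{l:socrad}) also agree with how the paper proves the converse direction of Theorem~\ref{csb-ss} itself.
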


We present some illustrations of the above corollary in module and comodule categories. Recall that a unitary ring $R$ is called \emph{right semiartinian} if every non-zero right $R$-module has non-zero socle, and \emph{right max} if every non-zero right $R$-module has a maximal submodule.

\begin{coll} Let $R$ be a unitary ring, and let $M$ be a right $R$-module. Then:
\begin{enumerate} 
\item Assume that $M$ is finitely cogenerated or $R$ is a right semiartinian ring. Then $M$ is (strongly) self-CS-Baer if and only if $M$ is (strongly) self-CS-Rickart and (strictly) SSIP-extending.
\item Assume that $M$ is finitely generated or $R$ is a right max ring. Then $M$ is dual (strongly) self-CS-Baer if and only if $M$ is dual (strongly) self-CS-Rickart and (strictly) SSSP-lifting. 
\end{enumerate}
\end{coll}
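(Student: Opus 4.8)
The plan is to reduce the whole statement to the already-established Corollary \ref{c:socrad}, since both items here assert exactly the two-way implication proved there, only now under concrete ring- or module-theoretic hypotheses that are required solely for the converse directions. The forward implications---namely that (strongly) self-CS-Rickart together with (strictly) SSIP-extending yields (strongly) self-CS-Baer, and its dual---hold verbatim by Corollary \ref{c:socrad} with no extra assumptions, so I would dispose of these at once.

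The real content is therefore to check that each listed hypothesis guarantees the essentiality (respectively superfluity) condition demanded by the converse halves of Corollary \ref{c:socrad}. For part (1) I would show that both ``$M$ finitely cogenerated'' and ``$R$ right semiartinian'' force ${\rm Soc}(M)$ to be an essential subobject of $M$. The first is the standard characterization of finitely cogenerated modules, which have essential (and finitely generated) socle. For the second I would argue directly: given a non-zero submodule $N$ of $M$, right semiartinianity yields ${\rm Soc}(N)\neq 0$; since ${\rm Soc}(N)=N\cap {\rm Soc}(M)$, this gives $N\cap {\rm Soc}(M)\neq 0$, so ${\rm Soc}(M)$ is essential in $M$. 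Corollary \ref{c:socrad}(1) then closes the argument.

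For part (2) the dual task is to show that ``$M$ finitely generated'' and ``$R$ right max'' each make ${\rm Rad}(M)$ a superfluous subobject of $M$. The finitely generated case is the classical fact that the radical of a finitely generated module is small. For the right max case I would run a short quotient argument: if $N+{\rm Rad}(M)=M$ with $N\neq M$, then $M/N$ is non-zero and hence has a maximal submodule $K/N$; but then ${\rm Rad}(M)\subseteq K\subsetneq M$ forces $N+{\rm Rad}(M)\subseteq K\neq M$, a contradiction, so ${\rm Rad}(M)$ is superfluous in $M$. Corollary \ref{c:socrad}(2) then finishes part (2).

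I do not expect a genuine obstacle; the sole point requiring care is the bookkeeping of the ``(strongly)'' and ``(strictly)'' qualifiers, which must be matched consistently on both sides of each equivalence, precisely as they are threaded through Corollary \ref{c:socrad}. Beyond that, every step is an invocation of standard facts about socles and radicals over the respective rings.
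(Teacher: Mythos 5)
Your proposal is correct and follows essentially the same route as the paper: both reduce the statement to Corollary \ref{c:socrad} after verifying that each hypothesis forces ${\rm Soc}(M)$ to be essential (respectively ${\rm Rad}(M)$ to be superfluous) in $M$. The only cosmetic difference is that you prove these standard facts directly, whereas the paper cites \cite[21.3, 21.6]{Wis}.
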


\begin{proof} If $M$ is finitely cogenerated or $R$ is a right semiartinian ring, then ${\rm Soc}(M)$ is an essential submodule of $M$ \cite[21.3]{Wis}. If $M$ is finitely generated or $R$ is a right max ring, then ${\rm Rad}(M)$ is a superfluous submodule of $M$ \cite[21.6]{Wis}. Finally, use Corollary \ref{c:socrad}.
\end{proof}

\begin{coll} Let $C$ be a coalgebra over a field, and let $M$ be a left $C$-comodule. Then: 
\begin{enumerate} 
\item $M$ is (strongly) self-CS-Baer if and only if $M$ is (strongly) self-CS-Rickart and (strictly) SSIP-extending.
\item If $C$ is right semiperfect, then $M$ is dual (strongly) self-CS-Baer if and only if $M$ is dual (strongly) self-CS-Rickart and (strictly) SSSP-lifting.
\end{enumerate}
\end{coll}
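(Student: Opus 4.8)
The plan is to obtain both equivalences as direct instances of Corollary \ref{c:socrad}, once the relevant finiteness hypotheses have been verified inside the category of left $C$-comodules. First I would record the standing facts about this category $\mathcal{A}$: over a coalgebra $C$ over a field, the category of left $C$-comodules is a Grothendieck category, hence it is both cocomplete and complete, so it is AB3 and AB3*. Consequently part (1) of Corollary \ref{c:socrad} (which requires AB3*) and part (2) (which requires AB3) are both applicable, and in each case the task reduces to checking the single hypothesis attached to the converse direction, namely that ${\rm Soc}(M)$ is essential (for part (1)) or that ${\rm Rad}(M)$ is superfluous (for part (2)).

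For part (1), the decisive input is that ${\rm Soc}(M)$ is essential in $M$ for \emph{every} left $C$-comodule $M$. I would derive this from the fundamental local finiteness of comodule categories: every comodule is the directed union of its finite-dimensional subcomodules, whence every non-zero subcomodule contains a simple subcomodule; a routine argument then shows that ${\rm Soc}(M)$ meets every non-zero subobject of $M$, i.e. it is essential. With this in place, Corollary \ref{c:socrad}(1) yields the stated equivalence without any further hypothesis on $C$.

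For part (2), the corresponding requirement is that ${\rm Rad}(M)$ be superfluous in $M$ for every left $C$-comodule $M$, and this is precisely where the assumption that $C$ is right semiperfect is used. I would invoke the characterization of right semiperfect coalgebras (via the finite-dimensionality of the injective envelopes of the simple comodules, equivalently the existence of projective covers in the appropriate comodule category) to conclude that ${\rm Rad}(M)$ is superfluous in every $M\in\mathcal{A}$. Granting this, Corollary \ref{c:socrad}(2) completes the proof.

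The main obstacle is the second input, for part (2): unlike the essentiality of the socle, which is automatic for comodules, the superfluity of the radical genuinely fails for arbitrary coalgebras, so the argument must extract it precisely from right semiperfectness. The delicate point is the left–right bookkeeping, since $M$ is a \emph{left} comodule while the hypothesis is stated on the \emph{right}; semiperfect coalgebras exhibit a well-known left–right crossover in the projective-cover characterization, and I would verify carefully against the standard references on semiperfect coalgebras that right semiperfectness of $C$ is indeed the condition controlling the radical of left $C$-comodules before finalizing this step.
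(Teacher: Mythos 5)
Your proposal matches the paper's proof: both reduce the statement to Corollary \ref{c:socrad} after noting that the comodule category is Grothendieck (hence AB3 and AB3*), and then verify the two hypotheses, namely that ${\rm Soc}(M)$ is essential in every non-zero left $C$-comodule and that ${\rm Rad}(M)$ is superfluous when $C$ is right semiperfect. The one step you defer to the literature --- extracting superfluity of the radical from right semiperfectness, with the left--right crossover you rightly flag --- is exactly what the paper takes from \cite{DNR}: right semiperfectness gives ${\rm Rad}(M)\neq M$ and the existence of maximal subcomodules for non-zero left $C$-comodules, from which superfluity follows in two lines.
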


\begin{proof} First note that the socle of any non-zero left $C$-comodule $M$ is an essential subcomodule of $M$ \cite[Corollary~2.4.12]{DNR}. Next let $X$ be a left $C$-comodule such that ${\rm Rad}(M)+X=M$. Since $C$ is right semiperfect, we have ${\rm Rad}(M)\neq M$ by \cite[Proposition~3.2.2 and Corollary~3.2.6]{DNR}, and thus $X\neq 0$. Again by \cite[Corollary~3.2.6]{DNR}, $X$ contains a maximal subcomodule, say $Y$. It follows that ${\rm Rad}(M)\subseteq Y\subset X$, which implies $X=M$. Hence ${\rm Rad}(M)$ is a superfluous subcomodule of $M$. Finally, use Corollary \ref{c:socrad}.
\end{proof}

\section{(Co)products of (dual) relative CS-Baer objects}

Now we analyze the behaviour of (strongly) relative CS-Baer objects and their duals with respect to direct summands and (co)products. 

\begin{coll} \label{t:sdr1} Let $r:M\to M'$ be an epimorphism and $s:N'\to N$ a monomorphism in an abelian category $\mathcal{A}$. 
\begin{enumerate}
\item Assume that $\mathcal{A}$ is AB3* and $r$ is a retraction. If $N$ is (strongly) $M$-CS-Baer, then $N'$ is (strongly) $M'$-CS-Baer.
\item Assume that $\mathcal{A}$ is AB3 and $s$ is a section. If $N$ is dual (strongly) $M$-CS-Baer, then $N'$ is dual (strongly) $M'$-CS-Baer.
\end{enumerate}
\end{coll}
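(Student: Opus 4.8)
The plan is to prove statement (1) directly and to obtain (2) by the duality principle in abelian categories, noting that the dual of a retraction is a section and the dual of the (strongly) $M$-CS-Baer property is the dual (strongly) $M$-CS-Baer property; this is exactly why the splitting hypothesis sits on $r$ in (1) but migrates to $s$ in (2). For (1), since $r:M\to M'$ is a retraction, I would fix a section $t:M'\to M$ with $rt=1_{M'}$ and set $K={\rm Ker}(r)$, so that $M=K\oplus t(M')$ with $t(M')\cong M'$, and $r$ restricts to an isomorphism $t(M')\to M'$ with inverse $t$.

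Given a family $(g_i)_{i\in I}$ of morphisms $g_i:M'\to N'$, the idea is to pull them back to $M$ by setting $f_i=s g_i r:M\to N$. Because $s$ is a monomorphism, ${\rm Ker}(f_i)={\rm Ker}(g_i r)$, and the splitting $M=K\oplus t(M')$ gives ${\rm Ker}(g_i r)=K\oplus t({\rm Ker}(g_i))$. Hence, writing $E=\bigcap_{i\in I}{\rm Ker}(g_i)$, we obtain $\bigcap_{i\in I}{\rm Ker}(f_i)=K\oplus t(E)$. Applying the hypothesis that $N$ is (strongly) $M$-CS-Baer to the family $(f_i)_{i\in I}$ shows that $K\oplus t(E)$ is essential in a (fully invariant) direct summand $L$ of $M$.

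The heart of the argument is then to descend this along the isomorphism $M'\cong M/K$. Since $K\subseteq L$ and $K$ is a direct summand of $M$, the modular law yields $L=K\oplus(L\cap t(M'))$; setting $L''=L\cap t(M')$ and observing that $L/K$ is a direct summand of $M/K$ (as $K\subseteq L$ and $L$ is a summand of $M$), one checks that $L''$ is a direct summand of $t(M')$, so that $L'=r(L'')$ is a direct summand of $M'$. A short essentiality check, namely that any $X\subseteq L''$ meeting $t(E)$ trivially also meets $K\oplus t(E)$ trivially inside $K\oplus L''$ and must therefore vanish, shows that $t(E)$ is essential in $L''$; transporting along the isomorphism $r\colon t(M')\to M'$ gives that $E$ is essential in the direct summand $L'$ of $M'$, so $N'$ is $M'$-CS-Baer.

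The main obstacle is the strongly case, where I must upgrade \emph{direct summand} to \emph{fully invariant direct summand} on $M'$. I would handle it by lifting an arbitrary endomorphism $h':M'\to M'$ to $h=t h' r:M\to M$, which annihilates $K$ and restricts to $h'$ on $t(M')$. Full invariance of $L$ in $M$ gives $h(L)\subseteq L$; since $h(L)=t(h'(L'))\subseteq t(M')$ and $(K\oplus L'')\cap t(M')=L''$, we get $t(h'(L'))\subseteq L''$, and applying $r$ yields $h'(L')\subseteq L'$, as needed. Finally, statement (2) follows by dualizing (1); alternatively, one could reduce the whole corollary to the relative CS-Rickart setting through Proposition \ref{csr-csb}, provided the analogous transfer property for (strongly) relative CS-Rickart objects is invoked.
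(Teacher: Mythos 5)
Your proof is correct, but it takes a genuinely different route from the paper. The paper's argument is a three-line reduction: by Proposition \ref{csr-csb}, $N$ is (strongly) $M$-CS-Baer iff $N^I$ is (strongly) $M$-CS-Rickart for every set $I$; the already-established transfer theorem for (strongly) relative CS-Rickart objects along a retraction $r$ and the monomorphism $s^I:N'^I\to N^I$ (Theorem~3.1 of \cite{CR1}, resp.\ \cite{CR3}) then gives that $N'^I$ is (strongly) $M'$-CS-Rickart for every $I$, and a second application of Proposition \ref{csr-csb} finishes. This is exactly the alternative you mention in your last sentence. What you do instead is a self-contained direct argument: pulling a family $(g_i)$ back to $(sg_ir)$, computing $\bigcap_i{\rm Ker}(sg_ir)=K\oplus t(E)$ with $K={\rm Ker}(r)$, and descending the resulting (fully invariant) summand $L\supseteq K$ of $M$ to a summand $L'=r(L\cap t(M'))$ of $M'$ via the modular law, with separate essentiality and full-invariance checks. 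Both are sound; the paper's version buys brevity and reuses the CS-Rickart machinery (which is the whole point of Proposition \ref{csr-csb}), while yours buys independence from the external citation --- in effect you re-prove the relevant case of the CS-Rickart transfer theorem inside the Baer setting. The only caveat is cosmetic: your element-style manipulations (e.g.\ $\bigcap_i\bigl(K\oplus t({\rm Ker}(g_i))\bigr)=K\oplus t(E)$, and the identification of $L\cap t(M')$ with a summand of $t(M')$) must be read as statements in the subobject lattice of an object of $\mathcal{A}$, where they all hold by modularity, so nothing is actually lost in the general abelian-category setting.
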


\begin{proof}
(1)  If $N$ is (strongly) $M$-CS-Baer, then $N^I$ is (strongly) $M$-CS-Rickart for every set $I$ by Proposition~\ref{csr-csb}. Then $N'^I$ is (strongly) $M'$-CS-Rickart for every set $I$ by \cite [Theorem~3.1]{CR1} (\cite [Theorem~3.1]{CR3}). Therefore, $N'$ is (strongly) $M'$-CS-Baer by Proposition~\ref{csr-csb}.
\end{proof}

The following corollary generalizes the module-theoretic result \cite[Theorem~4]{Nhan}.

\begin{coll} \label{c:sdb5} Let $M$ and $N$ be objects of an abelian category $\mathcal{A}$, 
$M'$ a direct summand of $M$ and $N'$ a direct summand of $N$. 
\begin{enumerate} 
\item Assume that $\mathcal{A}$ is AB3*, and  $N$ is (strongly) $M$-CS-Baer, then $N'$ is (strongly) $M'$-CS-Baer.
\item Assume that $\mathcal{A}$ is AB3, and $N$ is dual (strongly) $M$-CS-Baer, then $N'$ is dual (strongly) $M'$-CS-Baer.
\end{enumerate}
\end{coll}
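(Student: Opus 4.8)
The plan is to deduce Corollary \ref{c:sdb5} directly from the preceding Corollary \ref{t:sdr1} by exhibiting the appropriate retraction and section that witness the direct summand hypotheses. This is the natural strategy because Corollary \ref{t:sdr1} is stated for an epimorphism $r:M\to M'$ that is a retraction and a monomorphism $s:N'\to N$ that is a section, and a direct summand is precisely a subobject/factor object that arises as the image of a section or the target of a retraction.

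For part (1), first I would observe that since $M'$ is a direct summand of $M$, the canonical projection $r:M\to M'$ is an epimorphism that is a retraction (its right inverse being the inclusion of the summand $M'$ into $M$). Similarly, since $N'$ is a direct summand of $N$, the inclusion $s:N'\to N$ is a monomorphism that is a section (its left inverse being the projection of $N$ onto $N'$). With these two morphisms in hand, the hypothesis that $N$ is (strongly) $M$-CS-Baer immediately yields, by Corollary \ref{t:sdr1}(1), that $N'$ is (strongly) $M'$-CS-Baer. Part (2) is entirely dual: the inclusion $s:N'\to N$ of the direct summand $N'$ is a section, the projection $r:M\to M'$ onto the direct summand $M'$ is a retraction, and Corollary \ref{t:sdr1}(2) then gives that $N'$ is dual (strongly) $M'$-CS-Baer.

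I do not anticipate any genuine obstacle here, since the statement is essentially a specialization of Corollary \ref{t:sdr1}: the only content is recognizing that the direct-summand hypotheses automatically supply a retraction $M\to M'$ and a section $N'\to N$. The mild subtlety to be careful about is matching the two sides correctly — in part (1) the relevant retraction is on the $M$-side (the source object being reduced to its summand $M'$) while the section is on the $N$-side (the target summand $N'$ being included into $N$), and one should verify these are exactly the data Corollary \ref{t:sdr1}(1) requires. Thus the proof is a short invocation of the previous corollary, with the two canonical summand maps playing the roles of $r$ and $s$.
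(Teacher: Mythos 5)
Your proposal is correct and matches the paper's intent exactly: the paper states Corollary \ref{c:sdb5} without proof precisely because it is the specialization of Corollary \ref{t:sdr1} to the canonical projection $r:M\to M'$ (a retraction) and the canonical inclusion $s:N'\to N$ (a section, hence in particular a monomorphism). Your identification of which summand map plays which role on each side is the right one, and nothing further is needed.
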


\begin{ex} \label{ec1} \rm (i) Consider the ring $R=\begin{pmatrix}\mathbb{Z}&\mathbb{Z}\\0&\mathbb{Z} \end{pmatrix}$
and the right $R$-modules $M_1=\begin{pmatrix}\mathbb{Z}&\mathbb{Z}\\0&0 \end{pmatrix}$ 
and $M_2=\begin{pmatrix}0&0\\0&\mathbb{Z} \end{pmatrix}$. Since
$M_1$ and $M_2$ are self-Baer right $R$-modules, $M_1$ and $M_2$ are self-CS-Baer. 
But we have seen in \cite[Example ~4.8]{CR1} that $R=M_1\oplus M_2$ is not a self-CS-Rickart right $R$-module, hence  $R$ is not a self-CS-Baer right $R$-module. 

(ii) The $\mathbb{Z}$-modules $\mathbb{Z}_2$ and $\mathbb{Z}_{16}$ are dual self-CS-Baer,
but the $\mathbb{Z}$-module $\mathbb{Z}_2\oplus \mathbb{Z}_{16}$ is not dual self-CS-Baer, because the $\mathbb{Z}$-module $\mathbb{Z}_2\oplus \mathbb{Z}_{16}$ is not dual self-CS-Rickart \cite[Example~4.8]{CR1}.
\end{ex}

\begin{theo} \label{t:dsum} 
Let $\mathcal{A}$ be an abelian category.
       \begin{enumerate}
                 \item Assume that $\mathcal{A}$ is AB3*. Then $\bigoplus\limits_{i=1}  
                         ^nN_i$ is (strongly) $M$-CS-Baer \, if and only if \,  $N_i$ is
                         (strongly) $M$-CS-Baer for any \, $i\in \{1,\dots ,n\}$.
                 \item Assume that $\mathcal{A}$ is AB3. Then $N$ is dual 
                          (strongly) $\bigoplus\limits_{i=1}^nM_i$-CS-Baer if and only if $N$  
                          is dual (strongly) $M_i$-CS-Baer for any $i\in \{1,\dots ,n\}$.
       \end{enumerate}
\end{theo}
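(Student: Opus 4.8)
The plan is to leverage Proposition \ref{csr-csb}, which reduces the (dual) CS-Baer property to a (dual) CS-Rickart property over a suitable power/copower, together with the known behaviour of (strongly) relative CS-Rickart objects under finite direct sums. Since the two statements are formally dual under the duality principle in abelian categories, I would prove (1) in detail and obtain (2) by dualization; I focus on (1).

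First I would establish the forward implication. Assume $\bigoplus_{i=1}^n N_i$ is (strongly) $M$-CS-Baer. Each $N_i$ is a direct summand of $\bigoplus_{i=1}^n N_i$, so by Corollary \ref{c:sdb5}(1) (taking $M'=M$ and $N'=N_i$) we immediately conclude that $N_i$ is (strongly) $M$-CS-Baer for every $i$. This direction is essentially free once the summand-closure corollary is invoked.

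For the converse, the strategy is to pass through Proposition \ref{csr-csb}(1). Suppose each $N_i$ is (strongly) $M$-CS-Baer; then for every set $I$, each $N_i^I$ is (strongly) $M$-CS-Rickart. The key observation is that $\bigl(\bigoplus_{i=1}^n N_i\bigr)^I \cong \prod_{i=1}^n N_i^I$ (a finite coproduct coincides with the finite product, and the product functor commutes with finite direct sums), so it suffices to show that a finite direct sum of (strongly) $M$-CS-Rickart objects is again (strongly) $M$-CS-Rickart. This is exactly the finite-coproduct stability of relative CS-Rickart objects established in the cited companion papers (\cite[Theorem~3.1]{CR1} for the CS-Rickart case and \cite[Theorem~3.1]{CR3} for the strong case, as already used in the proof of Corollary \ref{t:sdr1}). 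Applying that result, $\bigl(\bigoplus_{i=1}^n N_i\bigr)^I$ is (strongly) $M$-CS-Rickart for every set $I$, and a final application of Proposition \ref{csr-csb}(1) yields that $\bigoplus_{i=1}^n N_i$ is (strongly) $M$-CS-Baer.

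The main obstacle I anticipate is making the isomorphism $\bigl(\bigoplus_{i=1}^n N_i\bigr)^I \cong \prod_{i=1}^n N_i^I$ fully rigorous and verifying that the finite-coproduct stability theorem for CS-Rickart objects is stated in a form general enough to apply over an arbitrary index set $I$ — in particular that it handles the family of component morphisms coherently so that the intersection of kernels behaves correctly. If the cited CS-Rickart result is only stated for a fixed target rather than for each power $N_i^I$, one must check that the argument applies verbatim to $N_i^I$ in place of $N_i$; since $N_i^I$ is itself (strongly) $M$-CS-Rickart by the hypothesis reformulated through Proposition \ref{csr-csb}, this should go through without difficulty. The strong versus non-strong bookkeeping (fully invariant direct summands) is handled uniformly by citing the appropriate companion result in each case, so no separate argument is needed.
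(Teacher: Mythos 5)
Your proof is correct and follows essentially the same route as the paper: the paper likewise reduces via Proposition \ref{csr-csb} to the statement that $\left(\bigoplus_{i=1}^n N_i\right)^I \simeq \bigoplus_{i=1}^n N_i^I$ is (strongly) $M$-CS-Rickart for every set $I$ and then invokes the finite direct sum result for relative CS-Rickart objects. One small correction: the finite-coproduct stability you need is \cite[Theorem~4.2]{CR1} (\cite[Theorem~4.2]{CR3}), not Theorem~3.1 of those papers (the latter is the retraction/section transfer result used in Corollary \ref{t:sdr1}); with that citation fixed, the argument matches the paper's.
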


\begin{proof} (1) By Proposition~\ref{csr-csb}, $\bigoplus\limits_{i=1}^nN_i$ is (strongly) $M$-CS-Baer if and only if $\bigoplus\limits_{i=1}^nN_i^I\simeq \left(\bigoplus\limits_{i=1}^nN_i\right)^I$ is (strongly) $M$-CS-Rickart for every set $I$. By \cite[Theorem~4.2]{CR1} (\cite[Theorem~4.2]{CR3}) we deduce that $N_i^I$ is (strongly) $M$-CS-Rickart for every $i\in \{1,\dots ,n\}$, and every set $I$. Then $N_i$ is (strongly) $M$-CS-Baer for every $i\in \{1,\dots ,n\}$, again by  Proposition~\ref{csr-csb}.
\end{proof}

Following \cite{DHSW,TV}, for any right $R$-module $M$, denote by $Z_2(M)$ the submodule of $M$ defined by $Z_2(M)/Z(M)=Z(M/Z(M))$ and $\overline{Z}^2(M)=\overline{Z}(\overline{Z}(M))$. Now we may give the following application of Theorem \ref{t:dsum}.

\begin{coll} Let $R$ be a unitary ring, and let $M$ be a right $R$-module. Then:
\begin{enumerate}
\item $Z_2(M)$ and $M/Z_2(M)$ are (strongly) $M$-CS-Baer if and only if $Z_2(M)$ is a direct summand of $M$ and $M$ is (strongly) self-CS-Baer.
\item $M$ is dual (strongly) $\overline{Z}^2(M)$-CS-Baer and dual (strongly) $M/\overline{Z}^2(M)$-CS-Baer if and only if $\overline{Z}^2(M)$ is a direct summand of $M$ and $M$ is dual (strongly) self-CS-Baer.
\end{enumerate}
\end{coll}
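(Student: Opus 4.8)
The plan is to reduce both equivalences to Theorem~\ref{t:dsum} once it is known that $Z_2(M)$ (respectively $\overline{Z}^2(M)$) is a direct summand of $M$. Throughout I would use two elementary observations: that the (strongly) relative CS-Baer property is invariant under isomorphism of the object in question, and that a (strongly) $M$-CS-Baer object is in particular (strongly) $M$-CS-Rickart (restrict to a one-element family), and dually.

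For part (1), I would argue as follows. For the backward implication, assume $Z_2(M)$ is a direct summand and $M$ is (strongly) self-CS-Baer, and write $M=Z_2(M)\oplus M'$ with $M'\cong M/Z_2(M)$. Since $M$ is (strongly) $M$-CS-Baer, Theorem~\ref{t:dsum}(1) applied to this decomposition yields that both $Z_2(M)$ and $M'$, hence also $M/Z_2(M)\cong M'$, are (strongly) $M$-CS-Baer. For the forward implication, assume $Z_2(M)$ and $M/Z_2(M)$ are (strongly) $M$-CS-Baer. Applying the (strongly) $M$-CS-Rickart property of $M/Z_2(M)$ to the canonical projection $\pi:M\to M/Z_2(M)$, whose kernel is $Z_2(M)$, I get that $Z_2(M)$ is essential in a direct summand $K$ of $M$. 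Here I invoke the standard fact that $Z_2(M)$ is closed in $M$ (it has no proper essential extension), which forces $Z_2(M)=K$; thus $Z_2(M)$ is a direct summand and $M=Z_2(M)\oplus M'$ with $M'\cong M/Z_2(M)$. Both summands $Z_2(M)$ and $M'\cong M/Z_2(M)$ are (strongly) $M$-CS-Baer by hypothesis, so Theorem~\ref{t:dsum}(1) gives that $M=Z_2(M)\oplus M'$ is (strongly) $M$-CS-Baer, i.e. (strongly) self-CS-Baer.

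Part (2) is the dual statement, to be carried out by the same two-step pattern, now using Theorem~\ref{t:dsum}(2) (which decomposes the source object) and the dual closure fact. For the backward implication, if $\overline{Z}^2(M)$ is a direct summand, write $M=\overline{Z}^2(M)\oplus M''$ with $M''\cong M/\overline{Z}^2(M)$; since $M$ is dual (strongly) $M$-CS-Baer and $M=\overline{Z}^2(M)\oplus M''$, Theorem~\ref{t:dsum}(2) yields that $M$ is dual (strongly) $\overline{Z}^2(M)$-CS-Baer and dual (strongly) $M''$-CS-Baer, hence also dual (strongly) $M/\overline{Z}^2(M)$-CS-Baer. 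For the forward implication, apply the dual (strongly) $\overline{Z}^2(M)$-CS-Rickart property to the inclusion $\iota:\overline{Z}^2(M)\to M$, whose image is $\overline{Z}^2(M)$: this shows that $\overline{Z}^2(M)$ lies above a direct summand $K$ of $M$. Using that $\overline{Z}^2(M)$ is coclosed in $M$ (the dual of the closedness of $Z_2(M)$), I conclude $K=\overline{Z}^2(M)$, so $\overline{Z}^2(M)$ is a direct summand; then Theorem~\ref{t:dsum}(2) upgrades the two dual relative CS-Baer hypotheses to dual (strongly) self-CS-Baer exactly as before.

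The routine parts—the passage between a finite direct sum and its summands, and isomorphism invariance—are handled entirely by Theorem~\ref{t:dsum}. The main obstacle, and the only genuinely non-formal input, is converting the conclusions of the CS-Rickart properties (``essential in a direct summand'' in part (1) and ``lies above a direct summand'' in part (2)) into the assertion that $Z_2(M)$, respectively $\overline{Z}^2(M)$, is itself a direct summand. This is precisely where the closedness of $Z_2(M)$ and the coclosedness of $\overline{Z}^2(M)$ are needed, and these are the facts I would quote from the nonsingular and cosingular module theory of \cite{Goodearl,DHSW,TV}.
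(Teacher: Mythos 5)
Your proof is correct and follows essentially the same route as the paper: both directions reduce to Theorem \ref{t:dsum} once the splitting $M\cong Z_2(M)\oplus M/Z_2(M)$ (resp. $M\cong \overline{Z}^2(M)\oplus M/\overline{Z}^2(M)$) is established, the paper obtaining that splitting by citing the proof of \cite[Corollary~4.3]{CR1} while you reconstruct it directly from the relative CS-Rickart property of the projection (resp. inclusion) together with the closedness of $Z_2(M)$ and the coclosedness of $\overline{Z}^2(M)$, and the backward implication is the summand-hereditary direction (the paper quotes Corollary \ref{c:sdb5}, you use the corresponding direction of Theorem \ref{t:dsum}). The only substantive input you add is exactly the content of the cited proof, so there is nothing to correct.
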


\begin{proof} (1) Assume that $Z_2(M)$ and $M/Z_2(M)$ are (strongly) $M$-CS-Baer. By the proof of \cite[Corollary~4.3]{CR1}, we have $M\cong Z_2(M)\oplus M/Z_2(M)$. Then $M$ is (strongly)-CS-Baer by Theorem \ref{t:dsum}. The converse holds by Corollary \ref{c:sdb5}.

(2) Assume that  $M$ is dual (strongly) $\overline{Z}^2(M)$-CS-Baer and dual (strongly) $M/\overline{Z}^2(M)$-CS-Baer. By the proof of \cite[Corollary~4.3]{CR1}, we have $M\cong \overline{Z}^2(M)\oplus M/\overline{Z}^2(M)$. Then $M$ is dual (strongly) self-CS-Baer by Theorem \ref{t:dsum}. The converse holds by Corollary \ref{c:sdb5}.
\end{proof}

\begin{prop} Let $(M_i)_{i\in I}$ be a family of objects of an abelian category $\mathcal{A}$.
        \begin{enumerate}
                 \item If $\mathcal{A}$ is AB3* and $\prod\limits_{i\in I}M_i$ is    
                         (strongly) self-CS-Baer, then $M_i$ is (strongly) $M_j$-CS-Rickart for every 
                         $i,j\in I$, and $M_i$ is (strongly) self-CS-Baer for every $i\in I$.
                \item If $\mathcal{A}$ is AB3 and $\bigoplus\limits_{i\in I}M_i$    
                         is dual (strongly) self-CS-Baer, then $M_i$ is dual (strongly) $M_j$-CS-Rickart  
                         for every $i,j\in I$, and $M_i$ is dual (strongly) self-CS-Baer for every $i\in I$.
       \end{enumerate}
\end{prop}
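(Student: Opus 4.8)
The plan is to exploit the elementary fact that every factor of a product (respectively coproduct) is a direct summand of it, and then to invoke the behaviour of the (dual, strong) relative CS-Baer property under passage to direct summands, recorded in Corollary~\ref{c:sdb5}. Throughout, write $M=\prod_{i\in I}M_i$ in case (1) and $M=\bigoplus_{i\in I}M_i$ in case (2).

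First I would record the summand claim explicitly. In an AB3* abelian category the product $\prod_{i\in I}M_i$ carries canonical projections $p_j$, and by the universal property of the product there is for each $i\in I$ a morphism $u_i:M_i\to \prod_{j\in I}M_j$ whose $i$-th component is $1_{M_i}$ and whose remaining components vanish; then $p_iu_i=1_{M_i}$, so $u_i$ is a section and $M_i$ is a direct summand of $M$. Dually, in an AB3 abelian category each $M_i$ is a direct summand of $\bigoplus_{j\in I}M_j$ via the canonical injection $\iota_i$, since the induced projection splits it.

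For part (1), assume $M$ is (strongly) self-CS-Baer, i.e.\ (strongly) $M$-CS-Baer. Fix $i,j\in I$. Since $M_i$ is a direct summand of $N:=M$ and $M_j$ is a direct summand of $M':=M$, Corollary~\ref{c:sdb5}(1) yields that $M_i$ is (strongly) $M_j$-CS-Baer; because every (strongly) relative CS-Baer object is (strongly) relative CS-Rickart (cf.\ the remarks following the definition of relative CS-Baer objects in Section~3), $M_i$ is in particular (strongly) $M_j$-CS-Rickart. Specialising to $M'=N'=M_i$ in the same application of Corollary~\ref{c:sdb5}(1) gives that $M_i$ is (strongly) $M_i$-CS-Baer, that is, (strongly) self-CS-Baer. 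This establishes both assertions of (1).

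Part (2) is then obtained either by the duality principle in abelian categories, or equally by repeating the argument with Corollary~\ref{c:sdb5}(2) in place of Corollary~\ref{c:sdb5}(1), the coproduct in place of the product, and the dual notions throughout. I do not expect any genuine difficulty here: the only point requiring care is the summand observation of the second paragraph, which is why I would state it before applying the corollary. Once it is in hand, nothing further is needed beyond the single implication from the relative CS-Baer property to the relative CS-Rickart property, so the whole proof reduces to two invocations of Corollary~\ref{c:sdb5} together with this implication.
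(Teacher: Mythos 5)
Your argument is correct, and it takes a slightly different (and in fact more economical) route than the paper. The paper's own proof is a two-line citation: it quotes the corresponding product result for (strongly) self-CS-Rickart objects from the CS-Rickart papers to get that each $M_i$ is (strongly) $M_j$-CS-Rickart, and then invokes Corollary~\ref{c:sdb5} for the self-CS-Baer conclusion. You instead derive everything from Corollary~\ref{c:sdb5} alone, after the (correct) observation that each factor $M_i$ is a direct summand of $\prod_{i\in I}M_i$ via the canonical section $u_i$ with $p_iu_i=1_{M_i}$ (dually for the coproduct). This buys you two things: the proof stays entirely inside the present paper, with no appeal to the external CS-Rickart propositions, and it actually yields the stronger conclusion that $M_i$ is (strongly) $M_j$-CS-\emph{Baer} for all $i,j\in I$, from which the stated $M_j$-CS-Rickart property follows by the trivial implication noted in Section~3. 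The only blemish is notational: when you instantiate Corollary~\ref{c:sdb5} you write ``$M_j$ is a direct summand of $M':=M$,'' whereas in the corollary $M'$ denotes the summand, so you mean to take the corollary's $M=N=\prod_{i\in I}M_i$, $M'=M_j$ and $N'=M_i$; the intent is clear and the application is legitimate.
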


\begin{proof}
(1) Use \cite[Proposition~4.5]{CR1} (\cite[Proposition~4.4]{CR3}) and Corollary \ref{c:sdb5}.
\end{proof}

\begin{theo}

 Let $\mathcal{A}$ be an abelian category.
\begin{enumerate}
\item Assume that $\mathcal{A}$ is AB3*. Let $M$ be a (strictly) SSIP-extending object of $\mathcal{A}$, and let $(N_i)_{i\in I}$ be a family of objects of $\mathcal{A}$. Then $\prod\limits_{i\in I} N_i$ is (strongly) 
$M$-CS-Baer if and only if $N_i$ is (strongly) $M$-CS-Baer for every $i\in I$.
\item Assume that $\mathcal{A}$ is AB3. Let $(M_i)_{i\in I}$ be a family of objects of $\mathcal{A}$, and let $N$ be a (strictly) SSSP-lifting object of $\mathcal{A}$. Then $N$ is dual (strongly) $\bigoplus\limits_{i\in I} M_i$-CS-Baer if and only if $N$ is dual (strongly) $M_i$-CS-Baer for every $i\in I$.
\end{enumerate}
\end{theo}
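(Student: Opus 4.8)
The plan is to prove part (1) in full and to deduce part (2) by the duality principle in abelian categories, since the hypotheses of (2) are the formal duals of those in (1). I would first dispose of the implication ``$\prod_{i\in I}N_i$ (strongly) $M$-CS-Baer $\Rightarrow$ each $N_i$ (strongly) $M$-CS-Baer'', which needs no summand hypothesis: each canonical injection $N_i\to\prod_{i\in I}N_i$ is a section (a retraction being the projection $p_i$), so $N_i$ is a direct summand of $\prod_{i\in I}N_i$, and Corollary \ref{c:sdb5} gives at once that $N_i$ is (strongly) $M$-CS-Baer.

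For the converse I would argue straight from the definition, using the universal property of the product. Given a family $(g_j)_{j\in J}$ of morphisms $g_j:M\to\prod_{i\in I}N_i$, write $p_i:\prod_{i\in I}N_i\to N_i$ for the projections and put $f_{ij}=p_ig_j$. Since the kernel of a morphism into a product is the intersection of the kernels of its components,
\[
\bigcap_{j\in J}{\rm Ker}(g_j)=\bigcap_{(i,j)\in I\times J}{\rm Ker}(f_{ij})=\bigcap_{i\in I}\Big(\bigcap_{j\in J}{\rm Ker}(f_{ij})\Big).
\]
For each fixed $i$ the family $(f_{ij})_{j\in J}$ maps $M$ into $N_i$, so the hypothesis that $N_i$ is (strongly) $M$-CS-Baer makes $K_i:=\bigcap_{j\in J}{\rm Ker}(f_{ij})$ essential in a (fully invariant) direct summand of $M$; in particular each $K_i$ is essential in a direct summand. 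Now the (strict) SSIP-extending property of $M$ applies to the family $(K_i)_{i\in I}$ and yields that $\bigcap_{i\in I}K_i$ is essential in a (fully invariant) direct summand of $M$. As this intersection equals $\bigcap_{j\in J}{\rm Ker}(g_j)$, the object $\prod_{i\in I}N_i$ is (strongly) $M$-CS-Baer. (Alternatively, one may first check that $\prod_{i\in I}N_i$ is (strongly) $M$-CS-Rickart by applying the same argument to a single morphism and then invoke Theorem \ref{csb-ss}.)

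Part (2) is obtained by dualising: coproducts replace products, the injections $u_i:M_i\to\bigoplus_{i\in I}M_i$ replace the projections, images replace kernels, and ``lies above a direct summand'' replaces ``essential in a direct summand''. The dual of the displayed identity reads $\sum_{j\in J}{\rm Im}(g_j)=\sum_{i\in I}\big(\sum_{j\in J}{\rm Im}(g_ju_i)\big)$; since $N$ is dual (strongly) $M_i$-CS-Baer, each inner sum lies above a (fully invariant) direct summand of $N$, and the (strict) SSSP-lifting property of $N$ then forces the whole sum to lie above a (fully invariant) direct summand of $N$. The only point I would watch carefully is that the summand-intersection (respectively summand-sum) hypothesis is stated for \emph{arbitrary} families: this is exactly what lets it be applied to the possibly infinite index set $I$, and it is precisely the ingredient missing from the finite-coproduct result Theorem \ref{t:dsum}, whose role is thereby upgraded from finite sums to arbitrary products.
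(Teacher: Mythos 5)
Your argument is correct, but it takes a genuinely different route from the paper's. The paper proves part (1) by reduction: it invokes Proposition \ref{csr-csb} to translate ``$\prod_{i\in I}N_i$ is (strongly) $M$-CS-Baer'' into ``$\bigl(\prod_{i\in I}N_i\bigr)^J\cong\prod_{i\in I}N_i^J$ is (strongly) $M$-CS-Rickart for every set $J$'', then cites the already-established product theorem for (strongly) relative CS-Rickart objects over a (strictly) SSIP-extending domain (\cite[Theorem~4.7]{CR1}, \cite[Theorem~4.7]{CR3}), and translates back. You instead argue directly from the definition: you split the kernels of a family $(g_j)_{j\in J}$ of morphisms into the product via the projections, use the hypothesis on each $N_i$ to make each $K_i=\bigcap_{j\in J}\mathrm{Ker}(p_ig_j)$ essential in a (fully invariant) direct summand, and then feed the family $(K_i)_{i\in I}$ into the (strict) SSIP-extending property of $M$; the dual argument for (2) is spelled out analogously. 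Both arguments are sound. Yours is self-contained and makes visible exactly where the arbitrary-family form of SSIP-extending is used (your closing remark about why Theorem \ref{t:dsum} is confined to finite index sets is apt); the paper's is shorter but outsources the combinatorial content to the external CS-Rickart result. One small point of bookkeeping: in your easy direction you only need Corollary \ref{c:sdb5} applied to the summand $N_i$ of $\prod_{i\in I}N_i$ with $M'=M$, which is exactly how the paper would handle it as well, so no issue there.
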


\begin{proof} (1) By Proposition \ref{csr-csb}, $\prod\limits_{i\in I} N_i$ is (strongly) $M$-CS-Baer if and only if $\left(\prod\limits_{i\in I} N_i\right)^J\cong \prod\limits_{i\in I} N_i^J$ is (strongly) $M$-CS-Rickart for every set $J$. By \cite[Theorem~4.7]{CR1} (\cite[Theorem~4.7]{CR3}), this is equivalent to $N_i^J$ being (strongly) $M$-CS-Rickart for every $i\in I$ and every set $J$. But this means that $N_i$ is  (strongly) $M$-CS-Baer for every $i\in I$ by Proposition \ref{csr-csb}.
\end{proof}

Next we study the behaviour of the (strong) self-CS-Baer property with respect to direct sum decompositions.  

\begin{theo} \label{f-invariant-summands} Let $\mathcal{A}$ be an abelian category, 
and let $M=\bigoplus_{i\in I}M_i$ be a direct sum decomposition in $\mathcal{A}$ for some finite set $I$. Then:
\begin{enumerate}
\item 
\begin{enumerate}[(i)]
\item If $\Hom_{\mathcal{A}}(M_i,M_j)=0$ for every $i,j\in I$ with $i\neq j$, then $M$ is self-CS-Baer if and only if $M_i$ is self-CS-Baer for each $i\in I$.
\item $M$ is strongly self-CS-Baer if and only if $M_i$ is strongly self-CS-Baer for each $i\in I$ and $\Hom_{\mathcal{A}}(M_i,M_j)=0$ for every $i,j\in I$ with $i\neq j$.
\end{enumerate}
\item 
\begin{enumerate}[(i)]
\item If $\Hom_{\mathcal{A}}(M_i,M_j)=0$ for every $i,j\in I$ with $i\neq j$, then $M$ is  dual self-CS-Baer if and only if $M_i$ is dual self-CS-Baer for each $i\in I$.
\item $M$ is  dual strongly self-CS-Baer if and only if $M_i$ is dual strongly self-CS-Baer for each $i\in I$ and $\Hom_{\mathcal{A}}(M_i,M_j)=0$ for every $i,j\in I$ with $i\neq j$.
\end{enumerate}
\end{enumerate}
\end{theo}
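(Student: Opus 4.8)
The plan is to exploit the hypothesis $\Hom_{\A}(M_i,M_j)=0$ for $i\neq j$ through one basic observation. Writing $\iota_i\colon M_i\to M$ and $p_i\colon M\to M_i$ for the structural inclusions and projections of $M=\bigoplus_{i\in I}M_i$, every endomorphism $h$ of $M$ is then \emph{diagonal}: each off-diagonal component $p_jh\iota_i\colon M_i\to M_j$ lies in $\Hom_{\A}(M_i,M_j)=0$, so $h=\bigoplus_{i\in I}h_i$ with $h_i=p_ih\iota_i\in\End_{\A}(M_i)$. Hence ${\rm Ker}(h)=\bigoplus_{i\in I}{\rm Ker}(h_i)$, and for a family $(f^{(k)})_{k\in K}$ in $\End_{\A}(M)$ the intersection distributes over the finite decomposition,
$$\bigcap_{k\in K}{\rm Ker}(f^{(k)})=\bigoplus_{i\in I}\Big(\bigcap_{k\in K}{\rm Ker}(f_i^{(k)})\Big).$$
The same observation yields a ring isomorphism $\End_{\A}(M)\cong\prod_{i\in I}\End_{\A}(M_i)$ with componentwise multiplication, which will drive the strongly case. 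I shall also use the standard facts that over a finite direct sum essentiality is componentwise and that a finite direct sum of direct summands is again a direct summand.

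For part (1)(i), the forward implication needs no hypothesis on the Hom groups: if $M$ is self-CS-Baer, then Corollary \ref{c:sdb5} applied with $N=M$ and $N'=M'=M_i$ shows at once that each $M_i$ is self-CS-Baer. For the converse, assume each $M_i$ is self-CS-Baer and let $(f^{(k)})_{k\in K}$ be a family in $\End_{\A}(M)$. Writing each $f^{(k)}=\bigoplus_{i\in I}f_i^{(k)}$ and using the displayed distributivity, each $\bigcap_{k\in K}{\rm Ker}(f_i^{(k)})$ is essential in a direct summand $K_i$ of $M_i$; hence $\bigcap_{k\in K}{\rm Ker}(f^{(k)})$ is essential in $\bigoplus_{i\in I}K_i$, a direct summand of $M$. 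Thus $M$ is self-CS-Baer.

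For part (1)(ii) I would replace ``strongly self-CS-Baer'' throughout by ``self-CS-Baer with abelian endomorphism ring'', using Corollary \ref{st00}. The heart of the matter is the equivalence
$$\End_{\A}(M)\text{ abelian}\iff\Hom_{\A}(M_i,M_j)=0\ (i\neq j)\text{ and each }\End_{\A}(M_i)\text{ abelian.}$$
For $(\Rightarrow)$, centrality of the idempotents $e_i=\iota_ip_i$ forces every $h=\iota_j\varphi p_i$, with $\varphi\colon M_i\to M_j$ and $i\neq j$, to satisfy $e_ih=0$ and $he_i=h$, whence $h=0$ and so $\varphi=0$; once the off-diagonal Hom groups vanish, the isomorphism $\End_{\A}(M)\cong\prod_{i\in I}\End_{\A}(M_i)$ transports abelianness to each factor. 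For $(\Leftarrow)$, the same isomorphism carries abelianness of the factors back to the product, since an idempotent of a finite product of rings is central exactly when each of its components is. Combining this equivalence with part (1)(i) delivers both implications of (ii): from $M$ strongly self-CS-Baer one first extracts the Hom condition, hence each $M_i$ is self-CS-Baer by (i) and each $\End_{\A}(M_i)$ is abelian; conversely the Hom condition together with each $M_i$ self-CS-Baer gives $M$ self-CS-Baer by (i), while abelianness of the factors gives abelianness of $\End_{\A}(M)$.

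Finally, part (2) follows by the duality principle, applying part (1) in the opposite category $\A^{op}$, which is AB3* precisely when $\A$ is AB3: the finite biproduct $M=\bigoplus_{i\in I}M_i$ is self-dual, the dual of a (strongly) self-CS-Baer object is a dual (strongly) self-CS-Baer object, and the condition $\Hom_{\A}(M_i,M_j)=0$ for all $i\neq j$ is self-dual, since it ranges over all ordered pairs. The step I expect to require the most care is the non-circular bookkeeping in (ii): one must first deduce the vanishing of the off-diagonal Hom groups from centrality of the idempotents $e_i$, and only then invoke the ring isomorphism $\End_{\A}(M)\cong\prod_{i\in I}\End_{\A}(M_i)$, whose very existence rests on that vanishing.
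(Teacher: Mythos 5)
Your proposal is correct and follows essentially the same route as the paper: the vanishing of the off-diagonal Hom groups forces every endomorphism of $M$ to decompose diagonally, after which kernels (resp.\ images), their intersections (resp.\ sums), and essentiality in (resp.\ lying above) direct summands are all checked componentwise over the finite decomposition. The only cosmetic differences are that you prove the kernel-side statement (1) directly and obtain (2) by duality while the paper argues on the image side, and that you handle the ``strongly'' versions via the abelian-endomorphism-ring characterization of Corollary \ref{st00} and the isomorphism ${\rm End}_{\mathcal{A}}(M)\cong\prod_{i\in I}{\rm End}_{\mathcal{A}}(M_i)$, whereas the paper works with weak duo objects and fully invariant direct summands $K=\bigoplus_{i\in I}K_i$ directly; both routes rest on the same cited facts.
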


\begin{proof} (2) Assume that $M$ is dual (strongly) self-CS-Baer. Then $M_i$ is dual (strongly) self-CS-Baer for every $i\in I$ by Corollary \ref{c:sdb5}. If $M$ is dual strongly self-CS-Baer, then $M$ is weak duo by Corollary \ref{st00}, hence each $M_i$ is fully invariant, and thus ${\rm Hom}_{\mathcal{A}}(M_i,M_j)=0$ for every $i,j\in I$ with $i\neq j$.

We prove the converse for $I=\{1,2\}$, the general case following inductively. Consider a direct sum decomposition $M=M_1\oplus M_2$ such that $M_1,M_2$ are dual (strongly) self-CS-Baer and $\Hom_{\mathcal{A}}(M_1,M_2)=\Hom_{\mathcal{A}}(M_2,M_1)=0$.
To prove that $M$ is dual (strongly) self-CS-Baer, let $(\varphi_{\lambda})_{\lambda\in\Lambda}$ be a family of endomorphisms of $M$. Note that $M_1$ and $M_2$ are fully invariant in $M$,
since $\Hom_{\mathcal{A}}(M_1, M_2)=\Hom_{\mathcal{A}}(M_2, M_1)=0$. Then, for every $\lambda \in \Lambda$ and each $i=1, 2$, we may consider the endomorphism $\varphi_{i\lambda}=\varphi_{\lambda} k_i: M_i \rightarrow M_i$ (note that ${\rm Im}(\varphi_{\lambda} k_i)\subseteq M_i$), where $k_i:M_i\to M$ is the inclusion morphism. 
Set $N_1=\sum_{\lambda \in \Lambda} {\rm Im}(\varphi_{1\lambda})$ and $N_2=\sum_{\lambda \in \Lambda}{\rm Im}(\varphi_{2\lambda})$.
Then $N=\sum_{\lambda \in \Lambda} {\rm Im}(\varphi_{\lambda})=N_1 \oplus N_2$. By assumption, there exist a (fully invariant) direct summand $K_1$ of $M_1$
and a (fully invariant) direct summand $K_2$ of $M_2$ such that $N_i/K_i$ is superfluous in $M_i/K_i$ for each $i=1, 2$. Set $K=K_1 \oplus K_2$, and let $X$ be a subobject of $M$ such that $K \subseteq X$ and $(N/K) + (X/K) = M/K$. Since $N_1/K_1$ is superfluous in $M_1/K_1$, we obtain $N_2+X=M$.
Therefore $X=M$, as $N_2/K_2$ is superfluous in $M_2/K_2$. Note that $K$ is a (fully invariant) direct summand of $M$. It follows that $M$ is a dual (strongly) self-CS-Baer object.
\end{proof}

In case of module categories, we may add a condition that allows us to deal with (possibly) infinite direct sum decompositions as follows.

\begin{theo} \label{t:pstr4} Let $R$ be a unitary ring, and let $M=\bigoplus_{i\in I}M_i$ be a direct sum decomposition of a right $R$-module $M$ into submodules $M_i$ such that for every submodule $L$ of $M$, $L=\bigoplus_{i\in I}(L\cap M_i)$. Then:
\begin{enumerate}
\item $M$ is (strongly) self-CS-Baer if and only if $M_i$ is (strongly) self-CS-Baer for each $i\in I$.
\item $M$ is  dual (strongly) self-CS-Baer if and only if $M_i$ is dual (strongly) self-CS-Baer for each $i\in I$.
\end{enumerate}
\end{theo}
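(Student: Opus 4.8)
The plan is to prove each direction of (1) and (2) separately, with the forward implications being immediate and the converses carrying the real content. For the forward implications, each $M_i$ is a direct summand of $M$, so if $M$ is (strongly) self-CS-Baer (resp.\ dual (strongly) self-CS-Baer), then so is each $M_i$ by Corollary \ref{c:sdb5}.

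For the converses, the first observation I would record is that the hypothesis $L=\bigoplus_{i\in I}(L\cap M_i)$ for every submodule $L$ forces $\Hom_R(M_i,M_j)=0$ whenever $i\neq j$. Indeed, given a nonzero $g:M_i\to M_j$, the graph submodule $L=\{x+g(x)\mid x\in M_i\}$ satisfies $L\cap M_i={\rm Ker}(g)$, $L\cap M_j=0$, and $L\cap M_k=0$ for $k\neq i,j$, so $\bigoplus_k(L\cap M_k)={\rm Ker}(g)\subsetneq L$, contradicting the hypothesis. Consequently every $M_i$ is fully invariant in $M$, so each endomorphism $\varphi$ of $M$ restricts to an endomorphism $\varphi_i=\varphi|_{M_i}:M_i\to M_i$ and decomposes componentwise as $\varphi=\bigoplus_i\varphi_i$.

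Now fix a family $(\varphi_\lambda)_{\lambda\in\Lambda}$ of endomorphisms of $M$. Using uniqueness of the direct-sum expansion together with the decomposition hypothesis, I would check that kernels and images distribute over components and that intersections and sums commute with the direct sum: for (1) one gets ${\rm Ker}(\varphi_\lambda)=\bigoplus_i{\rm Ker}(\varphi_{\lambda,i})$ and hence $\bigcap_\lambda{\rm Ker}(\varphi_\lambda)=\bigoplus_i K_i$ with $K_i=\bigcap_\lambda{\rm Ker}(\varphi_{\lambda,i})$, and dually for (2) one gets $\sum_\lambda{\rm Im}(\varphi_\lambda)=\bigoplus_i N_i$ with $N_i=\sum_\lambda{\rm Im}(\varphi_{\lambda,i})$. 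Applying the (strongly) self-CS-Baer property of each $M_i$ yields, for (1), a (fully invariant) direct summand $D_i$ of $M_i$ in which $K_i$ is essential, and, for (2), a (fully invariant) direct summand $C_i\subseteq N_i$ of $M_i$ with $N_i/C_i$ superfluous in $M_i/C_i$. The candidate witnesses are $D=\bigoplus_i D_i$ and $C=\bigoplus_i C_i$, each a direct summand of $M$; in the strong case $D$ (resp.\ $C$) is fully invariant because $\varphi(D)=\bigoplus_i\varphi_i(D_i)\subseteq\bigoplus_i D_i=D$, again using $\Hom_R(M_i,M_j)=0$.

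It remains to verify the essentiality, respectively superfluousness, of the assembled witness, which I expect to be the main obstacle. For (1) essentiality survives the possibly infinite direct sum by a finite-support reduction: any nonzero element of $D$ lies in some finite subsum $D_{i_1}\oplus\cdots\oplus D_{i_n}$, in which $K_{i_1}\oplus\cdots\oplus K_{i_n}$ is essential by the classical finite case, so $\bigoplus_i K_i$ is essential in $D$. The dual statement for (2) is the delicate point, since superfluousness is \emph{not} preserved by arbitrary direct sums in general; this is precisely where the decomposition hypothesis is indispensable. I would first show that $M/C=\bigoplus_i(M_i/C_i)$ again enjoys the property that every submodule $\bar X$ satisfies $\bar X=\bigoplus_i\big(\bar X\cap(M_i/C_i)\big)$, by pulling $\bar X$ back to a submodule $X\supseteq C$, applying $X=\bigoplus_i(X\cap M_i)$, and pushing forward. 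Then for any $\bar X\subseteq M/C$ with $(N/C)+\bar X=M/C$, projecting onto the $i$-th component gives $(N_i/C_i)+\big(\bar X\cap(M_i/C_i)\big)=M_i/C_i$, whence $\bar X\cap(M_i/C_i)=M_i/C_i$ for every $i$ by superfluousness, and therefore $\bar X=M/C$. This shows $N/C$ is superfluous in $M/C$, i.e.\ $N$ lies above the (fully invariant) direct summand $C$, completing the converse.
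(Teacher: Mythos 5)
Your proposal is correct and follows essentially the same route as the paper: derive $\Hom_R(M_i,M_j)=0$ from the decomposition hypothesis, decompose endomorphisms, kernels/images and intersections/sums componentwise, assemble the witness summand from the componentwise witnesses, and verify essentiality (resp.\ superfluousness) using the fact that every submodule of $M$ (and of the relevant quotient) splits along the $M_i$. The only cosmetic differences are that you prove the $\Hom$-vanishing directly via a graph-submodule argument where the paper cites \cite[Lemma~2.4]{OHS}, and that you write out part (1) explicitly and pass to the quotient $M/C$ for the superfluousness check, while the paper only records the argument for part (2) and works with submodules $X$ containing $K$ directly.
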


\begin{proof} Let us first note that the condition that $L=\bigoplus_{i\in I}(L\cap M_i)$ for every submodule $L$ of $M$ implies that $\Hom_R(M_i,M_j)=0$ for every $i,j\in I$ with $i\neq j$ \cite[Lemma~2.4]{OHS}.

(2) We only need to prove the sufficiency, since the necessity follows by Corollary \ref{c:sdb5}. 
Assume that $M_i$ is dual (strongly) self-CS-Baer for each $i\in I$. Let $(f_j)_{j\in J}$ be a family of endomorphisms $f_j:M\to M$. For every $i\in I$, denote by $k_i:M_i\to M$ the inclusion morphism. Then ${\rm Im}(f_j)=\sum_{i\in I}{\rm Im}(f_jk_i)$ for each $j\in J$. Since ${\rm Im}(f_jk_i)\subseteq M_i$ for each $i\in I$ and $j\in J$, we have that ${\rm Im}(f_j)=\bigoplus_{i\in I} {\rm Im}(f_jk_i)$ for each $j\in J$. For each $i\in I$, consider the family $(f_jk_i)_{j\in J}$ of endomorphisms of $M_i$, and by hypothesis deduce the existence of a (fully invariant) direct summand $K_i$ of $M_i$ such that $N_i=\sum_{j\in J}{\rm Im}(f_jk_i)$ lies above $K_i$. Consider the direct summand $K=\bigoplus_{i\in I}K_i$ of $M$. If each $K_i$ is fully invariant in $M_i$, then $K$ is fully invariant in $M$ by \cite[Proposition~2.10]{CKT}. Denote $N=\sum_{j\in J}{\rm Im}(f_j)$. Then we have: \[N=\sum_{j\in J}\left(\bigoplus_{i\in I} {\rm Im}(f_jk_i)\right)=\bigoplus_{i\in I} \left(\sum_{j\in J}{\rm Im}(f_jk_i)\right)=\bigoplus_{i\in I} N_i.\] 
We claim that $N$ lies above $K$. To this end, let $X$ be a subobject of $M$ with $K\subseteq X$ such that $N/K+X/K=M/K$. Then we have:
\[M=\bigoplus_{i\in I} \left( N_i+(X\cap M_i) \right).\] Now fix $i\in I$. Then $M_i=N_i+(X\cap M_i)$. Since 
$N_i/K_i$ is superfluous in $M_i/K_i$, it follows that $M_i=K_i+(X\cap M_i)$. But $K_i\subseteq K\subseteq X$ and $K_i\subseteq M_i$, hence $K_i\subseteq X\cap M_i$. Thus $M_i=X\cap M_i$, whence we deduce that $M=\bigoplus_{i\in I}M_i=\bigoplus_{i\in I}(X\cap M_i)=X$. This shows that $N$ lies above $K$, and consequently, $M$ is dual (strongly) self-CS-Baer.
\end{proof}

 \section{Dual self-CS-Baer modules over Dedekind domains}

The aim of this section is to determine the structure of dual (strongly) self-CS-Baer modules over Dedekind domains.
The next theorem shows that we can reduce the problem to the case of modules over discrete valuation rings.
First, we present the following example.

\begin{ex} \rm \label{injective} Let $R$ be a Dedekind domain. It is well known that the class of injective $R$-modules is closed under
factor modules and direct sums. Let $M$ be an injective $R$-module. Then for every family $(f_{\lambda})_{\lambda\in \Lambda}$ of endomorphisms of $M$,
$\sum_{\lambda \in \Lambda} \Im (f_{\lambda})$ is a direct summand of $M$. Therefore, $M$ is a dual self-CS-Baer $R$-module.
\end{ex}

Let $M$ be a module over a Dedekind domain $R$. We denote by $T(M)$ the torsion submodule of $M$, i.e., $T(M)=\{x \in M \mid {\rm Ann}_R(x) \neq 0\}$.
Let $\mathbf{P}$ denote the set of non-zero prime ideals of $R$. For any $\mathfrak{p} \in \mathbf{P}$, the $\mathfrak{p}$-primary component of $M$
will be denoted by $T_{\mathfrak{p}}(M)$, that is, $T_{\mathfrak{p}}(M) = \{x \in M \mid \mathfrak{p}^nx=0$ for some integer $n \geq 0\}$.

\begin{rem} \label{sum-summands} \rm (i) Let $M$ be a dual self-CS-Baer module. Note that every direct summand of $M$ is a homomorphic image of $M$. Then for every family of direct summands $(M_{\lambda})_{\lambda\in\Lambda}$ of $M$, $\sum_{\lambda \in \Lambda} M_{\lambda}$ lies above a direct summand of $M$, hence $M$ is LSSSP (also, see Corollary \ref{c:EL}). 

(ii) Let $\mathfrak{p} \in \mathbf{P}$ and let $M$ be a $\mathfrak{p}$-primary $R$-module (i.e., $T_{\mathfrak{p}}(M)=M$).
Then $M$ is clearly a module over the localization $R_{\mathfrak{p}}$. Moreover, $M$ is a dual self-CS-Baer $R$-module if and only if $M$ is a dual self-CS-Baer $R_{\mathfrak{p}}$-module.
\end{rem}

\begin{theo} \label{structure-Dedekind} Let $R$ be a non-local Dedekind domain with quotient field $K$, and let $M$ be an $R$-module. Then the following are equivalent:
\begin{enumerate}
\item[{\em (i)}] $M$ is a dual self-CS-Baer module;
\item[{\em (ii)}]  $M=T(M) \oplus L$ such that $T(M)$ is a dual self-CS-Baer module and $L \cong K^{(I)}$ for some index set $I$;
\item[{\em (iii)}]  $M=\left(\bigoplus_{\mathfrak{p} \in \mathbf{P}} T_{\mathfrak{p}}(M)\right) \oplus L$ such that each $T_p(M)$ $(\mathfrak{p} \in \mathbf{P})$
is a dual self-CS-Baer $R_{\mathfrak{p}}$-module and $L \cong K^{(I)}$ for some index set $I$.
\end{enumerate}
\end{theo}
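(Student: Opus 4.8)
The plan is to prove the cycle by first settling the purely structural equivalence (ii)$\Leftrightarrow$(iii) and then the substantive equivalence (i)$\Leftrightarrow$(ii). For (ii)$\Leftrightarrow$(iii) I would invoke the primary decomposition of torsion modules over a Dedekind domain, $T(M)=\bigoplus_{\mathfrak p\in\mathbf P}T_{\mathfrak p}(M)$, together with the fact that any submodule $L\subseteq T(M)$ splits as $L=\bigoplus_{\mathfrak p}(L\cap T_{\mathfrak p}(M))$ along this decomposition. That splitting is precisely the hypothesis required by Theorem~\ref{t:pstr4}, which then gives that $T(M)$ is dual self-CS-Baer if and only if each $T_{\mathfrak p}(M)$ is dual self-CS-Baer as an $R$-module; Remark~\ref{sum-summands}(ii) upgrades this to the $R_{\mathfrak p}$-module formulation. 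Since the torsion-free divisible complement $L\cong K^{(I)}$ is the same in (ii) and (iii), the equivalence follows.

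For (i)$\Rightarrow$(ii) the strategy is to peel off the maximal divisible submodule. Over a Dedekind domain divisible modules are injective, so writing $D$ for the maximal divisible submodule of $M$ we obtain $M=D\oplus C$ with $C$ reduced, and $D=D_f\oplus D_t$ with $D_f\cong K^{(I)}$ torsion-free and $D_t=T(D)$; both $C$ and $D$ are dual self-CS-Baer by Corollary~\ref{c:sdb5}. The whole reduction then hinges on a single claim: the reduced summand $C$ is torsion. Granting it, $T(M)=D_t\oplus C$ and $M=D_f\oplus T(M)$ with $D_f\cong K^{(I)}$, and $T(M)$ is dual self-CS-Baer as a direct summand, which is exactly (ii).

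The main obstacle is precisely this claim that a reduced dual self-CS-Baer module over a \emph{non-local} Dedekind domain is torsion, and non-locality is essential here: by Proposition~\ref{p:semiperfect} the ring $R$ is dual self-CS-Baer only when it is semiperfect, and a non-local Dedekind domain is never semiperfect, so no copy of $R$ (nor of a nonzero ideal) can sit as a direct summand of a dual self-CS-Baer module. To exploit this I would study the multiplication endomorphisms $\mu_r\colon C\to C$: since $C$ is dual self-CS-Rickart each image $rC$ lies above a direct summand, and passing to the torsion-free quotient $\bar C=C/T(C)$ shows $r\bar C$ lies above a direct summand of $\bar C$ for every $0\neq r\in R$. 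Combining this with the fact that a superfluous submodule lies in the radical, applied at two distinct maximal ideals, should force $\bar C$ to be divisible; the delicate final step is then to rule out a \emph{reduced} module with nonzero divisible torsion-free quotient — such reduced modules do exist in general (e.g.\ $\prod_n R/\mathfrak m^n$-type modules), so here I would bring in the LSSSP property supplied by Remark~\ref{sum-summands}(i) and Corollary~\ref{c:EL}. I expect this divisibility-versus-reducedness dichotomy to be the technically heaviest point.

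Finally, for (ii)$\Rightarrow$(i) I would argue directly. Since $L\cong K^{(I)}$ is torsion-free and $T(M)$ torsion, $\Hom_R(T(M),L)=0$, so every endomorphism of $M=T(M)\oplus L$ is block upper-triangular. Given a family $(f_\lambda)$, the $L$-component of $N=\sum_\lambda\Im(f_\lambda)$ is $\sum_\lambda\Im(d_\lambda)$, a divisible submodule of the injective $L$, hence a direct summand of $L$; meanwhile the projection of $N$ onto $T(M)$ is governed by the dual self-CS-Baer property of $T(M)$, using that the contributions coming from $L$ are divisible and therefore land in the injective (summand) part of $T(M)$. Assembling the two "lies above a summand" conclusions, one on each factor, into a single decomposition of $M$ shows that $N$ lies above a direct summand of $M$, so $M$ is dual self-CS-Baer. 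The only care required is in combining the two factorwise decompositions, and the injectivity of $L$ is what keeps this step manageable.
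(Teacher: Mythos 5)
Your treatment of (ii)$\Leftrightarrow$(iii) coincides with the paper's (primary decomposition plus Theorem~\ref{t:pstr4} and Remark~\ref{sum-summands}(ii)), and your (ii)$\Rightarrow$(i) is in the same spirit as the paper's, though you leave the genuinely delicate step unexecuted: the images $f_\lambda(L)$ are \emph{not} contained in $L$, so $N=\sum_\lambda\Im(f_\lambda)$ is not the direct sum of its projections onto $L$ and $T(M)$, and one cannot simply ``assemble two factorwise decompositions.'' The paper handles this by setting $D=\sum_\lambda f_\lambda(L)$, noting $D$ is injective hence splits off as $M=D\oplus B$, using modularity to write $N=D\oplus(N\cap B)$ with $N\cap B\cong A/(D\cap A)\subseteq T(M)$, and then realizing $N\cap B$ as $\sum_\lambda\Im(g_\lambda)$ for explicitly constructed endomorphisms $g_\lambda=\alpha f_\lambda|_{T(M)}$ of $T(M)$. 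That last transfer is what lets the hypothesis on $T(M)$ be applied; your sketch does not supply it.

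The genuine gap is in (i)$\Rightarrow$(ii). Your route (split off the maximal divisible submodule $D$, reduce to showing a reduced dual self-CS-Baer module over a non-local Dedekind domain is torsion) rests entirely on a claim you do not prove, and the method you propose for it is flawed: from ``$rC$ lies above a direct summand of $C$'' you cannot conclude ``$r\bar C$ lies above a direct summand of $\bar C=C/T(C)$,'' because $T(C)$ need not be a direct summand of $C$ and the ``lies above'' property only transfers along retractions (Corollary~\ref{t:sdr1}), not along arbitrary quotients. Excluding $R$ itself as a direct summand (via Proposition~\ref{p:semiperfect} and non-semiperfectness of a non-local Dedekind domain) is correct but nowhere near sufficient, since a reduced torsion-free module need not contain a free direct summand. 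The paper avoids all of this and argues quite differently: since $M$ is dual self-CS-Rickart, each primary component $T_{\mathfrak p}(M)$ is already a direct summand of $M$ by an external result (\cite[Theorem~4.11]{Tribak}); the LSSSP property of $M$ (Remark~\ref{sum-summands}(i)) then makes $T(M)=\sum_{\mathfrak p}T_{\mathfrak p}(M)$ lie above a direct summand, so $T(M)=A\oplus B$ with $B\ll M$; a componentwise argument with \cite[Lemma~4.2(2)]{MM} forces $B=0$, so $T(M)$ splits off; and \cite[Proposition~4.13]{Tribak} identifies the complement as $K^{(I)}$. Without these inputs (or full proofs replacing them), your (i)$\Rightarrow$(ii) does not go through.
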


\begin{proof} (i) $\Rightarrow$ (ii) It is well known that $T(M)=\bigoplus_{\mathfrak{p} \in \mathbf{P}} T_{\mathfrak{p}}(M)$. By \cite[Theorem 4.11]{Tribak}, each $T_{\mathfrak{p}}(M)$ is a direct summand of $M$. Hence $T(M)$ lies above a direct summand of $M$ by Remark \ref{sum-summands}(i).
It follows that $T(M)=A \oplus B$ such that $A$ is a direct summand of $M$
and $B \ll M$ (i.e., $B$ is superfluous in $M$). Note that $A=T(A)=\bigoplus_{\mathfrak{p} \in \mathbf{P}} T_{\mathfrak{p}}(A)$ and $B=T(B)=\bigoplus_{\mathfrak{p} \in \mathbf{P}} T_{\mathfrak{p}}(B)$. Then $T(M)=\bigoplus_{\mathfrak{p} \in \mathbf{P}} (T_{\mathfrak{p}}(A) \oplus T_{\mathfrak{p}}(B))$, and so $T_{\mathfrak{p}}(M)=T_{\mathfrak{p}}(A) \oplus T_{\mathfrak{p}}(B)$ for all $\mathfrak{p} \in \mathbf{P}$. Since each $T_{\mathfrak{p}}(B)$ is superfluous in $M$, we have $T_{\mathfrak{p}}(B) \ll T_{\mathfrak{p}}(M)$ for all $\mathfrak{p} \in \mathbf{P}$ by \cite[Lemma 4.2(2)]{MM}. Therefore $T_{\mathfrak{p}}(B)=0$ for all $\mathfrak{p} \in \mathbf{P}$. This implies that $B=0$, and hence $T(M)=A$ is a direct summand of $M$. Let $L$ be a submodule of $M$ such that $M=T(M) \oplus L$. By \cite[Proposition 4.13]{Tribak}, $L \cong K^{(I)}$ for some index set $I$.

(ii) $\Rightarrow$ (i) Let $(f_{\lambda})_{\lambda\in\Lambda}$ be a family of endomorphisms of $M$. Set $N=\sum_{\lambda \in \Lambda} \Im (f_{\lambda})$.
Then $N=D + A$, where $D=\sum_{\lambda \in \Lambda} f_{\lambda}(L)$ and $A=\sum_{\lambda \in \Lambda} f_{\lambda}(T(M))$.
Since $R$ is a Dedekind domain, $D$ is injective. Hence $D$ is a direct summand of $M$.
Thus $M=D \oplus B$ for some submodule $B$ of $M$. By modularity, we obtain $N=D \oplus (N \cap B)$.
Note that $N \cap B \cong N/D \cong A/(D \cap A)$. Therefore $N \cap B \subseteq T(M)$ as $A \subseteq T(M)$.
Let $\theta:A/(D \cap A) \rightarrow N \cap B$ be an isomorphism, and let $\pi:A \rightarrow A/(D \cap A)$ be the projection map.
Set $\alpha=\mu\theta\pi:A \rightarrow T(M)$, where $\mu:N \cap B \rightarrow T(M)$ is the inclusion map.
For each $\lambda \in \Lambda$, let $g_{\lambda}:T(M) \rightarrow T(M)$ be the endomorphism of $T(M)$ defined by $g_{\lambda}(x)=\alpha f_{\lambda}(x)$ for all $x \in T(M)$. It follows that $\sum_{\lambda \in \Lambda} \Im g_{\lambda} = \alpha(A) = N \cap B$. Since $T(M)$ is dual self-CS-Baer,
$N \cap B=U \oplus V$ such that $U$ is a direct summand of $T(M)$ and $V \ll T(M)$. This implies that $N=D \oplus U \oplus V$ and $V \ll M$.
Moreover, it is clear that $D \oplus U$ is a direct summand of $M$, since $U$ is a direct summand of $B$. Consequently, $M$ is dual self-CS-Baer.

(ii) $\Leftrightarrow$ (iii) This follows from Theorem \ref{t:pstr4} and Remark \ref{sum-summands}(ii), noting that for every submodule $N$ of $T(M)$, $N=\bigoplus_{\mathfrak{p} \in \mathbf{P}} T_{\mathfrak{p}}(N)=\bigoplus_{\mathfrak{p} \in \mathbf{P}} (N\cap T_{\mathfrak{p}}(M))$.
\end{proof}

Now our purpose is to describe the structure of dual self-CS-Baer modules over discrete valuation rings. In the remainder of this section we assume that $R$ is a discrete valuation ring with maximal ideal $\mathfrak{m}$, quotient field $K$ and $Q=K/R$. We begin with a characterization of dual self-CS-Baer modules having a direct summand isomorphic to $R$.

\begin{prop} \label{lifting} Let $R$ be a ring which is not necessarily a Dedekind domain, and let $M$ be an $R$-module.
Then the following statements are equivalent:
\begin{enumerate}
\item[{\em (i)}] $M \oplus R$ is dual self-CS-Baer;

\item[{\em (ii)}] $M \oplus R$ is a lifting module.
\end{enumerate}
\end{prop}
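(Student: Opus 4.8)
The plan is to establish the two implications separately, with essentially all of the content sitting in (i)$\Rightarrow$(ii); the reverse implication is immediate from the definitions.

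For (ii)$\Rightarrow$(i), I would simply observe that this direction is definitional. If $M\oplus R$ is lifting, then \emph{every} submodule of $M\oplus R$ lies above a direct summand, so in particular $\sum_{i\in I}{\rm Im}(f_i)$ does, for any family $(f_i)_{i\in I}$ of endomorphisms of $M\oplus R$. Hence $M\oplus R$ is dual self-CS-Baer, and no separate work is required.

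For the converse (i)$\Rightarrow$(ii), the guiding idea is to exploit the free cyclic summand $R$ in order to exhibit an \emph{arbitrary} submodule of $P:=M\oplus R$ as a sum of images of endomorphisms, so that the dual self-CS-Baer hypothesis can actually be brought to bear on it. Concretely, let $\pi:P\to R$ be the canonical projection onto the second summand, and for each $x\in P$ let $\iota_x:R\to P$ be the homomorphism determined by $\iota_x(1)=x$, that is, $\iota_x(s)=xs$. Setting $f_x=\iota_x\pi\in{\rm End}_R(P)$, a one-line computation gives $f_x(m',r')=xr'$, whence ${\rm Im}(f_x)=xR$. This is exactly the device already used in the proof of Proposition \ref{p:semiperfect} and for cyclic modules, now routed through the projection onto the $R$-part so that it yields honest endomorphisms of all of $P$.

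It then remains to assemble the pieces: for any submodule $N$ of $P$ one has $N=\sum_{x\in N}xR=\sum_{x\in N}{\rm Im}(f_x)$, a sum of images of endomorphisms of $P$, so since $P$ is dual self-CS-Baer, $N$ lies above a direct summand of $P$. As $N$ was arbitrary, $P=M\oplus R$ is lifting. The only verifications are ${\rm Im}(f_x)=xR$ and $N=\sum_{x\in N}xR$, both of which are routine. I do not anticipate a genuine obstacle here: the crux is solely the observation that a free cyclic direct summand forces every submodule to be a sum of endomorphism-images, and this is precisely what closes the \emph{a priori} gap between the (ostensibly weaker) dual self-CS-Baer condition, which constrains only sums of images, and the lifting condition, which constrains all submodules.
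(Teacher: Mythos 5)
Your proposal is correct and follows essentially the same route as the paper: the paper also defines, for each $a\in M\oplus R$, the endomorphism $\varphi_a((x,r))=ar$ (which is exactly your $\iota_a\pi$) and writes an arbitrary submodule $L$ as $\sum_{a\in L}{\rm Im}(\varphi_a)$ before invoking the dual self-CS-Baer hypothesis. No gaps; the reverse implication is indeed immediate from the definitions.
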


\begin{proof} (i) $\Rightarrow$ (ii) Let $a \in M \oplus R$ and consider the $R$-homomorphism $\varphi_a: M \oplus R \rightarrow M \oplus R$
defined by $\varphi_a((x, r))=ar$ for all $x \in M$ and all $r \in R$. Let $L$ be a submodule of $M \oplus R$.
It is easily seen that $L=\sum_{a \in L}\varphi_a(M \oplus R)$. Since $M \oplus R$ is dual self-CS-Baer, it follows that $L$ lies above a direct summand. Therefore $M \oplus R$ is a lifting module.

(ii) $\Rightarrow$ (i) This is evident.
\end{proof}

An internal direct sum $\bigoplus_{i \in I} A_i$ of submodules of a module $M$ is called a {\it local direct summand of $M$} if $\bigoplus_{i \in F} A_i$ is a direct summand of $M$ for any finite subset $F \subseteq I$. If, moreover, $\bigoplus_{i \in I} A_i$ is a direct summand of $M$, then we say that
the local direct summand $\bigoplus_{i \in I} A_i$ is also a direct summand of $M$.
Recall that a family of modules $(M_{\alpha})_{\alpha \in \Lambda}$ is called {\it locally-semi-T-nilpotent} if for any subfamily $(M_{\alpha_i})_{i \in \mathbb{N}}$ with distinct $\alpha_i$ and any family of non-isomorphisms $(f_i: M_{\alpha_i} \rightarrow M_{\alpha_{i+1}})_{i \in \mathbb{N}}$,
and for every $x \in M_{\alpha_1}$, there exists $n \in \mathbb{N}$ (depending on $x$) such that $f_n \ldots f_2f_1(x)=0$.
For natural numbers $n_1$, $n_2$, $\ldots$, $n_s$, let $B(n_1, n_2, \ldots, n_s)$ denote the direct sum of arbitrarily many copies of $R/\mathfrak{m}^{n_1}$, $R/\mathfrak{m}^{n_2}$, $\ldots$, $R/\mathfrak{m}^{n_s}$.

The proof of the next lemma follows similar arguments as in \cite[Theorem 3.13]{Tribak}, but we include it for the convenience of the reader.

\begin{lemm} \label{main-lemma} Let $M=D_1 \oplus D_2 \oplus L$ such that $D_1 \cong K^{(I_1)}$, $D_2 \cong Q^{(I_2)}$ and $L \cong B(n)$
for some index sets $I_1$ and $I_2$ and some positive integer $n$. Let $N$ be a submodule of $M$ having the form $N=D+X$, where $D$ is an injective
submodule of $M$ and $X$ is a submodule of $M$ with $X\mathfrak{m}^n=0$. Then $N$ lies above a direct summand of $M$.
\end{lemm}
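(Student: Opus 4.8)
The plan is to strip off the two ``easy'' pieces of $M$ (its injective part and its torsion-free part), reduce to a bounded submodule of $Q^{(I_2)}\oplus L$, and there exhibit the direct summand that $N$ lies above; the boundedness hypothesis $\mathfrak{m}^nX=0$ will turn out to be what makes the final realisation step work.

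First I would dispose of the injective summand $D$. Over a discrete valuation ring injective $=$ divisible, so $D$ is contained in the divisible part $D_1\oplus D_2$ of $M$ (note $L\cong B(n)$ is reduced), and being injective it splits off: write $D_1\oplus D_2=D\oplus E$ with $E$ injective, so that $M=D\oplus C$ with $C=E\oplus L$. Since $D\subseteq N$, modularity gives $N=D\oplus Y$ with $Y=N\cap C\cong N/D$, and $N/D$ is a homomorphic image of $X$, hence $\mathfrak{m}^nY=0$. If I can produce a direct summand $K_0$ of $C$ with $K_0\subseteq Y$ and $Y/K_0\ll C/K_0$, then $K=D\oplus K_0$ is a direct summand of $M$ lying inside $N$ with $N/K\cong Y/K_0\ll C/K_0\cong M/K$, which is exactly the assertion. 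Writing $E=K^{(J_1)}\oplus Q^{(J_2)}$, the image of the bounded module $Y$ in the torsion-free summand $K^{(J_1)}$ must vanish, so $Y\subseteq W:=Q^{(J_2)}\oplus L$; as $W$ is a direct summand of $C$, it suffices to find such a $K_0$ inside $W$.

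Two superfluousness facts then drive everything. First, every bounded submodule of the divisible module $Q^{(J_2)}$ is superfluous in it, because a bounded divisible quotient is zero. Second, $L\cong B(n)$ is a \emph{lifting} module: it is a direct sum of copies of the uniserial (hence hollow) module $R/\mathfrak{m}^n$, and the family is locally semi-$T$-nilpotent, since any composite of $n$ non-isomorphisms $R/\mathfrak{m}^n\to R/\mathfrak{m}^n$ is multiplication by an element of $\mathfrak{m}^n$, hence zero. Letting $\rho\colon W\to L$ be the projection with kernel $Q^{(J_2)}$, the lifting property applied to $\rho(Y)$ yields a direct summand $\bar{K}$ of $L$ with $\bar{K}\subseteq\rho(Y)$ and $\rho(Y)/\bar{K}\ll L/\bar{K}$; and since a summand of the homogeneous module $(R/\mathfrak{m}^n)^{(I)}$ is again a direct sum of copies of $R/\mathfrak{m}^n$, the module $\bar{K}$ is homogeneous of exponent exactly $n$.

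The step I expect to be the main obstacle is lifting $\bar{K}$ back into $Y$ as a direct summand of $W$: a naive choice of preimages of generators raises their orders and leaks into the divisible part, destroying both injectivity of $\rho$ on the lift and purity. This is precisely where $\mathfrak{m}^nX=0$, with the \emph{same} $n$ as the exponent of $L$, becomes decisive. Decomposing $\bar{K}=\bigoplus_\gamma\langle\bar{k}_\gamma\rangle$ with each $\bar{k}_\gamma$ of order exactly $n$ and choosing $y_\gamma\in Y$ with $\rho(y_\gamma)=\bar{k}_\gamma$, the bound $\mathfrak{m}^nY=0$ forces the order of $y_\gamma$ to be $\le n$, while $\rho(y_\gamma)=\bar{k}_\gamma$ forces it to be $\ge n$; so every $y_\gamma$ has order exactly $n$, and $\rho$ restricts to an isomorphism of $K_0:=\bigoplus_\gamma\langle y_\gamma\rangle\subseteq Y$ onto $\bar{K}$. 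Writing $L=\bar{K}\oplus\bar{K}'$, one then checks directly that $W=K_0\oplus(Q^{(J_2)}\oplus\bar{K}')$, so $K_0$ is a direct summand of $W$ contained in $Y$. Finally, with $\sigma\colon W\to V:=Q^{(J_2)}\oplus\bar{K}'$ the projection along $K_0$, I would identify $Y/K_0\cong\sigma(Y)$ and observe that its $Q^{(J_2)}$-component is bounded, hence superfluous by the first fact, while its $\bar{K}'$-component is a homomorphic image of $\rho(Y)/\bar{K}$, hence superfluous in $\bar{K}'\cong L/\bar{K}$ by the second; as $\sigma(Y)$ lies in the sum of these two superfluous submodules of $V$, it is superfluous, giving $Y/K_0\ll W/K_0$. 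Taking $K=D\oplus K_0$ then completes the proof.
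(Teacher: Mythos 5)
Your argument is correct, but it follows a genuinely different route from the paper's. The paper works directly with $Y=N\cap C$: it decomposes the bounded module $Y$ into cyclic summands (via Sharpe), observes that each cyclic piece is either superfluous in $M$ or a direct summand isomorphic to $R/\mathfrak{m}^n$ (Tribak's Lemma~3.9 --- this is where $X\mathfrak{m}^n=0$ enters there), and then runs a Zorn's-lemma argument on local direct summands, using local semi-$T$-nilpotence to conclude that a maximal local direct summand is an actual direct summand and absorbing the leftover cyclics into the superfluous part. You instead strip off both the injective and torsion-free parts first, push the entire combinatorial core into the single statement ``$B(n)$ is lifting,'' apply it to $\rho(Y)$, and then lift the resulting summand $\bar{K}$ back into $Y$ by the exponent count showing $\rho$ restricts to an isomorphism $K_0\to\bar{K}$ --- a clean step that isolates exactly where the hypothesis $X\mathfrak{m}^n=0$ (with the \emph{same} $n$ as the exponent of $L$) is used. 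I checked the details (directness of $\sum_\gamma y_\gamma R$, the decomposition $W=K_0\oplus(Q^{(J_2)}\oplus\bar{K}')$, and the identity $\pi_{\bar{K}'}\sigma=\pi'\rho$ on $W$ that makes the final superfluousness argument work) and they all go through. Your route is more modular; the paper's is more self-contained, since the lifting property of $B(n)$ encapsulates essentially the same local-direct-summand machinery the paper carries out by hand.

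One caveat: your stated justification that $B(n)$ is lifting --- ``a direct sum of hollow modules forming a locally semi-$T$-nilpotent family'' --- is not a valid implication in general; for instance $\mathbb{Z}/p\mathbb{Z}\oplus\mathbb{Z}/p^3\mathbb{Z}$ is a finite (hence trivially locally semi-$T$-nilpotent) direct sum of hollow modules that is not lifting. For the homogeneous module $(R/\mathfrak{m}^n)^{(I)}$ the conclusion is nevertheless true (one also needs the pairwise relative projectivity of the summands, automatic here since $R/\mathfrak{m}^n$ is quasi-projective), and it can simply be cited from \cite[Proposition A.7]{MM}, which the paper itself invokes for the lifting property of $B(n,n+1)$. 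The same remark applies to your claim that every direct summand of $(R/\mathfrak{m}^n)^{(I)}$ is again homogeneous of exponent $n$: true, but it rests on Azumaya's theorem rather than being immediate. With those two facts properly referenced, your proof is complete.
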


\begin{proof} Since $D$ is a direct summand of $N$, there exists a submodule $Y$ of $N$ such that $N=D \oplus Y$.
Note that $Y \cong X/(X \cap D)$. Thus $Y\mathfrak{m}^n=0$. Using \cite[Theorem 6.14]{Sharpe}, $Y=\bigoplus_{i \in I} Y_i$ is a direct sum of cyclic submodules
$Y_i=y_iR$ $(i \in I)$. By \cite[Lemma 3.9]{Tribak}, we infer that each $Y_i$ is either superfluous in $M$ or a direct summand of $M$.
Let $J=\{i \in I \mid Y_i \ll M\}$, and put $U=\bigoplus_{j \in J}Y_j$. Then $U \ll M$ by \cite[Lemma 3.12]{Tribak}.
If $J=I$, then $Y=U \ll M$ and we are done. Now assume that $I \setminus J \neq \emptyset$ and set $V=\bigoplus_{i \in I \setminus J}Y_j$.
Hence $Y=U \oplus V$. Note that $M$ is a direct sum of copies of $K$, $Q$ and $R/{\mathfrak{m}}^n$. Then this decomposition complements maximal direct summands
by \cite[Theorem 12.6]{AF}. This implies that $Y_i \cong R/{\mathfrak{m}}^n$ for every $i \in I \setminus J$.
By Zorn's Lemma, there exists a maximal subset $\Lambda \subseteq I \setminus J$ such that $W=\bigoplus_{\lambda \in \Lambda} Y_{\lambda}$
is a local direct summand of $M$. In addition, $(Y_{\lambda})_{\lambda \in \Lambda}$ is a locally-semi-T-nilpotent family of indecomposable
direct summands by \cite[Lemma 3.11]{Tribak}. Thus $W$ must be a direct summand of $M$ by \cite[Theorem 3.2]{Dung}.
Therefore  $M=W \oplus E$ for some submodule $E$ of $M$. Let $\Omega=(I \setminus J)\setminus \Lambda$.
Then $V=W \oplus \left(\bigoplus_{\alpha \in \Omega}y_{\alpha}R\right)$. Fix $\alpha \in \Omega$. Then there exist $w_{\alpha} \in W$ and $e_{\alpha} \in E$
such that $y_{\alpha}=w_{\alpha} + e_{\alpha}$. Moreover, we have $W\oplus y_{\alpha}R = W\oplus e_{\alpha}R$.
Now maximality of $\Lambda$ implies that $e_{\alpha}R$ is not a direct summand of $M$. Since $e_{\alpha}{\mathfrak{m}}^n=0$, we have $e_{\alpha}R \ll M$
by \cite[Lemma 3.9]{Tribak}. Hence $\sum_{\alpha \in \Omega} e_{\alpha}R \ll M$ by \cite[Lemma 3.12]{Tribak}.
Note that $V=W \oplus \left(\sum_{\alpha \in \Omega}e_{\alpha}R\right)$.
Then $$N=D \oplus Y = D \oplus U \oplus V = D \oplus U \oplus W \oplus \left(\sum_{\alpha \in \Omega}e_{\alpha}R\right).$$
As $D$ is injective, we have $D \subseteq d(W) \oplus d(E)$, where $d(Z)$ denotes the largest divisible submodule of a module $Z$. Since $W{\mathfrak{m}}^n=0$, we have $d(W)=0$.
Therefore $D \subseteq d(E) \subseteq E$. This implies that $D \oplus W$ is a direct summand of $M$.
Further, we have $U \oplus (\oplus_{\alpha \in \Omega}e_{\alpha}R) \ll M$. It follows that $N$ lies above a direct summand of $M$.
\end{proof}

\begin{prop} \label{D1-D2-B(n)} Let $M=D_1 \oplus D_2 \oplus L$ be such that $D_1 \cong K^{(I_1)}$, $D_2 \cong Q^{(I_2)}$ and $L \cong B(n)$ for some positive integer $n$. Then $M$ is a dual self-CS-Baer module.
\end{prop}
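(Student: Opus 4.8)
The plan is to reduce everything to Lemma~\ref{main-lemma}. Set $D_0=D_1\oplus D_2$, so that $M=D_0\oplus L$. Since $D_1\cong K^{(I_1)}$ and $D_2\cong Q^{(I_2)}$ are divisible while $L\cong B(n)$ is reduced and annihilated by $\mathfrak{m}^n$, the submodule $D_0$ is precisely the maximal divisible submodule $d(M)$ of $M$; and over the Dedekind domain $R$ divisibility coincides with injectivity, so $D_0$ is injective. To show that $M$ is dual self-CS-Baer, I would fix an arbitrary family $(f_\lambda)_{\lambda\in\Lambda}$ of endomorphisms of $M$, put $N=\sum_{\lambda\in\Lambda}\Im(f_\lambda)$, and aim to write $N=D+X$ with $D$ an injective submodule of $M$ and $X\mathfrak{m}^n=0$, so that Lemma~\ref{main-lemma} applies verbatim.

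For the decomposition of $N$, I would first note that $\Im(f_\lambda)=f_\lambda(D_0)+f_\lambda(L)$ for each $\lambda$, since $M=D_0+L$. The image $f_\lambda(D_0)$ is a homomorphic image of a divisible module, hence divisible, hence contained in $d(M)=D_0$; thus $f_\lambda(D_0)\subseteq D_0$. On the other hand $f_\lambda(L)\mathfrak{m}^n=f_\lambda(L\mathfrak{m}^n)=0$. Setting $D=\sum_{\lambda\in\Lambda}f_\lambda(D_0)$ and $X=\sum_{\lambda\in\Lambda}f_\lambda(L)$, I obtain $N=D+X$, where $D\subseteq D_0$ is a sum of divisible submodules and hence divisible, therefore injective, while $X\mathfrak{m}^n=0$. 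Lemma~\ref{main-lemma} then gives that $N$ lies above a direct summand of $M$, and as the family $(f_\lambda)_{\lambda\in\Lambda}$ was arbitrary, $M$ is dual self-CS-Baer.

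Essentially all the genuine difficulty has already been discharged inside Lemma~\ref{main-lemma} (the local direct summand and locally-semi-$T$-nilpotency machinery), so at this level the argument is short. The only points needing care are structural facts about modules over $R$: that $D_0$ is the \emph{maximal} divisible submodule and is therefore mapped into itself by every endomorphism, and that divisibility and injectivity agree over a Dedekind domain. These are standard, and once they are invoked the verification that $N$ has the shape demanded by Lemma~\ref{main-lemma} is immediate.
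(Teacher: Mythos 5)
Your proposal is correct and follows essentially the same route as the paper: decompose $\sum_\lambda\Im(f_\lambda)$ as $D+X$ with $D=\sum_\lambda f_\lambda(D_1\oplus D_2)$ injective (divisible) and $X=\sum_\lambda f_\lambda(L)$ annihilated by $\mathfrak{m}^n$, then invoke Lemma~\ref{main-lemma}. The extra observation that $D_1\oplus D_2=d(M)$ is harmless but not needed, since the lemma only requires $D$ to be an injective submodule of $M$.
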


\begin{proof} Let $(\varphi_{\lambda})_{\lambda\in\Lambda}$ be a family of endomorphisms of $M$. For each $\lambda \in \Lambda$,
$\Im (\varphi_{\lambda})=D_{\lambda}+L_{\lambda}$, where $D_{\lambda}=\varphi_{\lambda}(D_1 \oplus D_2)$ and $N_{\lambda}=\varphi_{\lambda}(L)$.
Therefore $\sum_{\lambda \in \Lambda} \Im (\varphi_{\lambda}) = D+N$, where $D=\sum_{\lambda \in \Lambda}D_{\lambda}$
and $N=\sum_{\lambda \in \Lambda}N_{\lambda}$. Note that $D$ is injective as $R$ is a Dedekind domain.
Moreover, we have $N_{\lambda}{\mathfrak{m}}^n=0$ for all $\lambda \in \Lambda$.
Hence $N{\mathfrak{m}}^n=0$. From Lemma \ref{main-lemma}, we infer that $\sum_{\lambda \in \Lambda} \Im (\varphi_{\lambda})$ lies above a direct summand of $M$. It follows that $M$ is dual self-CS-Baer.
\end{proof}

\begin{theo} \label{structure-DVR} Let $R$ be a discrete valuation ring with maximal ideal $\mathfrak{m}$, quotient field $K$ and $Q=K/R$. Let $I_1$ and $I_2$ be two index sets and let $a$, $b$, $c$ and $n$ be non-negative integers. Then an $R$-module $M$ is dual self-CS-Baer if and only if $M$ is isomorphic to one of the following modules:

{\em (1)} $K^a \oplus Q^b \oplus R^c$ with $a \leq 1$ if $R$ is incomplete, or

{\em (2)} $K^{(I_1)} \oplus Q^{(I_2)} \oplus B(n)$, or

{\em (3)} $K^{(I_1)} \oplus B(n, n+1)$.
\end{theo}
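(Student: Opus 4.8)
The plan is to prove the two implications separately, using the structure theory of modules over a discrete valuation ring together with the reductions already available in this section. Throughout I would keep in mind that a dual self-CS-Baer module is dual self-CS-Rickart, so one may draw on the structural facts for the latter established in \cite{Tribak}, as well as on the fact (Remark \ref{sum-summands}(i)) that such a module is LSSSP and on the fact (Corollary \ref{c:sdb5}) that its direct summands inherit the property.

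For \emph{sufficiency} I would dispose of the three families in turn. Modules of type (2) are exactly Proposition \ref{D1-D2-B(n)}, allowing $B(n)=0$; this also covers the purely divisible subcase $c=0$ of (1), since $K^a\oplus Q^b$ is injective, hence dual self-CS-Baer by Example \ref{injective}. For type (1) with $c\geq 1$ the module has a summand isomorphic to $R$, so Proposition \ref{lifting} reduces the claim to showing that the finite sum $K^a\oplus Q^b\oplus R^c$ is lifting; as $K$, $Q$ and $R$ are each hollow over a discrete valuation ring, this is a finite-rank lifting question in which the hypothesis $a\leq 1$ in the incomplete case is precisely what makes the relevant extensions split (the classical dichotomy governed by $\Ext^1_R(K,R)$). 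The remaining, most technical, sufficiency step is type (3): I would prove a companion to Lemma \ref{main-lemma} asserting that in $K^{(I_1)}\oplus B(n,n+1)$ every submodule of the form $D+X$ with $D$ injective and $X\mathfrak{m}^{n+1}=0$ lies above a direct summand. The proof would mirror Lemma \ref{main-lemma}: decompose the bounded part $X$ into cyclics, split off the superfluous ones, and use that the ambient decomposition into copies of $K$, $R/\mathfrak{m}^n$ and $R/\mathfrak{m}^{n+1}$ complements maximal direct summands, turning a maximal local direct summand into a genuine one via the locally-semi-$T$-nilpotency results cited there. One then argues, as in Proposition \ref{D1-D2-B(n)}, that the sum of the images of any family of endomorphisms has this shape.

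For \emph{necessity} I would first split off the injective (equivalently divisible) part, writing $M=D\oplus C$ with $D\cong K^{(I_1)}\oplus Q^{(I_2)}$ and $C$ reduced; by Corollary \ref{c:sdb5} both summands are dual self-CS-Baer. The argument then branches according to whether $M$ has a direct summand isomorphic to $R$. If it does, Proposition \ref{lifting} makes $M$ lifting, and a lifting module over the local ring $R$ with a free summand is forced to be of finite length apart from its divisible part, so that $a$, $b$, $c$ are finite and $M$ falls into family (1); the completeness condition $a\leq 1$ then drops out of the same $\Ext^1_R(K,R)$ analysis as in sufficiency. If $M$ has no free summand, I would show, using that $C$ is reduced, LSSSP and (via Theorem \ref{t:pstr4}) controllable summand by summand, that its reduced torsion-free part must vanish and that its torsion part is a bounded direct sum of cyclics $\bigoplus R/\mathfrak{m}^{n_i}$. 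The crucial numerical constraint is that such a direct sum can be lifting only when the exponents differ by at most one, which leaves exactly a single value $n$ (family (2), where $Q$ may be present) or two consecutive values $n,n+1$; in the latter case a separate compatibility check shows that $Q$ cannot also occur, yielding family (3).

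The step I expect to be the main obstacle is this last case analysis in the necessity direction: isolating the exact role of completeness, proving that consecutive exponents are both necessary and the only possibility, and ruling out the coexistence of $Q$ with two distinct torsion exponents. These rest on the delicate behaviour of local direct summands and locally-semi-$T$-nilpotent families over the discrete valuation ring, and on the $\Ext^1_R(K,R)$ computation distinguishing the complete from the incomplete case; the type-(3) sufficiency lemma is the secondary technical hurdle, since extending Lemma \ref{main-lemma} from one exponent to two consecutive ones requires handling cyclic summands of two isomorphism types simultaneously.
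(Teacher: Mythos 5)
Your sufficiency argument is essentially the paper's for families (1) and (2): injectivity plus Example \ref{injective}, Proposition \ref{D1-D2-B(n)}, and Proposition \ref{lifting} combined with the known classification of lifting modules over a discrete valuation ring (the paper simply cites \cite[Proposition A.7]{MM} rather than redoing an $\Ext^1_R(K,R)$ analysis). For family (3), however, you propose a companion to Lemma \ref{main-lemma} with two consecutive exponents, which is considerably more work than needed: since $\Hom_R(K^{(I_1)},B(n,n+1))=\Hom_R(B(n,n+1),K^{(I_1)})=0$, the paper just observes that $B(n,n+1)$ is lifting by \cite[Proposition A.7]{MM} and $K^{(I_1)}$ is injective (hence dual self-CS-Baer by Example \ref{injective}), and invokes Theorem \ref{f-invariant-summands} on finite direct sums with vanishing cross-homomorphisms. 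Your proposed lemma would also have to re-justify the superfluous-or-summand dichotomy for cyclic submodules of two different orders, which is not literally covered by the results of \cite{Tribak} cited in Lemma \ref{main-lemma}.

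The genuine gap is in your necessity direction. The paper's proof rests on a single observation you do not make: every dual self-CS-Baer module is dual self-CS-Rickart, and \cite[Theorem 3.14]{Tribak} already classifies the dual self-CS-Rickart modules over a discrete valuation ring as exactly the three candidate families (with a finite free part in the first). Hence necessity reduces to deciding, within each of those families, which members are dual self-CS-Baer --- which is precisely what the three cases of the paper's proof do. You instead set out to re-derive the whole classification (divisible/reduced splitting, vanishing of the reduced torsion-free part in the absence of a free summand, boundedness of the torsion part, the consecutive-exponent constraint, the exclusion of $Q$ alongside two distinct exponents, the completeness dichotomy), and you explicitly leave these steps as ``obstacles'' rather than proving them. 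Several are not routine: for instance, when $M$ has no summand isomorphic to $R$ you cannot invoke Proposition \ref{lifting}, so your appeal to ``such a direct sum can be lifting only when the exponents differ by at most one'' is unsupported --- dual self-CS-Baer does not imply lifting in that situation, and the constraint really comes from the dual self-CS-Rickart classification. As written, the necessity half is a plan whose hardest ingredients are missing; either import \cite[Theorem 3.14]{Tribak} as the paper does, or supply full proofs of those structural facts.
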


\begin{proof} Using the fact that every dual self-CS-Baer module is dual self-CS-Rickart, it suffices to characterize when the modules given in \cite[Theorem 3.14]{Tribak} are dual self-CS-Baer. Let $M$ be an $R$-module.

{\bf Case 1:} Suppose that $M \cong K^{({\Lambda_1})} \oplus Q^{({\Lambda_2})} \oplus R^n$, where ${\Lambda_1}$ and ${\Lambda_2}$ are index sets
and $n$ is a non-negative integer. By Example \ref{injective}, $K^{({\Lambda_1})} \oplus Q^{({\Lambda_2})}$ is dual CS-Baer.
Now assume that $n \neq 0$. Then $M$ is dual self-CS-Baer if and only if $M$ is lifting by Proposition \ref{lifting}.
This is equivalent to $M \cong K^{a} \oplus Q^{b} \oplus R^n$ with $a \leq 1$ if $R$ is incomplete by \cite[Proposition A.7]{MM}.

{\bf Case 2:} Suppose that $M \cong K^{({\Lambda_1})} \oplus Q^{({\Lambda_2})} \oplus B(n)$, where ${\Lambda_1}$ and ${\Lambda_2}$
are index sets and $n$ is a non-negative integer. Then $M$ is dual self-CS-Baer by Proposition \ref{D1-D2-B(n)}.

{\bf Case 3:} Suppose that $M = D \oplus L$, where $D \cong K^{({\Lambda})}$ and $L \cong B(n, n+1)$, where ${\Lambda}$ is an index set
and $n$ is a non-negative integer. Note that $\Hom_R(D, L)=\Hom_R(L, D)=0$. By \cite[Proposition A.7]{MM}, $B(n, n+1)$ is a lifting module.
Moreover, $D$ is dual self-CS-Baer, since $R$ is a Dedekind domain (see Example \ref{injective}). Hence $M$ is dual self-CS-Baer by Theorem \ref{f-invariant-summands}. This completes the proof.
\end{proof}

Let $R$ be a Dedekind domain with quotient field $K$, and let $\mathfrak{p}$ be a non-zero prime ideal of $R$. By $B_{\mathfrak{p}}(n_1, \ldots, n_s)$
we denote the direct sum of arbitrarily many copies of $R/\mathfrak{p}^{n_1}$, $R/\mathfrak{p}^{n_2}$, $\ldots$, $R/\mathfrak{p}^{n_s}$ for some non-negative integers $n_1$, $n_2$, $\ldots$, $n_s$. We will denote by $R(\mathfrak{p}^{\infty})$ the $\mathfrak{p}$-primary component of the torsion $R$-module $K/R$.
Combining Theorems \ref{structure-Dedekind} and \ref{structure-DVR}, we obtain the following structure result.

\begin{theo} \label{structure-Dedekind-2} Let $R$ be a non-local Dedekind domain with quotient field $K$ and let $M$ be an $R$-module.
Then the following are equivalent:
\begin{enumerate}
\item[{\em (i)}] $M$ is a dual self-CS-Baer module;

\item[{\em (ii)}] $M=(\bigoplus_{\mathfrak{p} \in \mathbf{P}} T_{\mathfrak{p}}(M)) \oplus L$ such that $L \cong K^{({\Lambda})}$ for some index set ${\Lambda}$ and for every non-zero prime ideal $\mathfrak{p}$
of $R$, $T_{\mathfrak{p}}(M) \cong R(\mathfrak{p}^{\infty})^{(\Lambda)} \oplus B_{\mathfrak{p}}(n)$ or $T_{\mathfrak{p}}(M) \cong B_{\mathfrak{p}}(n, n+1)$, where ${\Lambda}$ is an index set and $n$ is a non-negative integer.
\end{enumerate}
\end{theo}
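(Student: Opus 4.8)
The plan is to derive this result directly by splicing together the two structure theorems already established, Theorem \ref{structure-Dedekind} for non-local Dedekind domains and Theorem \ref{structure-DVR} for discrete valuation rings. First I would invoke Theorem \ref{structure-Dedekind}: statement (i) is equivalent to having $M=\left(\bigoplus_{\mathfrak{p}\in\mathbf{P}}T_{\mathfrak{p}}(M)\right)\oplus L$ with $L\cong K^{(\Lambda)}$ for some index set $\Lambda$, and each $\mathfrak{p}$-primary component $T_{\mathfrak{p}}(M)$ a dual self-CS-Baer $R_{\mathfrak{p}}$-module. Since $R$ is a Dedekind domain and $\mathfrak{p}$ is a non-zero prime ideal, the localization $R_{\mathfrak{p}}$ is a discrete valuation ring, so the remaining task is to determine exactly which modules can occur as the torsion $R_{\mathfrak{p}}$-module $T_{\mathfrak{p}}(M)$.

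To this end I would apply Theorem \ref{structure-DVR} to the DVR $R_{\mathfrak{p}}$, whose quotient field is again $K$. The key extra input is that $T_{\mathfrak{p}}(M)$ is a $\mathfrak{p}$-primary, hence torsion, $R_{\mathfrak{p}}$-module. Because $K$ and $R_{\mathfrak{p}}$ are torsion-free, any direct summand isomorphic to a power of $K$ or to a free power of $R_{\mathfrak{p}}$ appearing in the classification (1)--(3) of Theorem \ref{structure-DVR} must be zero. Consequently case (1) collapses to $Q^{b}$, which is the special instance $Q^{(I_2)}\oplus B(0)$ of case (2) since $B(0)=0$; case (2) itself reduces to $Q^{(I_2)}\oplus B(n)$; and case (3) reduces to $B(n,n+1)$. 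Hence the only torsion dual self-CS-Baer $R_{\mathfrak{p}}$-modules are $Q^{(I_2)}\oplus B(n)$ and $B(n,n+1)$.

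Finally I would translate these two families back into the intrinsic notation over $R$. Over the DVR $R_{\mathfrak{p}}$ one has the standard isomorphisms $Q=K/R_{\mathfrak{p}}\cong R(\mathfrak{p}^{\infty})$, the $\mathfrak{p}$-primary component of $K/R$, and $R_{\mathfrak{p}}/\mathfrak{m}^{k}\cong R/\mathfrak{p}^{k}$, so that the modules $B(n)$ and $B(n,n+1)$ formed over $R_{\mathfrak{p}}$ coincide with $B_{\mathfrak{p}}(n)$ and $B_{\mathfrak{p}}(n,n+1)$. Substituting these identifications yields precisely $T_{\mathfrak{p}}(M)\cong R(\mathfrak{p}^{\infty})^{(\Lambda)}\oplus B_{\mathfrak{p}}(n)$ or $T_{\mathfrak{p}}(M)\cong B_{\mathfrak{p}}(n,n+1)$, which together with $L\cong K^{(\Lambda)}$ is exactly the description in (ii); the converse direction follows by reading the same equivalences backwards.

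I expect the only delicate points to be purely bookkeeping rather than conceptual: confirming the standard module isomorphisms $K/R_{\mathfrak{p}}\cong R(\mathfrak{p}^{\infty})$ and $R_{\mathfrak{p}}/\mathfrak{m}^{k}\cong R/\mathfrak{p}^{k}$, and checking carefully that the torsion hypothesis forces every torsion-free summand in Theorem \ref{structure-DVR} to vanish so that cases (1)--(3) genuinely collapse to the two stated forms. No argument beyond the two cited theorems is needed, and in particular the equivalence (ii)$\Leftrightarrow$(iii) inside Theorem \ref{structure-Dedekind} already handles the passage between $R$- and $R_{\mathfrak{p}}$-module structures on each primary component.
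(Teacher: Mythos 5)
Your proposal is correct and follows exactly the route the paper takes: the paper derives this theorem in one line by combining Theorems \ref{structure-Dedekind} and \ref{structure-DVR}, and your argument simply makes explicit the bookkeeping (torsion forces the $K$- and $R$-summands in the DVR classification to vanish, and $K/R_{\mathfrak{p}}\cong R(\mathfrak{p}^{\infty})$, $R_{\mathfrak{p}}/\mathfrak{m}^{k}\cong R/\mathfrak{p}^{k}$) that the paper leaves to the reader.
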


Next, we exhibit some examples of dual self-CS-Rickart modules which are not dual self-CS-Baer.

\begin{ex} \label{e:dual} \rm (i) Consider the $\mathbb{Z}$-module $M = \prod_{p\; {\rm prime}} \mathbb{Z}_p$. Since $\End_{\mathbb{Z}}(M)$ is isomorphic to the ring $\prod_{p\; {\rm prime}} \mathbb{Z}_p$, $\End_{\mathbb{Z}}(M)$ is a von Neumann regular ring. Hence $M$ is a dual self-CS-Rickart module (see \cite[Example 4.5]{Tribak}). On the other hand, it is easily seen that $T(M)=\bigoplus_{p\; {\rm prime}} \mathbb{Z}_p$ is not a direct summand of $M$. Therefore, $M$ is not dual self-CS-Baer by Theorem \ref{structure-Dedekind}.

(ii) Let $R$ be a discrete valuation ring with quotient field $K$. Comparing \cite[Theorem 3.14]{Tribak} with Theorem \ref{structure-DVR}, we obtain many examples of dual self-CS-Rickart $R$-modules which are not dual self-CS-Baer. For example, for any positive integer $n$, the $R$-modules
$K^{(\mathbb{N})} \oplus R^n$, $(K/R)^{(\mathbb{N})} \oplus R^n$ and $K^{(\mathbb{N})} \oplus (K/R)^{(\mathbb{N})} \oplus R^n$ are dual self-CS-Rickart, but they are not dual self-CS-Baer.
\end{ex}

\begin{rem} \rm Let $M$ be a dual strongly self-CS-Baer module. Let $N$ be a direct summand of $M$ and let $\pi:M \rightarrow N$ be the projection map.
Then $\Im (\pi) = N$ lies above a fully invariant direct summand of $M$. Therefore $N$ is fully invariant in $M$.
This implies that $M$ cannot have a direct summand isomorphic to $N \oplus N$.
\end{rem}

Combining Theorems \ref{f-invariant-summands} and \ref{t:pstr4}, Theorems \ref{structure-DVR} and \ref{structure-Dedekind-2},  and the preceding remark, we get the following corollary.

\begin{coll} \label{strong-d-Baer} Let $R$ be a Dedekind domain with quotient field $K$ and let $M$ be an $R$-module.
\begin{enumerate}
\item[{\em (i)}] If $R$ is a discrete valuation ring, then $M$ is dual strongly self-CS-Baer if and only if $M \cong R$ or $M \cong K$ or $M \cong K/R$ or $M \cong R/\mathfrak{m}^n$ or $M \cong K \oplus R/\mathfrak{m}^n$ for some non-negative integer $n$.
\item[{\em (ii)}] If $R$ is not local, then $M$ is dual strongly self-CS-Baer if and only if one of the following conditions holds:
\begin{enumerate}
\item[{\em (a)}] $M=\left(\bigoplus_{\mathfrak{p} \in \mathbf{P}} T_{\mathfrak{p}}(M)\right) \oplus L$ with $L \cong K$ and for every non-zero prime ideal $\mathfrak{p}$ of $R$, there exists a non-negative integer $n_p$ depending on $\mathfrak{p}$ such that $T_{\mathfrak{p}}(M) \cong R/{\mathfrak{p}}^{n_p}$.

\item[{\em (b)}] $M=\bigoplus_{\mathfrak{p} \in \mathbf{P}} T_{\mathfrak{p}}(M)$ such that for every non-zero prime ideal $\mathfrak{p}$ of $R$,
$T_{\mathfrak{p}}(M) \cong R(\mathfrak{p}^{\infty})$ or $T_{\mathfrak{p}}(M) \cong R/{\mathfrak{p}}^{n_p}$ for some non-negative integer $n_p$ depending on $\mathfrak{p}$.
 \end{enumerate}
\end{enumerate}
\end{coll}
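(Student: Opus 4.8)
The plan is to reduce everything to the structure theorems already in hand and then layer the weak duo condition on top. By Corollary \ref{st00}(2), a module $M$ is dual strongly self-CS-Baer if and only if it is dual self-CS-Baer \emph{and} weak duo (equivalently, $\End_R(M)$ is abelian). So the strategy is: start from the classification of dual self-CS-Baer modules supplied by Theorem \ref{structure-DVR} in the discrete valuation case and by Theorem \ref{structure-Dedekind-2} in the non-local case, and then decide exactly which of those modules are weak duo.

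For the forward implication the preceding remark is the key filter: if $M$ is dual strongly self-CS-Baer then every direct summand is fully invariant, so $M$ admits no direct summand isomorphic to $N \oplus N$, which already forbids repeated indecomposable summands. First I would record the $\Hom$-vanishing relations among the indecomposable building blocks. Over the discrete valuation ring these are $K$, $Q = K/R$ and the bounded cyclics $R/\mathfrak{m}^n$; the nonzero maps are $R/\mathfrak{m}^n \to Q$, the maps $R/\mathfrak{m}^n \to R/\mathfrak{m}^{n+1}$ (and back) and $K \to Q$, whereas $\Hom_R(K, R/\mathfrak{m}^n) = \Hom_R(R/\mathfrak{m}^n, K) = 0$ because $K$ is divisible torsion-free and $R/\mathfrak{m}^n$ is bounded torsion. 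Since weak duo forces every summand to be fully invariant, two distinct block types with a nonzero map between them cannot coexist. Running this through the three families of Theorem \ref{structure-DVR} collapses each to a single block, except that $K$ and a single $R/\mathfrak{m}^n$ may coexist, yielding exactly the list $R$, $K$, $K/R$, $R/\mathfrak{m}^n$, $K \oplus R/\mathfrak{m}^n$ of part (i). The same bookkeeping on Theorem \ref{structure-Dedekind-2} gives part (ii): within a primary component $R(\mathfrak{p}^{\infty})$ and $R/\mathfrak{p}^n$ cannot coexist (the embedding $R/\mathfrak{p}^n \hookrightarrow R(\mathfrak{p}^{\infty})$ is nonzero), and the torsion-free part $K$ and any Pr\"ufer block $R(\mathfrak{p}^{\infty})$ cannot coexist (the composite $K \to K/R \to R(\mathfrak{p}^{\infty})$ is nonzero), so each $T_{\mathfrak{p}}(M)$ must be a single $R(\mathfrak{p}^{\infty})$ or a single $R/\mathfrak{p}^{n_{\mathfrak{p}}}$ and $L$ is $K$ or $0$; these split into cases (a) and (b) according to whether $K$ is present.

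For the converse, each surviving block is an indecomposable dual self-CS-Baer module, hence dual strongly self-CS-Baer by Corollary \ref{st01}, and across distinct blocks all cross-homomorphisms vanish by the computations above. I would first assemble the (possibly infinite) torsion part $\bigoplus_{\mathfrak{p}} T_{\mathfrak{p}}(M)$ via Theorem \ref{t:pstr4}(2), whose hypothesis holds because for a torsion module over a Dedekind domain every submodule $N$ satisfies $N = \bigoplus_{\mathfrak{p}}(N \cap T_{\mathfrak{p}}(M))$, exactly as used in the proof of Theorem \ref{structure-Dedekind}. The torsion-free summand $K$ arising in part (i) and in case (a) is then attached by the finite (two-block) case of Theorem \ref{f-invariant-summands}(2), since $\Hom_R$ between $K$ and the bounded torsion part vanishes in both directions. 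One could equally observe that these vanishing cross-homomorphisms give $\End_R(M) \cong \prod_i \End_R(M_i)$, a product of local rings and hence abelian, and invoke Corollary \ref{st00} directly.

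The main obstacle is the $\Hom$-vanishing analysis between the building blocks: everything hinges on correctly identifying which pairs of indecomposables admit a nonzero map (so cannot lie in a common weak duo module) and which do not, in particular the asymmetric fact that $K$ pairs harmlessly with the bounded cyclics $R/\mathfrak{m}^n$ but not with the Pr\"ufer module $Q = R(\mathfrak{m}^{\infty})$. Getting these relations exactly right, and verifying that the no-repeated-summand constraint from the remark together with them accounts for every module appearing in the structure theorems, is where the real work lies.
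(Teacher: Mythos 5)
Your proposal is correct and follows essentially the same route as the paper, whose proof is simply the instruction to combine Theorems \ref{f-invariant-summands} and \ref{t:pstr4}, Theorems \ref{structure-DVR} and \ref{structure-Dedekind-2}, and the preceding remark on the impossibility of a summand of the form $N\oplus N$. Your write-up just makes explicit the $\Hom$-vanishing bookkeeping among the blocks $K$, $R(\mathfrak{p}^{\infty})$ and $R/\mathfrak{p}^{n}$ that the paper leaves to the reader, and those computations are all accurate.
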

 
\begin{ex} \rm Let $R$ be a Dedekind domain with quotient field $K$. Comparing Theorem \ref{structure-Dedekind-2} and Corollary \ref{strong-d-Baer}, we see that $K^{(\mathbb{N})}$ is dual self-CS-Baer, but not dual strongly self-CS-Baer.
\end{ex}

\end{document}